\documentclass[11pt]{article}
\usepackage[letterpaper,margin=1in]{geometry}
\usepackage[T1]{fontenc}
\usepackage{lmodern,amsmath,amssymb,amsthm,mathtools,mathrsfs,microtype}
\usepackage{enumitem,booktabs,array,aliascnt}
\usepackage{tikz,caption,needspace,placeins}
\usetikzlibrary{arrows.meta,calc,decorations.pathreplacing}
\usepackage[hidelinks]{hyperref}
\usepackage[nameinlink,capitalize]{cleveref}
\hypersetup{pdftitle={A sharp two-disk bound for the second positive Neumann eigenvalue under a curvature upper bound},pdfauthor={Meiqi Liu, Zhouyu Long, and Wenming Zou},pdfsubject={Neumann eigenvalues, curvature upper bounds, and sharp spectral comparison}}
\newtheorem{theorem}{Theorem}[section]
\newaliascnt{lemma}{theorem}\newtheorem{lemma}[lemma]{Lemma}\aliascntresetthe{lemma}
\crefname{lemma}{lemma}{lemmas}\Crefname{lemma}{Lemma}{Lemmas}
\newaliascnt{proposition}{theorem}\newtheorem{proposition}[proposition]{Proposition}\aliascntresetthe{proposition}
\crefname{proposition}{proposition}{propositions}\Crefname{proposition}{Proposition}{Propositions}
\newaliascnt{corollary}{theorem}\newtheorem{corollary}[corollary]{Corollary}\aliascntresetthe{corollary}
\crefname{corollary}{corollary}{corollaries}\Crefname{corollary}{Corollary}{Corollaries}
\theoremstyle{definition}
\newaliascnt{definition}{theorem}\aliascntresetthe{definition}
\newaliascnt{remark}{theorem}\newtheorem{remark}[remark]{Remark}\aliascntresetthe{remark}
\crefname{remark}{remark}{remarks}\Crefname{remark}{Remark}{Remarks}
\newcommand{\D}{\mathbb D}\newcommand{\R}{\mathbb R}\newcommand{\C}{\mathbb C}

\newcommand{\Span}{\operatorname{span}}

\numberwithin{equation}{section}
\title{\bf A sharp two-disk bound for the second positive Neumann
eigenvalue under a curvature upper bound}

\author{{\bf Meiqi Liu}\thanks{\raggedright School of Mathematical Sciences, Hangzhou Dianzi University, Hangzhou 310018, China. E-Mail: \href{mailto:liumq@hdu.edu.cn}{\nolinkurl{liumq@hdu.edu.cn}}},
\quad {\bf Zhouyu Long}\thanks{\raggedright Department of Mathematical Sciences, Tsinghua University, Beijing 100084, China.  E-Mail: \href{mailto:wuqr24@mails.tsinghua.edu.cn}{\nolinkurl{wuqr24@mails.tsinghua.edu.cn}}}, \quad  {\bf Wenming Zou}\thanks{\raggedright Department of Mathematical Sciences, Tsinghua University, Beijing 100084, China.  E-Mail:  \href{mailto:zou-wm@mail.tsinghua.edu.cn}{\nolinkurl{zou-wm@mail.tsinghua.edu.cn}} }}

\date{ }

\begin{document}
\maketitle

\begin{abstract}
Langford and Laugesen conjectured a sharp two-disk bound for the third Neumann eigenvalue under an upper Gaussian-curvature bound \emph{(Math. Ann. \textbf{386} (2023), 2255--2281, Conjecture~1.4)}. We prove the conjectured bound for bounded Lipschitz membranes with the weight regularity used by Langford and Laugesen, and strengthen it to a sharp reciprocal inequality. Let $\Omega\subset\mathbb C$ be a bounded simply connected Lipschitz domain, let $\omega\in C^2(\Omega)\cap C(\overline\Omega)$ be positive on $\overline\Omega$, and equip $\Omega$ with $g=\omega|dz|^2$. Suppose $K_g\le K$, $A=\int_\Omega\omega\,dx>0$, and $KA<4\pi$ when $K>0$. Enumerate the Neumann eigenvalues, counting multiplicity, by $0=\lambda_0<\lambda_1\le\lambda_2\le\cdots$. If $D_K(A/2)$ is the constant-curvature geodesic disk of area $A/2$, then
\[
 \frac1{\lambda_2(\Omega,g)}+\frac1{\lambda_3(\Omega,g)}
 >\frac{2}{\lambda_1(D_K(A/2))},
 \qquad
 \lambda_2(\Omega,g)<\lambda_1(D_K(A/2)).
\]
No simplicity of $\lambda_1$ or boundary differentiability of $\omega$ is required. The same conclusions hold for relatively compact disk-type Lipschitz domains in smooth Riemannian surfaces. Both bounds are sharp at fixed area and curvature upper bound: for each admissible $K,A$, a sequence of smooth connected domains of area $A$ in the constant-curvature model has fixed-index Neumann spectra converging to those of $D_K(A/2)\sqcup D_K(A/2)$. Neither extremal value is attained in either connected class. The proof uses two-pole Green coordinates, a positive-kernel comparison, simultaneous centering of two complex moments, and a shifted reciprocal variational estimate.

\vskip0.1in
\noindent\textit{\bf Keywords.} Neumann eigenvalues; curvature upper bound; Green functions; conformal weights; sharp spectral inequalities.
\vskip0.1in
\noindent\text{\bf Mathematics Subject Classification:} 35P15; 58J50
\end{abstract}

\newpage

\section{Introduction and main results}\label{sec:scope}
The maximization of low Neumann eigenvalues under an area or volume constraint leads to two different model geometries. For the first positive eigenvalue, Szeg\H{o}~\cite{Szego} proved the disk bound for simply connected planar domains, and Weinberger~\cite{Weinberger} extended it to arbitrary dimension without that topological restriction. On space forms, sharp analogues under the corresponding geometric restrictions are due to Ashbaugh--Benguria~\cite{AshbaughBenguria1995}, with earlier spherical-domain bounds under stronger radius assumptions in Chavel~\cite{Chavel1980}. Aithal--Santhanam~\cite{AithalSanthanam1996} treated rank-one symmetric spaces. For variable-curvature manifolds, comparison results under nonpositive curvature appear in Xu~\cite{Xu1995}, and upper bounds under additional curvature hypotheses in Wang~\cite{Wang2019} and Verma~\cite{Verma2020}. The work of Fall--Weth~\cite{FallWeth2014} concerns sharp local refinements in the small-volume regime.

For the second positive eigenvalue, Girouard--Nadirashvili--Polterovich~\cite{GNP} proved the sharp two-disk bound for simply connected planar domains. Girouard--Polterovich~\cite[Theorems 1.2.7 and 1.4.2]{GP} established strictness for homogeneous domains and the non-strict extension to positive conformal densities $\rho\in C^2(\overline\Omega)$ with $\Delta\log\rho\ge0$. Bucur--Henrot~\cite{BH} proved the Euclidean two-ball bound in every dimension, without assuming simple connectivity. Freitas--Laugesen~\cite{FL} and Bucur--Martinet--Nahon~\cite{BMN} obtained the hyperbolic and spherical counterparts, respectively.

A separate line concerns surfaces with only a Gaussian curvature upper bound. Bandle~\cite{Bandle1972} proved the same-area, first-positive disk comparison, with the restriction $KA\le2\pi$ for positive curvature bound $K$ and area $A$; for $K\le0$ this imposes no area restriction. Langford--Laugesen~\cite[Theorem~1.1]{LL} extended the spherical first-positive range beyond a hemisphere, reaching area $16/17$ of the sphere. Provenzano--Savo~\cite[Theorem~1.3]{PS} later obtained the first-positive comparison for simply connected curvature-bounded surfaces throughout the full range $4\pi-KA\ge0$.

Langford--Laugesen formulated the corresponding problem for the second positive Neumann eigenvalue in \cite[Conjecture~1.4]{LL}. In their indexing $0=\mu_1<\mu_2\le\mu_3\le\cdots$, they conjectured that if a simply connected Lipschitz surface has curvature bounded above by $K$ and satisfies $K|\Omega|<4\pi$, then
\[
 \mu_3(\Omega)<\mu_3(D_K\sqcup D_K)=\mu_2(D_K),
\]
where the two model disks have equal area $|\Omega|/2$, with the extremal value approached by a suitable degeneration to their disjoint union. See also Laugesen's open problem in Oberwolfach Report \cite[p.~2099]{OWR}. We establish this comparison and its shifted reciprocal strengthening.

\paragraph{Two settings for the same comparison.}
In the \emph{weighted setting (L)}, let $\Omega\subset\mathbb C$ be a bounded simply connected domain with Lipschitz boundary and let
\begin{equation}
 \omega\in C^2(\Omega)\cap C(\overline\Omega),
 \qquad \omega>0\ \hbox{on }\overline\Omega,
 \qquad g=\omega|dz|^2.
 \label{eq:LL-weight}
\end{equation}
On bounded membranes we use the weight regularity of~\cite[p.~2256]{LL}, given in \eqref{eq:LL-weight}. The boundary is Lipschitz, and the weight is continuous and positive up to it; no boundary differentiability or extension is assumed. The curvature bound is the classical interior inequality
\begin{equation}
 K_g=-\frac{\Delta\log\omega}{2\omega}\le K
 \quad\hbox{in }\Omega.
 \label{eq:LL-curvature}
\end{equation}

In the \emph{smooth geometric setting (G)}, let $\Omega\Subset(S,g)$ be a connected Lipschitz domain whose closure is homeomorphic to a closed disk, where $g$ is smooth and positive definite on a neighborhood of $\overline\Omega$.

In either setting the Neumann form has domain $H^1(\Omega)$ and is
\begin{equation}
 E(f,h)=\int_\Omega\langle\nabla f,\nabla h\rangle_g\,dv_g,
 \qquad M(f,h)=\int_\Omega fh\,dv_g.
 \label{eq:mass-energy-forms}
\end{equation}
For (L), $dv_g=\omega\,dx$ and $E(f,h)=\int_\Omega\nabla f\cdot\nabla h\,dx$. All $L^2$ inner products and orthogonality conditions on $\Omega$ use $dv_g$ unless another measure is specified. Write $E(f)=E(f,f)$ and $M(f)=M(f,f)$; for a complex function these are the sums for its two real components. The eigenvalues
\begin{equation}
 0=\lambda_0<\lambda_1\le\lambda_2\le\cdots
 \label{eq:indexing}
\end{equation}
are counted with multiplicities. On disconnected domains the same indexing includes every zero eigenvalue. In the complete simply connected model surface $\mathbb M_K$, write
\begin{equation}
 D_K(s)=\text{the geodesic disk of area }s,
 \qquad \beta_K(s)=\lambda_1(D_K(s)),
 \label{eq:modelnotation}
\end{equation}
with $0<s<4\pi/K$ when $K>0$.

\begin{theorem}[strict shifted reciprocal and two-disk comparisons]\label{thm:main}
In either setting (L) or (G), suppose
\begin{equation}
 K_g\le K\quad\hbox{on }\Omega,
 \qquad A=|\Omega|_g>0,
 \qquad KA<4\pi\quad\hbox{if }K>0.
 \label{eq:assumptions}
\end{equation}
Then
\begin{equation}
 \frac1{\lambda_2(\Omega,g)}+\frac1{\lambda_3(\Omega,g)}
 >\frac{2}{\beta_K(A/2)}.
 \label{eq:shifted-main}
\end{equation}
Consequently,
\begin{equation}
 \lambda_2(\Omega,g)<\beta_K(A/2).
 \label{eq:main}
\end{equation}
No simplicity assumption is imposed on $\lambda_1$. In setting (L), the weight assumptions are exactly \eqref{eq:LL-weight}; no boundary derivative of $\omega$ is assumed.
\end{theorem}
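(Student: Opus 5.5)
We prove \eqref{eq:shifted-main} by a Girouard--Nadirashvili--Polterovich type argument, adapted to the curvature-upper-bound setting through the Bandle/Provenzano--Savo comparison. The corollary \eqref{eq:main} then follows at once. Indeed, $\lambda_2\le\lambda_3$ gives $2/\lambda_2\ge 1/\lambda_2+1/\lambda_3>2/\beta_K(A/2)$.

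First, uniformize. In setting (G), the closure of $\Omega$ is a closed disk, so $\Omega$ is conformal to $\D$ with a conformal factor that is continuous up to the boundary. This reduces (G) to (L) with $\Omega=\D$, $g=\omega|dz|^2$ and $K_g\le K$. In setting (L) we compose with the Riemann map; Lipschitz regularity of the boundary keeps the Dirichlet energy and the mass integrals meaningful. The basic one-disk ingredient is a \emph{transplantation inequality}, the engine behind Bandle and Provenzano--Savo. Let $(D,\omega)$ be a simply connected subdomain with $K_g\le K$ and area $s$ below the admissible threshold. Let $\Phi$ be a Green-coordinate map to the model disk $D_K(s)$, sending level sets of the Green function with pole $p$ to geodesic circles of equal enclosed area. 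Then the folded model eigenfunction $u(z)=\phi_s(r)\,e^{i\theta}$, where $r$ is the Green-radius and $\theta$ is the harmonic conjugate angle, satisfies
\[
 E_D(u)\le \beta_K(s)\,M_D(u),
\]
with strict inequality unless the metric is isometric to the model. This rests on the isoperimetric-type inequality $L^2\ge 4\pi a-Ka^2$ for sublevel sets of the Green function (Alexandrov--Bol), combined with a positive-kernel comparison. Beyond the boundary of $D$ we extend $\phi_s$ as a constant, as in Weinberger's construction.

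Second, use \emph{two poles}. Following GNP, we work with two-pole Green coordinates: for points $p,q\in\D$, the function $G_p-G_q$ splits $\Omega$ along its zero level set into two pieces $\Omega_\pm$. We choose the poles (a hyperbolic-conformal parameter) so that the two pieces have equal area $A/2$. On each half we take the transplanted complex trial function $u_\pm=\phi_{A/2}(r_\pm)e^{i\theta_\pm}$, extended by zero across the interface. The real-valued functions obtained from the components of $u_+$ and $u_-$ form a four-dimensional family. We then impose orthogonality to $1$ and to the (possibly multiple) first eigenspace. The centering conditions are $\int u_\pm\,dv_g=0$, two complex moments, i.e.\ four real conditions, and we obtain them simultaneously by a degree/fixed-point argument in the pole parameters, as in the Hersch--Szeg\H{o} centering. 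This leaves a space large enough to dominate index $2$, with the leftover dimension giving index $3$.

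The \emph{shifted reciprocal} estimate comes from the following observation. On the span $V$ of the centered trial functions, orthogonal to constants, the reciprocal min-max principle (the Ky Fan type inequality for the trace of the inverse) gives
\[
 \frac1{\lambda_1}+\frac1{\lambda_2}+\frac1{\lambda_3}\ge \operatorname{tr}\bigl(M E^{-1}\bigr)\big|_V.
\]
Subtracting the contribution of one direction controlled by $1/\lambda_1$ is not legitimate in general. Instead, we choose $V$ to be $M$-orthogonal to a first eigenfunction. Then the Ky Fan inequality applied to the two-dimensional quotient yields $\frac1{\lambda_2}+\frac1{\lambda_3}\ge \operatorname{tr}(ME^{-1})|_{V'}$. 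Here $V'$ is spanned, with matching weights, by real combinations of the two half-disk functions, each satisfying the transplantation bound with constant $\beta_K(A/2)$, and these have disjoint supports. Hence $E$ and $M$ are simultaneously diagonal, and the trace is at least $2/\beta_K(A/2)$.

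The main obstacle is the simultaneous centering together with orthogonality to an arbitrary first eigenfunction, without any simplicity assumption. The degree argument must handle the degenerate boundary configurations, where a pole escapes to $\partial\D$ or the two poles collide. We plan to establish the required boundary behaviour of the moment map, namely that it is homotopic to the identity on the sphere at infinity, in the way GNP do for the folded map, using only the continuity of $\omega$ up to the boundary. A second, subtler point is strictness. Equality would force every sublevel set to be isoperimetric-extremal, which would make $\Omega$ a union of two model disks, and that is impossible for a connected domain. The equality case of the positive-kernel comparison therefore yields the strict inequality.
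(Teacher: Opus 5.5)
Your construction fails where $u_\pm=\phi_{A/2}(r_\pm)e^{i\theta_\pm}$ are extended by zero across the interface. The model factor is a Neumann profile with $\phi_{A/2}(R)=1$. So $|u_\pm|=1$ along the zero curve $\Gamma$, the zero extension jumps there, and it is not in $H^1(\Omega)$. This cannot be patched, because an $H^1(\Omega)$ function supported in $\overline{\Omega_+}$ must have zero trace on $\Gamma$.

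In fact no construction of the kind you describe can exist. Take a two-dimensional $V'\perp\Span\{1,\phi_1\}$ spanned by two disjointly supported functions, each with Rayleigh quotient at most $\beta_K(A/2)$. Every element of $V'$ then has quotient at most $\beta_K(A/2)$, so min--max on $\{1,\phi_1\}^\perp$ would give $\lambda_3\le\beta_K(A/2)$. That is false. The flat disk of area $A$ is admissible with $K=0$ and has $\lambda_3=(j'_{2,1})^2\pi/A\approx9.33\pi/A$, while $\beta_0(A/2)=2\pi(j'_{1,1})^2/A\approx6.78\pi/A$. The parameter count also fails: equal branch areas together with $\int u_+=\int u_-=0$ are five real conditions on the four real parameters of a pole pair.

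The missing idea is to couple the two branches into one complex field $F=u(H)\chi$, with $\chi=\Xi/|\Xi|$. Its phase is $e^{i\theta_+}$ on $\Omega_+$ and $e^{-i\theta_-}$ on $\Omega_-$, so $F$ is continuous across $\Gamma$ whenever the whole-line profile $u$ is. This gives only a two-dimensional real trial plane. Its energy matrix is scalar, $Q_F=(\mathcal N(u)/2)I_2$, but its mass matrix need not be. That is exactly why the argument yields the trace bound $1/\lambda_2+1/\lambda_3\ge2M(F)/E(F)$ and no bound on $\lambda_3$. The two complex moment conditions $\int F=\int(\phi_1\circ\Phi)F=0$ then match the four pole parameters, via a degree argument on the closed bidisk with a one-pole boundary alternative. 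However, the branch areas $s_\pm$ can no longer be prescribed.

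You therefore need four further ingredients:
\begin{enumerate}
\item the coupled minimization of $\mathcal N(u)/\int u(H)^2\,dv_g$ over profiles sharing an interface trace;
\item the kernel comparison $\kappa(H)\le\kappa_K(s_+,s_-)$, derived from $G_\pm\ge G_K$ (a fixed model profile transplanted by matching enclosed areas does not give this, since $Gf'^2$ and $c^2f^2/G$ move oppositely);
\item the model maximum $\kappa_K(s_+,s_-)\le\beta_K(A/2)$ over unequal splits;
\item strictness from the interface transmission identity, which would force $KA=4\pi$.
\end{enumerate}
Connectedness alone excludes nothing. Two model caps glued along $\Gamma$ form a connected surface, and at $KA=4\pi$ two complementary spherical caps glue smoothly.

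Separately, $\omega(\Phi)|\Phi'|^2$ is not continuous up to $\partial\D$ at corners of a Lipschitz boundary. The boundary analysis of your moment map therefore cannot rely on that continuity; it must work for an arbitrary finite absolutely continuous measure.
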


\begin{theorem}[sharpness at fixed area and curvature]\label{thm:sharp}
For every real $K$ and every $A>0$ satisfying the area condition in \eqref{eq:assumptions}, there exist smooth connected domains $U_n\subset\mathbb M_K$, each with disk-type closure, such that
\begin{equation}
 |U_n|=A,\qquad K_{U_n}\equiv K,
 \qquad
 \lambda_k(U_n)\longrightarrow
 \lambda_k\bigl(D_K(A/2)\sqcup D_K(A/2)\bigr)
 \quad\text{for every fixed }k\ge0.
 \label{eq:sharpfull}
\end{equation}
In particular,
\begin{equation}
 \lambda_1(U_n)\longrightarrow0,
 \qquad\lambda_2(U_n)\longrightarrow\beta_K(A/2).
 \label{eq:sharp}
\end{equation}
Consequently, in each of settings (L) and (G) with fixed $K,A$ (allowing the ambient surface to vary in (G)),
\begin{equation}
 \sup\lambda_2=\beta_K(A/2),\qquad
 \inf\left(\frac1{\lambda_2}+\frac1{\lambda_3}\right)
   =\frac{2}{\beta_K(A/2)}.
 \label{eq:sharp-both}
\end{equation}
Neither extremal value is attained by a connected member of either class.
\end{theorem}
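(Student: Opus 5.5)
The statement immediately above is Theorem~\ref{thm:sharp}, so the plan targets it; Theorem~\ref{thm:main} supplies the strict inequalities used at the end.

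\emph{Construction.} We take the standard dumbbell degeneration inside $\mathbb M_K$. Fix two points of $\mathbb M_K$ at distance $L$ larger than twice the radius of $D_K(A/2)$; for $K>0$ this is possible because $A/2<2\pi/K$ makes each disk smaller than a hemisphere, so two disjoint congruent copies fit. Join them by a geodesic corridor of width $\varepsilon_n\to0$ and smooth the junctions. Then shrink the two disk radii slightly so that the total area is exactly $A$. The disk area lost this way tends to $0$ as $\varepsilon_n\to0$, and the area depends continuously on the radius, so this adjustment is possible. The resulting $U_n$ is smooth and simply connected, its closure is a disk, and $K_{U_n}\equiv K$.

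\emph{Spectral convergence.} We show $\lambda_k(U_n)\to\lambda_k(D\sqcup D)$ with $D=D_K(A/2)$; this is the main step. Write $D^{(n)}_1\sqcup D^{(n)}_2$ for the two rescaled disks, whose eigenvalues converge to those of $D\sqcup D$ by continuity under radius change.

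For the upper bound, take the first $k+1$ eigenfunctions of $D^{(n)}_1\sqcup D^{(n)}_2$ and extend them to the corridor by cutoffs. Near a junction we use a logarithmic-capacity cutoff, which costs energy $O(1/|\log\varepsilon_n|)$ in dimension two. This gives $\limsup_n\lambda_k(U_n)\le\lambda_k(D\sqcup D)$ by min--max.

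For the lower bound, take $L^2$-normalized eigenfunctions $u_n$ on $U_n$ with bounded eigenvalue. On a thin corridor a uniform argument in the style of Arrieta--Jiménez-Casas (or Anné's handle results) applies. The restrictions to the disks are bounded in $H^1$. The corridor mass is controlled because the corridor behaves like a one-dimensional segment, and a function with bounded energy on it has $L^2$ norm $O(\varepsilon_n^{1/2})$-small once compared with its endpoint traces; the Neumann spectrum of the corridor alone does not collapse to $0$ beyond its first mode, since its length is fixed. We extract limits on the disks, pass to the limit in the weak form (test functions supported in the disks), and use orthonormality to obtain $\liminf_n\lambda_k(U_n)\ge\lambda_k(D\sqcup D)$.

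Since $D\sqcup D$ has the spectrum $0,0,\beta,\beta,\beta',\dots$ with $\beta=\beta_K(A/2)$ of multiplicity two on each disk, this gives both limits in \eqref{eq:sharp}. It also gives $\lambda_3(U_n)\to\beta$, so $1/\lambda_2+1/\lambda_3\to2/\beta$.

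\emph{Conclusions.} The domains $U_n$ belong to setting (G) with $S=\mathbb M_K$. For setting (L), pull back conformally: $U_n$ is simply connected, so the Riemann map from $\D$ to $U_n$ composed with the conformal chart of $\mathbb M_K$ gives a weight $\omega$ on $\D$. This weight is smooth and positive up to $\partial\D$ because $\partial U_n$ is smooth, by Kellogg's theorem. Its curvature is $K$, and the spectrum is unchanged. Together with Theorem~\ref{thm:main} this gives the sup and inf statements in \eqref{eq:sharp-both}, and the strict inequalities of Theorem~\ref{thm:main} rule out attainment by any connected member.

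The main obstacle is the lower semicontinuity at the corridor, especially ensuring no spurious eigenvalue below $\beta$ arises from corridor modes. I would first try choosing the corridor length small but fixed, so that its transverse and longitudinal Neumann modes above zero lie far beyond $\beta'$.
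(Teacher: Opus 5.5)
\textbf{There is a genuine gap, at exactly the point you flag, and your remedy of a short but fixed corridor does not close it.} A corridor of fixed length $\ell$ and width $\varepsilon_n\to0$ carries its own limiting modes. Let $s$ be arclength along the corridor and take $(2/(\ell\varepsilon_n))^{1/2}\sin(\pi j s/\ell)$. These functions vanish at both ends, extend by zero to $H^1(U_n)$, have unit mass, and have energy tending to $(\pi j/\ell)^2$.

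This refutes your mass claim. Bounded energy does not make the corridor mass $O(\varepsilon_n^{1/2})$. A one-dimensional Poincar\'e inequality only bounds the corridor mass, after removing the endpoint-trace contribution, by a multiple of $\ell^2\int_{\mathrm{corr}}|\nabla u|^2$. That is small only when $\ell$ is small.

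It also refutes the conclusion. Adjoin one such quasimode to your cut-off disk eigenfunctions; the supports are disjoint. Min--max then gives $\limsup_n\lambda_k(U_n)\le\max\{\lambda_{k-1}(D\sqcup D),(\pi/\ell)^2\}<\lambda_k(D\sqcup D)$ for every $k$ with $\lambda_{k-1}(D\sqcup D)<\lambda_k(D\sqcup D)$ and $\lambda_k(D\sqcup D)>(\pi/\ell)^2$. There are infinitely many such $k$. The classical dumbbell theorems of Jimbo and Arrieta--Hale--Han indeed identify the limit spectrum as the union of the Neumann spectrum of $D\sqcup D$ with these Dirichlet values of the segment. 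Hence \eqref{eq:sharpfull} fails for a fixed corridor.

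Only the low-index part survives. Suppose $(\pi/\ell)^2>\beta$ and you import the dumbbell lower bound in place of your heuristic. Then you get $\lambda_1\to0$ and $\lambda_2,\lambda_3\to\beta$. That yields \eqref{eq:sharp} and \eqref{eq:sharp-both}, and non-attainment follows from Theorem~\ref{thm:main}.

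\textbf{How to get every fixed $k$.} The corridor must disappear in the limit. One option is to let $\ell_n\to0$ together with $\varepsilon_n$ and prove the corridor-mass estimate, including the junction traces. The other is the paper's route: take the union $U_\delta$ of two \emph{overlapping} geodesic disks, with centers at distance $2R-\delta$ and common radius $R_\delta\downarrow R$ tuned so that $|U_\delta|=A$.
\begin{itemize}
\item $U_\delta$ is covered by the two moving disks, so restricting eigenfunctions to each whole disk loses no mass.
\item The overlap $I_\delta$ shrinks to the contact point (Lemma~\ref{lem:exactarea}), so its mass vanishes by a uniform $L^4$ Sobolev bound.
\item Energy is handled by lower semicontinuity on truncated disks.
\item The upper bound uses a logarithmic cutoff at the contact point, as in your plan.
\item The two corners are then smoothed and the area corrected by a spectrally stable bi-Lipschitz deformation (Lemma~\ref{lem:mapstability}).
\end{itemize}

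Your treatment of setting (L) through a Riemann map and Kellogg's theorem is fine; the paper simply uses the chart image directly.
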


\begin{remark}[Relation to the Langford--Laugesen formulation]
With our indexing, $\mu_3$ in~\cite{LL} is $\lambda_2$ and $\mu_2(D_K)$ is $\beta_K(A/2)$. Theorem~\ref{thm:main} gives the strict bound of~\cite[Conjecture~1.4]{LL} with its stated weight regularity and the bounded-membrane convention above, together with a strict reciprocal strengthening.

Theorem~\ref{thm:sharp} constructs the predicted degeneration to two equal disks. The area condition $KA<4\pi$ is automatic for $K\le0$; the critical positive-curvature endpoint is discussed below.
\end{remark} 

\paragraph{Method and related constructions.}
The Green-level approach of Provenzano--Savo~\cite[Sections~1.1 and 7]{PS} optimizes profiles along a one-pole Green function and uses integer-flux gauge equivalence. A suitable pole makes the trial function for the ordinary Laplacian orthogonal to constants. We couple the two sign branches of $H=\psi_p-\psi_q$, compare the model over unequal branch areas, and exclude equality across the interior zero curve in the original metric.

The minimizing profile varies with the pole pair. Its continuity on the closed parameter space and the boundary-face identities allow a degree calculation to impose orthogonality to constants and one fixed first positive eigenfunction, without requiring equal branch areas. Orthogonality of this type also appears in the two-ball constructions of~\cite{BH,FL,BMN}. Fixed-profile normalization originates in Hersch's spherical argument~\cite{Hersch1970}; its disk version and continuous dependence are treated in~\cite{GNP,GP,LaugesenHS}.

The real trial plane has a Dirichlet-energy matrix that is a scalar multiple of the identity. We use a two-mode shifted form of Hersch's trace variational principle~\cite{Hersch1961}; see Hile--Xu~\cite{HileXu1993} for reciprocal-eigenvalue trace inequalities and~\cite[Lemma 3.1]{CY} for the shifted formulation. The shift removes constants and one chosen first positive eigenfunction, also when that eigenvalue is multiple. The variational proof and its exact remainder are given in Lemmas~\ref{lem:shifted-ritz} and~\ref{lem:ritz-remainder}. Neither the first-positive comparison on the original domain nor the space-form two-ball theorems enter our proof.

\paragraph{Related reciprocal and stability inequalities.}
Ashbaugh--Benguria~\cite{AshbaughBenguria1993} established universal lower-eigenvalue reciprocal bounds and formulated the sharp Euclidean reciprocal-sum problem. Xia--Wang~\cite{XiaWang2023} proved the sharp $(n-1)$-term Euclidean and hyperbolic inequalities, while Benguria--Brandolini--Chiacchio~\cite{BenguriaBrandoliniChiacchio2020} obtained the corresponding harmonic-mean comparison for spherical domains contained in a hemisphere. Meng--Wang~\cite{MengWang2024} extended this lower-order program to rank-one symmetric spaces, and Chen~\cite{Chen2024} obtained related curved-manifold bounds under simultaneous sectional-curvature upper and Ricci-curvature lower assumptions. He--Li--Tang~\cite{HeLiTang2026} proved the full Euclidean reciprocal-sum conjecture, and You--Zhang~\cite{YouZhang2026} proved the full $n$-term result in hyperbolic space and for spherical domains contained in an open hemisphere.

For shifted sums in the two-ball comparison, Bucur--Martinet--Nahon~\cite[Remark 11]{BMN} asked for the shifted $n$-term reciprocal inequality, and Chen--Yang~\cite[Theorem 1.1]{CY} proved it on space forms.

Quantitative refinements concern a further question. Nadirashvili~\cite{Nadirashvili1997} gave an asymmetry refinement of planar Neumann comparison, Brasco--Pratelli~\cite{BrascoPratelli2012} proved sharp stability for the Szeg\H{o}--Weinberger inequality, and Li--Wang~\cite{LiWang2026} established spectral and geometric stability for the full Euclidean reciprocal-sum inequality. Section~\ref{sec:quadratic} instead quantifies the spectral, model, and kernel deficits of the selected trial configuration.

\paragraph{Scope of the main results.}\label{sec:combinedscope}
The disk-type assumption is essential for the general curvature-bound statement. On the flat cylinder $(0,1)\times(\mathbb R/\mathbb Z)$, with Neumann conditions at its two boundary circles, $A=1$, $K_g=0$, and separation of variables gives $\lambda_1=\pi^2$, $\lambda_2=\lambda_3=4\pi^2$. Write $b_{\rm E}=\lambda_1(\D,\mathrm{flat})$. The unit-disk test $u(x,y)=x$ gives $b_{\rm E}\le4$, so $\beta_0(1/2)=2\pi b_{\rm E}\le8\pi<4\pi^2$. Thus the two inequalities of Theorem~\ref{thm:main} would fail if the topology restriction were simply removed.
The strict reciprocal inequality does not assert $\lambda_3<\beta_K(A/2)$. The curvature assumption in (L) is the classical interior inequality \eqref{eq:LL-curvature}; the sign cases refer to the upper-bound parameter $K$, not to a prescribed sign of $K_g$. Distributional curvature measures and arbitrary unbounded weighted domains are outside these assertions. At $KA=4\pi$ for $K>0$, the transmission contradiction in Theorem~\ref{thm:transmission} disappears; that endpoint is not included.

\paragraph{Organization.}
Sections~\ref{sec:prelimmodel} and~\ref{sec:green} develop the analytic and geometric preliminaries, the two-pole trial functions, and the positive-kernel comparison. Section~\ref{sec:model} proves the model maximum and strict domain comparison. Section~\ref{sec:center} establishes the simultaneous moment conditions, treats the interior two-pole and one-pole boundary-parameter alternatives, and completes the spectral argument.

Section~\ref{sec:sharpgeometry} proves sharpness, and Section~\ref{sec:quadratic} gives the quantitative estimates. Appendices~\ref{sec:alternativefold}--\ref{app:constants} contain the complementary folding method, density and symmetry estimates, and supporting regularity and constant calculations.

\section{Analytic and geometric preliminaries}\label{sec:prelimmodel}
\subsection{Forms, conformal coordinates, and local curvature}
\paragraph{Conventions.}
We use $\Delta_g=\operatorname{div}_g\nabla$, so $-\Delta_g\psi_p=\delta_p$ in the Green-function sense. Unless specified otherwise, geometric area and length use $dv_g$ and $ds_g$. Set
\begin{equation}
 A=|\Omega|_g,\qquad m=A/2,\qquad
 \beta=\beta_K(m),\qquad c=2\pi.
 \label{eq:notation}
\end{equation}

\begin{lemma}[The weighted Neumann form]\label{lem:LL-form}
In setting (L), the form \eqref{eq:mass-energy-forms} is densely defined, closed and nonnegative in $L^2(\Omega,\omega\,dx)$, with form domain $H^1(\Omega)$. Its form-domain inclusion into this $L^2$ space is compact. Its kernel consists exactly of the constants, and its eigenvalues satisfy \eqref{eq:indexing}. The same statements hold in setting (G).
\end{lemma}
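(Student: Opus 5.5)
The key observation is that, because $\omega$ is continuous and positive on the compact set $\overline\Omega$, there are constants $0<\omega_-\le\omega_+<\infty$ with $\omega_-\le\omega\le\omega_+$ on $\overline\Omega$. Hence $L^2(\Omega,\omega\,dx)$ and $L^2(\Omega,dx)$ are the same space with equivalent norms. Every statement will then reduce to the corresponding statement for the flat Neumann form on a bounded Lipschitz domain.

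First I treat closedness and density. The form $E(f,h)=\int_\Omega\nabla f\cdot\nabla h\,dx$ in (L) does not involve $\omega$, since the conformal factor cancels in dimension two. With form domain $H^1(\Omega)$, the form norm $E(f)+M(f)$ is equivalent to $\|f\|_{H^1}^2$ by the weight bounds. Since $H^1(\Omega)$ is complete, the form is closed, and it is clearly nonnegative and symmetric. It is densely defined because $C_c^\infty(\Omega)\subset H^1(\Omega)$ is dense in $L^2(\Omega,dx)$, hence in the weighted space.

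Next comes compactness. Since $\Omega$ is bounded and Lipschitz, the Rellich--Kondrachov theorem gives a compact embedding $H^1(\Omega)\hookrightarrow L^2(\Omega,dx)$. Composing with the bounded identity map into $L^2(\Omega,\omega\,dx)$ preserves compactness. By standard form theory, the associated self-adjoint operator therefore has compact resolvent. Its spectrum is a sequence of nonnegative eigenvalues of finite multiplicity tending to infinity, with an orthonormal eigenbasis, and the min--max characterization applies.

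Then I identify the kernel. If $E(f)=0$, then $\nabla f=0$ a.e. on the connected open set $\Omega$, so $f$ is a.e. constant; conversely constants lie in $H^1(\Omega)$ because $\Omega$ is bounded. The kernel is therefore one-dimensional, which gives $0=\lambda_0<\lambda_1\le\lambda_2\le\cdots$ as in \eqref{eq:indexing}.

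For setting (G), I cover $\overline\Omega$ by finitely many charts, using that $g$ is smooth and positive definite on a neighborhood of the compact set $\overline\Omega$. In each chart the metric coefficients and $\sqrt{\det g}$ are bounded above and below. So $H^1(\Omega,g)$ and $L^2(\Omega,dv_g)$ coincide locally with their Euclidean counterparts, with equivalent norms. A Lipschitz domain in a smooth surface is locally a Euclidean Lipschitz domain, so Rellich--Kondrachov holds after a partition of unity. The remaining steps go through verbatim, and connectedness again yields the kernel.

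The only point needing care is that no regularity of $\omega$ beyond continuity and positivity up to $\partial\Omega$ is used. I expect no real obstacle.
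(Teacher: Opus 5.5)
Your proposal is correct and follows essentially the same route as the paper. Both arguments use uniform two-sided bounds $0<\omega_-\le\omega\le\omega_+$ to reduce closedness, density and compactness to the flat $H^1(\Omega)$ theory and Rellich compactness on a bounded Lipschitz domain. Both then use connectedness to identify the kernel as the constants, and handle setting (G) with finitely many charts and uniform metric comparability on the compact closure.
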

\begin{proof}
In (L), compactness of $\overline\Omega$ and \eqref{eq:LL-weight} give constants
\[
 0<\omega_-:=\min_{\overline\Omega}\omega
 \le\omega\le\max_{\overline\Omega}\omega=:\omega_+<\infty.
\]
Thus the weighted and unweighted $L^2$ norms are equivalent, and $E(f)+M(f)$ is equivalent to the usual squared $H^1(\Omega)$ norm. Density and completeness give a closed densely defined form.

Rellich compactness for the fixed bounded Lipschitz domain gives its compact inclusion into $L^2(\Omega,\omega\,dx)$; see~\cite[Chapter~3]{McLean}. Zero Dirichlet energy implies constancy because $\Omega$ is connected. The spectral theorem for closed quadratic forms and the min--max principle give the stated spectrum; see~\cite[Sections~2.3 and 4.3]{Teschl}. The associated weak eigenvalue equation is
\[
 \int_\Omega\nabla f\cdot\nabla h\,dx
 =\lambda\int_\Omega fh\,\omega\,dx
 \quad(h\in H^1(\Omega)).
\]
This is the weak Neumann realization; no pointwise boundary derivative is imposed on trial functions.

In (G), finitely many smooth metric charts and uniform metric comparability on the compact closure give the same form and compactness argument. The norm-equivalence constants may depend on the domain and metric.
\end{proof}

\paragraph{Interior conformal coordinates.}
In (L), a Riemann map $\Phi:\D\to\Omega$ is an interior biholomorphism. For (G), the disk-type domain is orientable and simply connected, so uniformization~\cite[Section~27]{Forster} leaves the disk and plane as the two noncompact conformal types. To exclude the plane, choose a smooth ambient function $f$ with $0\le f\le1$ and nonconstant boundary trace. On the fixed Lipschitz domain, the Poincar\'e inequality on $H^1_0(\Omega)$ and the direct method give the unique minimizer of the Dirichlet energy in $f+H^1_0(\Omega)$. It is weakly harmonic. Truncation to $[0,1]$ preserves its trace and does not increase the energy, so uniqueness gives $0\le u\le1$. Its nonconstant trace makes $u$ nonconstant, and interior elliptic regularity makes it smooth. If $\Omega$ had the planar conformal type, pulling $u$ back to $\mathbb C$ would contradict Liouville's theorem for bounded harmonic functions. Thus (G) also has an interior conformal map $\Phi:\D\to\Omega$.

In either case the pullback measure has a positive $C^2_{\rm loc}(\D)$ density and finite mass. No boundary bounds or extension of this density to $\partial\D$ are assumed. In (L) it is
\begin{equation}
 \rho(z)=\omega(\Phi(z))|\Phi'(z)|^2,
 \qquad \int_\D\rho\,dx=A.
 \label{eq:LL-pullback}
\end{equation}
The conformal chain rule and $\Delta\log|\Phi'|^2=0$ show that its curvature is $K_g\circ\Phi$. The same observation applies to the branch coordinates below.

\begin{lemma}[Local Bol comparison at $C^2$ regularity]\label{lem:local-bol}
Let $U\subset\mathbb C$ be open and let $g=h|dz|^2$ with $h>0$ and $h\in C^2(U)$. If $V\Subset U$ is a smooth disk-type domain and $K_g\le K$ on $V$, then
\begin{equation}
 L_g(\partial V)^2\ge |V|_g(4\pi-K|V|_g).
 \label{eq:bol}
\end{equation}
The conclusion applies, by interior conformal coordinates, to the relatively compact Green superlevel disks in either setting (L) or (G).
\end{lemma}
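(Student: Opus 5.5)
We will prove the Bol inequality \eqref{eq:bol} at $C^2$ regularity by an approximation and stability argument that reduces it to the classical smooth statement. The classical Alexandrov--Bol inequality says that a simply connected domain with $C^\infty$ (or even $C^2$) metric whose Gaussian curvature is at most $K$ satisfies $L^2\ge 4\pi|V|-K|V|^2$. Alexandrov proved this in the much broader setting of surfaces of bounded curvature, with $K$ replaced by the positive part of the curvature measure. The cleanest route is therefore to observe that $g=h|dz|^2$ with $h\in C^2(U)$ already has continuous curvature $K_g=-\Delta\log h/(2h)$ on $\overline V$. One then applies the standard proof directly on $\overline V$, which is a compact subset of $U$ where $h$ is $C^2$ up to $\partial V$.

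The concrete proof we would write follows Bol--Fiala via the isoperimetric profile of superlevel sets. First, pull back by a Riemann map $\phi:\D\to V$. Since $\partial V$ is smooth, Kellogg--Warschawski gives $\phi\in C^{1,\alpha}(\overline\D)$ with $\phi'\neq0$ up to the boundary. So $\tilde h=h\circ\phi\,|\phi'|^2$ is positive and continuous on $\overline\D$, is $C^2$ in $\D$, and has the same curvature bound. Next, set $u=\log\tilde h$, so that $-\Delta u=2K_g\tilde h\le 2K\tilde h$. Third, approximate $u$ on slightly smaller disks $\D_r$ ($r\uparrow1$) by the same function; no smoothing is needed there, because $u\in C^2(\overline{\D_r})$.

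On $\D_r$ we invoke the classical Bol inequality for a $C^2$ conformal metric on a disk with curvature $\le K$. This can be proved either by Huber's/Bandle's argument or by the level-set argument. In the level-set argument, for the subharmonic-type function $u$ one studies $a(t)=|\{u>t\}|_g$ and $\ell(t)=L_g(\{u=t\})$. One uses the coarea formula and Cauchy--Schwarz to get $\ell^2\le -a'(t)\int_{\{u=t\}}|\nabla u|$. By the divergence theorem, $\int_{\{u=t\}}|\nabla u|=-\int_{\{u>t\}}\Delta u\le 2K a(t)$ plus a boundary flux term. Combined with the flat isoperimetric inequality, this yields the differential inequality integrated in Bandle's book (\emph{Isoperimetric inequalities and applications}, Ch.~I). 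We would cite that theorem directly, since its hypotheses are exactly $C^2$ interior density and a curvature upper bound. Finally, we let $r\to1$: the lengths $\int_{\partial\D_r}\sqrt{\tilde h}\,|dz|$ and areas converge to $L_g(\partial V)$ and $|V|_g$ by uniform continuity of $\tilde h$ on $\overline\D$. Hence \eqref{eq:bol} passes to the limit.

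The last sentence of the lemma follows by transporting Green superlevel disks through the interior conformal map $\Phi$ of the preceding paragraph. Relative compactness puts them in a compact subset of $\D$ where $\rho$ is $C^2$, and by Sard's theorem almost every level is a smooth curve. For the remaining levels one uses monotone approximation of superlevel sets, with continuity of area and lower semicontinuity of length. The main obstacle is bookkeeping at the boundary: ensuring the cited classical inequality applies with only $C^2$ interior regularity. The exhaustion by $\D_r$ with $C^1$ boundary control of $\phi$ handles this. The non-smooth level sets are the other delicate point, and Sard's theorem applied to the $C^2$ (indeed real-analytic away from the poles) Green function addresses it.
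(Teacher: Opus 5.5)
The substance of this lemma is the step from the classical Bol--Fiala inequality to a conformal factor that is only $C^2$. The paper's reference for the classical case, Chavel--Feldman, states it for smooth complete surfaces, and your proposal never actually makes this step. The Riemann map, Kellogg--Warschawski and the exhaustion by $\D_r$ all concern regularity \emph{at} $\partial V$, which is not at issue: $\overline V\subset U$ already gives $h\in C^2$ on a neighborhood of $\overline V$. None of this upgrades interior $C^2$ to smoothness, so your claim that the exhaustion ``handles'' the $C^2$ obstacle misidentifies the difficulty.

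What remains is the assertion that some cited theorem already holds under $C^2$ hypotheses. If you have such a reference, the lemma is just that citation applied to $V$ itself, and the detour is superfluous. If your reference is stated for smooth metrics, there is a gap. Your fallback sketch does not close it. The superlevel sets of $\log\tilde h$ reach $\partial\D$, so the flux identity picks up a boundary term of no definite sign. The flat isoperimetric inequality alone does not turn that into \eqref{eq:bol}.

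The missing idea is to approximate the metric rather than the domain. First dispose of the trivial case $K>0$, $K|V|_g\ge4\pi$. Then put $w=\tfrac12\log h$, choose $\overline V\subset U_0\Subset U_1\Subset U$, and mollify to get smooth $w_n\to w$ in $C^2(\overline U_0)$. Consequently $K_{g_n}=-e^{-2w_n}\Delta w_n\to K_g$ uniformly on $\overline V$, so $K_{g_n}\le K+\varepsilon_n$ with $\varepsilon_n\downarrow0$. Also $|V|_{g_n}\to|V|_g$, $L_{g_n}(\partial V)\to L_g(\partial V)$, and $(K+\varepsilon_n)|V|_{g_n}<4\pi$ for large $n$. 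Next take $\eta\in C_c^\infty(U_0)$ equal to one near $\overline V$, and replace $w_n$ by $\eta w_n$ extended by zero. This gives a complete metric on $\mathbb C$ that agrees with $g_n$ near $\overline V$. The smooth inequality needs the curvature bound only on $V$, so it applies with bound $K+\varepsilon_n$, and one passes to the limit.

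Separately, your Sard step has the semicontinuity backwards. Lower semicontinuity gives $L(\partial D)\le\liminf_n L(\partial D_n)$, which cannot transfer a \emph{lower} bound on length to a limit domain. No approximation is needed there anyway. In the conformal coordinate where the Green function is $-c^{-1}\log|\zeta|$, every positive level is a round circle. So each superlevel set is already a smooth disk-type domain compactly contained in $\Omega$, and the first part applies directly.
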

\begin{proof}
If $K>0$ and $K|V|_g\ge4\pi$, the right-hand side of \eqref{eq:bol} is nonpositive and the conclusion is immediate. Thus assume $4\pi-K|V|_g>0$, which holds automatically for $K\le0$. The smooth case is the classical Bol--Fiala inequality; see~\cite[Isoperimetric inequality (I)]{CF}, \cite{Bandle}, and its use in~\cite[proof of Lemma~6.1]{LL}.

Choose open sets $U_0,U_1$ with $\overline V\subset U_0\Subset U_1\Subset U$ and put $w=\tfrac12\log h$. All regularization takes place in these fixed interior neighborhoods. Local convolution gives smooth functions $w_n$ on $U_1$ converging to $w$ in $C^2(\overline U_0)$. For $g_n=e^{2w_n}|dz|^2$,
\[
 K_{g_n}=-e^{-2w_n}\Delta w_n
 \longrightarrow -e^{-2w}\Delta w=K_g
 \quad\hbox{uniformly on }\overline V.
\]
Consequently $K_{g_n}\le K+\varepsilon_n$ on $V$ for suitable $\varepsilon_n\downarrow0$. Both $|V|_{g_n}\to|V|_g$ and $L_{g_n}(\partial V)\to L_g(\partial V)$ follow from uniform convergence of the metric factors. In particular $(K+\varepsilon_n)|V|_{g_n}<4\pi$ for all sufficiently large $n$.

To meet the complete-surface hypothesis in~\cite[p.~83, Isoperimetric inequality (I)]{CF}, choose $\eta\in C_c^\infty(U_0)$ equal to one near $\overline V$. Define $\widetilde w_n=\eta w_n$ on $U_1$ and extend it by zero to $\mathbb C$. The smooth positive metric
\[
 \widetilde g_n=e^{2\widetilde w_n}|dz|^2
\]
is complete: for each fixed $n$ it is uniformly comparable to the Euclidean metric and equals that metric outside a compact set. It agrees with $g_n$ near $\overline V$. The cited inequality requires the curvature bound only on $V$, not on the exterior transition region. Applying it to $V$ with bound $K+\varepsilon_n$ and passing to the limit proves \eqref{eq:bol}.

\end{proof}

\subsection{Model disks and radial eigenfunctions}
Define
\begin{equation}
 S_K(r)=\begin{cases}
 \sin(\sqrt K r)/\sqrt K,&K>0,\\
 r,&K=0,\\
 \sinh(\sqrt{-K}r)/\sqrt{-K},&K<0,
 \end{cases}
 \qquad V_K(r)=2\pi\int_0^rS_K(t)\,dt.
 \label{eq:polar}
\end{equation}
The model metric is $dr^2+S_K^2d\theta^2$. For $K>0$ its disk radius is less than $\pi/\sqrt K$. With $a=V_K(r)$,
\begin{equation}
 S_K''=-KS_K,\quad S_K'^2+KS_K^2=1,\quad
 S_K'=1-Ka/c,\quad
 G_K(a):=(cS_K)^2=a(2c-Ka).
 \label{eq:Gmodel}
\end{equation}

\begin{lemma}[First angular eigenfunctions and radial monotonicity on the half-area disk]\label{lem:angularmodel}
On a model disk of area $s=V_K(R)$, the first positive Neumann eigenspace is exactly
$\Span\{v(r)\cos\theta,v(r)\sin\theta\}$, where
\begin{equation}
 -(S_Kv')'+v/S_K=\beta_K(s)S_Kv,\qquad
 v(0)=0,\quad v'(R)=0,\quad v(R)=1.
 \label{eq:radialmodel}
\end{equation}
The factor is positive on $(0,R]$ and satisfies $v(r)=b r+O(r^3)$, $b>0$. At $s=m$ under \eqref{eq:assumptions}, $v'>0$ on $(0,R)$, and $\beta>2K$ when $K\ge0$.
\end{lemma}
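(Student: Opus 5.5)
The plan is to separate variables in geodesic polar coordinates on the model disk $D_K(s)$ of radius $R$. The metric is $dr^2+S_K^2d\theta^2$, so $\Delta=S_K^{-1}\partial_r(S_K\partial_r)+S_K^{-2}\partial_\theta^2$. Expanding a Neumann eigenfunction in Fourier modes $e^{ik\theta}$ reduces the spectrum to the radial Sturm--Liouville problems
\[
 -(S_Kv')'+\frac{k^2}{S_K}v=\lambda S_Kv,\qquad v \hbox{ regular at } 0,\quad v'(R)=0.
\]
Let $\nu_j(k)$ denote the $j$-th eigenvalue of the mode-$k$ problem.

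First I will identify $\beta_K(s)$ with $\nu_1(1)$. Two facts drive this. Since the potential $k^2/S_K$ increases with $k$, $\nu_1(k)$ is increasing in $k$. The radial mode satisfies $\nu_1(0)=0$, with constant eigenfunction, and $\nu_2(0)$ has an eigenfunction with one interior zero. The main step is $\nu_1(1)<\nu_2(0)$. I will prove it by the classical trick: if $u$ is the second radial eigenfunction, then $u'$ satisfies the $k=1$ equation with the same eigenvalue. Indeed, differentiating the radial equation and using $S_K''=-KS_K$ gives
\[
 -(S_K(u')')'+\frac{S_K'^2}{S_K}u'=(\lambda-K)S_Ku'\quad\Bigl(\hbox{and } S_K'^2=1-KS_K^2\Bigr),
\]
that is, $-(S_Kw')'+w/S_K=\lambda S_K w$ for $w=u'$. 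This $w$ has $w(0)=0$ and $w(R)=0$, so it is a Dirichlet-type eigenfunction for mode $1$. Hence $\nu_2(0)$ is at least the first mixed eigenvalue for $k=1$, which strictly exceeds the Neumann one $\nu_1(1)$ because the Dirichlet condition is the more restrictive one. This identity must be checked carefully for all signs of $K$. If it fails, I will fall back on Rayleigh quotients with trial $u'$. The same comparison excludes higher radial modes, and monotonicity in $k$ excludes $k\ge2$. Therefore the eigenspace is exactly $\Span\{v\cos\theta,v\sin\theta\}$ with $v$ the first mode-$1$ eigenfunction. After normalizing $v(R)=1$, the first eigenfunction has no interior zero, which gives positivity. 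Frobenius analysis at the regular singular point $r=0$ (indicial roots $\pm1$) gives $v=br+O(r^3)$ with $b>0$.

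For monotonicity at $s=m$, I integrate the equation: $S_Kv'(r)=\int_r^R(\beta S_K-1/S_K)v\,dt$, using $v'(R)=0$. Where the integrand is negative, near $r=0$, the quantity $S_Kv'$ increases from $0$. The sign of $\beta S_K^2-1$ changes at most once on $(0,R)$ when $S_K$ is monotone there. This holds for $K\le0$, and for $K>0$ because $R<\pi/(2\sqrt K)$ whenever $Km<2\pi$, which is exactly what $KA<4\pi$ gives. So $S_Kv'$ first increases from $0$, then decreases to $0$ at $R$, and hence stays positive in between.

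Finally, for $\beta>2K$ when $K>0$, I use the trial function $\sin(\sqrt K r)\cos\theta$, restricted from the first spherical harmonic. On the hemisphere-contained disk its Rayleigh quotient is computed explicitly and comes out below or at $2K$, while Neumann data fail. I expect the cleaner route to go through Sturm comparison instead: $S_K'$ solves the mode-$1$ equation with $\lambda=2K$, and $S_K'>0$ on $[0,R]$ since $R<\pi/(2\sqrt K)$. Comparing it with $v$, where $v'(R)=0$ while $(S_K')'<0$, forces $\beta>2K$. The case $K=0$ is immediate since $\beta>0$. I expect the main obstacle to be the strict separation $\nu_1(1)<\nu_2(0)$ uniformly in the sign of $K$.
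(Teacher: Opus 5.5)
Your identification of the eigenspace and your proof of $v'>0$ match the paper's argument. For the eigenspace, you use the derivative trick (turning a positive-eigenvalue radial eigenfunction $u$ into a mode-$1$ eigenfunction $u'$ vanishing at $0$ and $R$), monotonicity of the sector quotients for $k\ge1$, and Frobenius at the center. For $v'>0$, you use the single sign change of $1-\beta S_K^2$ in $(S_Kv')'$. The identity you flagged holds for every sign of $K$, since it uses only $S_K''=-KS_K$ and $S_K'^2+KS_K^2=1$. One small repair there: ``Dirichlet is more restrictive'' only gives $\ge$. Strictness needs the remark that equality would make the Dirichlet minimizer also a Neumann minimizer, so $w(R)=w'(R)=0$ and hence $w\equiv0$.

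The genuine gap is the final claim $\beta>2K$. Your first route cannot work: a trial function bounds $\beta$ only from above. Moreover, the quotient of $S_K\cos\theta$ on the half-area disk is $K(x^2+x+4)/[(1-x)(x+2)]$ with $x=\cos(\sqrt K R)\in(0,1)$, which is strictly \emph{above} $2K$. Your second route uses the wrong comparison function. $S_K'$ does not solve the mode-$1$ equation: substituting forces $(\lambda-2K)S_K^2=1$, and $S_K'(0)=1$ is not a regular center value. It solves the radial $k=0$ equation at $\lambda=2K$. The mode-$1$ regular solution at $\lambda=2K$ is $h=S_K$, which for $K>0$ is what your own derivative trick produces from $S_K'$, up to the factor $-1/K$. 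The relevant sign is $h'(R)=S_K'(R)>0$, not negative. With this $h$ one has $[S_K(v'h-vh')]'=(2K-\beta)S_Kvh$. Integrating over $(0,R)$, with $v'(R)=0$, $v(R)=1$ and vanishing center term, gives
\[
 (\beta-2K)\int_0^R S_Kvh\,dr=S_K(R)S_K'(R)>0 .
\]
Since $S_K'(R)>0$ exactly because $R<\pi/(2\sqrt K)$ at half area, this proves $\beta>2K$. This is the paper's Picone identity \eqref{eq:picone} written in Wronskian form. With your sign $h'(R)<0$, the same identity would instead give $\beta<2K$, so the step as written points the wrong way.
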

\begin{proof}
\emph{Angular sectors.} In angular sector $\ell\ge1$ the quotient is
\[
 \frac{\int_0^R(S_K f'^2+\ell^2 f^2/S_K)\,dr}
      {\int_0^R S_K f^2\,dr}.
\]
All these sectors have the same radial form domain. For $\ell\ge2$ the quotient exceeds its $\ell=1$ counterpart by at least $(\ell^2-1)/\max S_K^2$. The sector-$1$ minimum is attained by compactness. Taking the radial absolute value and using the ODE gives positivity, including at the Neumann endpoint. Regular solutions have the stated center expansion and form a one-dimensional space: for two such solutions at the same spectral parameter,
$S_K(v_1v_2'-v_1'v_2)$ is constant and tends to zero at the center. Its vanishing proves linear dependence.

\emph{Radial modes.} To exclude a lower positive radial mode, differentiate its equation
$q''+(S_K'/S_K)q'+\lambda q=0$. Since $(S_K'/S_K)'=-S_K^{-2}$, the nonzero function $q'$ is a sector-$1$ eigenfunction with the Dirichlet condition at $r=R$. The center expansion $q'(r)=-\lambda q(0)r/2+O(r^3)$ and $q''=O(1)$ ensure that $q'$ belongs to that angular form domain. That lowest Dirichlet value strictly exceeds the sector-$1$ Neumann minimum: equality would make an attained Dirichlet minimizer also satisfy the Neumann condition, and the two zero endpoint data force the solution to vanish. Thus the first positive eigenspace has exactly the two stated angular modes.

\emph{The half-area disk.} At the half-area scale $S_K$ is strictly increasing (the spherical radius is below a hemisphere). Set $y=S_Kv'$. Then
$y'=(1-\beta S_K^2)v/S_K$, whose coefficient changes sign at most once, from positive to negative. Since $y(0)=y(R)=0$, it must change sign and $y>0$ in between.

To compare the eigenvalue with $2K$, use $h=S_K$, which solves the first-angular equation with spectral parameter $2K$. Integration by parts after writing $v=h(v/h)$ yields
\begin{equation}
 (\beta-2K)\int_0^R S_Kv^2\,dr
 =S_K'(R)v(R)^2+\int_0^R S_K^3((v/S_K)')^2\,dr>0
 \label{eq:picone}
\end{equation}
when $K\ge0$. The center term vanishes and $S_K'(R)>0$.
\end{proof}

In area coordinates, the angular equation is
\begin{equation}
 -(G_K f')'+\frac{c^2}{G_K}f=\beta_K(s)f,
 \qquad f(a)=O(\sqrt a),\quad f'(s)=0.
 \label{eq:areaeigenmodel}
\end{equation}
The model Green-time coordinate, normalized by $T_{K,s}(s)=0$, and its inverse are
\begin{align}
 T_{K,s}(a)&=\int_a^s\frac{d\alpha}{G_K(\alpha)}
 =\frac1{4\pi}\log\frac{s(4\pi-Ka)}{a(4\pi-Ks)},\label{eq:modelclock}\\
 a_{K,s}(t)&=\frac{4\pi s\vartheta(t)}{4\pi-Ks(1-\vartheta(t))},
 \qquad \vartheta(t)=e^{-4\pi t}.\label{eq:model-arbitrary-superlevel}
\end{align}
Write $a_0=a_{K,m}$, $B_0=-a_0'=G_K(a_0)$, and express the half-area profile in the Green-time coordinate as $g_0(t)$. Then
\begin{gather}
 -g_0''+c^2g_0=\beta B_0g_0,\quad
 g_0(0)=1,\quad g_0'(0)=0,\quad
 w_0:=-(g_0^2)'>0\ (t>0),\label{eq:greentimeprofile}\\
 M_0=\int_0^\infty g_0^2B_0\,dt=\int_0^m f_0^2\,da,
 \qquad E_0=\int_0^\infty(g_0'^2+c^2g_0^2)\,dt=\beta M_0.
 \label{eq:modelenergy}
\end{gather}
Here $f_0(a)=g_0(T_{K,m}(a))$. The center expansion gives $g_0,g_0'=O(e^{-ct})$, justifying integration by parts, and $\int_0^\infty w_0=1$. The constants $E_0,M_0$ are for one \emph{complex} angular field; a real component has half of each.

\subsection{Cumulative area for the later estimates}\label{sec:densityfold}
The next comparison will be used in the quantitative and complementary arguments, including the uniform coercivity estimate. It is not an input to the main two-pole proof.
For a positive $C^2$ conformal density $h$ and model parameters $s>0$, $\varkappa s<4\pi$ when $\varkappa>0$, set
\begin{equation}
 \mathscr K[h]= -\frac{\Delta\log h}{2h},\qquad
 q_{\varkappa,s}(r)=\frac{4s(4\pi-\varkappa s)}
 {[4\pi-\varkappa s+\varkappa sr^2]^2}.
 \label{eq:curvature-density}
\end{equation}
Direct differentiation and integration give $\mathscr K[q_{\varkappa,s}]=\varkappa$ and
\begin{equation}
 \mathcal A_{\varkappa,s}(r)=2\pi\int_0^r t q_{\varkappa,s}(t)\,dt
 =\frac{4\pi sr^2}{4\pi-\varkappa s+\varkappa sr^2}.
 \label{eq:Acum-model}
\end{equation}
This equals \eqref{eq:model-arbitrary-superlevel} when $r=e^{-ct}$.

\begin{lemma}[cumulative area and monotone radial weights]\label{lem:cumulative}
Suppose $h>0$ belongs to $C^2(\D)$, $\int_\D h=s<\infty$, $\mathscr K[h]\le\varkappa$, and $\varkappa s<4\pi$ if $\varkappa>0$. Then
\begin{equation}
 \mathcal A_h(r):=\int_{|z|<r}h\le\mathcal A_{\varkappa,s}(r).
 \label{eq:cumulative-order}
\end{equation}
For every nonnegative nondecreasing absolutely continuous $f$ on $[0,1]$,
\begin{equation}
 \int_\D f^2(h-q_{\varkappa,s})
 =\int_0^1(f^2)'(\mathcal A_{\varkappa,s}-\mathcal A_h)\,dr\ge0.
 \label{eq:radialmass-order}
\end{equation}
If $(f^2)'>0$ almost everywhere, equality implies $h=q_{\varkappa,s}$.
\end{lemma}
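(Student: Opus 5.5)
The plan is to prove \eqref{eq:cumulative-order} by an isoperimetric ODE comparison, and then to get \eqref{eq:radialmass-order} by a layer-cake integration by parts.

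\textbf{Step 1: a differential inequality for $\mathcal A_h$.}
Fix $r<1$. Since $h\in C^2(\D)$ and $\{|z|<r\}\Subset\D$, the disk $V_r=\{|z|<r\}$ is a smooth disk-type domain to which \cref{lem:local-bol} applies, with $g=h|dz|^2$ and bound $\varkappa$. Because $\mathcal A_h(r)\le s$, the condition $\varkappa\mathcal A_h(r)<4\pi$ holds when $\varkappa>0$. Cauchy--Schwarz on the circle gives
\[
 L_g(\partial V_r)^2=\Bigl(\int_0^{2\pi}\sqrt h\,r\,d\theta\Bigr)^2\le 2\pi r\int_0^{2\pi}h\,r\,d\theta=2\pi r\,\mathcal A_h'(r).
\]
Combining this with \eqref{eq:bol} yields
\[
 r\mathcal A_h'\ge \frac{\mathcal A_h(4\pi-\varkappa\mathcal A_h)}{2\pi}.
\]
The model profile $\mathcal A_{\varkappa,s}$ satisfies the same relation with equality. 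This can be checked directly from \eqref{eq:Acum-model}, or read off from $T$ in \eqref{eq:modelclock}, since $\mathcal A_{\varkappa,s}(e^{-ct})=a_{K,s}(t)$.

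\textbf{Step 2: comparison.}
Introduce $\Theta(a)=\frac1{4\pi}\log\frac{a}{4\pi-\varkappa a}$, the antiderivative of $1/G_\varkappa(a)$ with $G_\varkappa(a)=a(4\pi-\varkappa a)$, so that $\Theta'=1/G_\varkappa$. Step 1 then says that $\frac{d}{dr}\Theta(\mathcal A_h(r))\ge \frac{1}{2\pi r}$, with equality for $\mathcal A_{\varkappa,s}$. Integrating from $r$ to $\rho$ and letting $\rho\to1$ gives
\[
 \Theta(\mathcal A_h(r))\le\Theta(s)-\tfrac1{2\pi}\log(1/r),
\]
using $\mathcal A_h(\rho)\to s$ and the continuity of $\Theta$ at $s$, which holds because $\varkappa s<4\pi$. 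The right-hand side equals $\Theta(\mathcal A_{\varkappa,s}(r))$. Since $\Theta$ is strictly increasing on the admissible range, this proves \eqref{eq:cumulative-order}.

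The expected main obstacle is making Steps 1--2 rigorous. First, $\mathcal A_h$ must be shown to be $C^1$ on $(0,1)$, which holds because $h$ is continuous. Second, the limit at the boundary requires no boundary regularity, only $\int_\D h=s$; this is why the integration is anchored at $r=1$ rather than at the center.

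\textbf{Step 3: the mass identity.}
Write $\int_\D f^2h=\int_0^1 f^2\,d\mathcal A_h$, and similarly for $q_{\varkappa,s}$. Integrate by parts on $[\varepsilon,\rho]$. The boundary terms at $\rho\to1$ cancel, because both cumulative areas tend to $s$. At $\varepsilon\to0$ the boundary term vanishes, since $f$ is bounded and both areas tend to $0$. This gives the identity in \eqref{eq:radialmass-order}. Nonnegativity follows because $(f^2)'\ge0$ (as $f\ge0$ is nondecreasing) and $\mathcal A_{\varkappa,s}-\mathcal A_h\ge0$ by Step 2.

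\textbf{Equality case.}
If equality holds and $(f^2)'>0$ almost everywhere, then $\mathcal A_h=\mathcal A_{\varkappa,s}$ on $[0,1]$, by continuity. Hence equality holds in Cauchy--Schwarz for each $r$, so $h$ is radial. Differentiating the cumulative areas then gives $h=q_{\varkappa,s}$.
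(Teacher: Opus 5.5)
Your proposal is correct and follows the same route as the paper. Both arguments combine the local Bol inequality with circular Cauchy--Schwarz to obtain $(\log[\mathcal A_h/(4\pi-\varkappa\mathcal A_h)])'\ge 2/r$, integrate from $r$ up to $r_1\uparrow1$ using the total mass $s$, and integrate by parts with cancelling endpoint terms to get \eqref{eq:radialmass-order}. The equality case is handled the same way too: $\mathcal A_h\equiv\mathcal A_{\varkappa,s}$ collapses the Bol--Cauchy--Schwarz chain. Your observation that $\mathcal A_h\in C^1(0,1)$, because $h$ is continuous, gives Cauchy--Schwarz equality at every radius directly, which slightly shortens the paper's almost-every-radius argument via the rotation derivative.
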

\begin{proof}
On each compact coordinate disk, Lemma~\ref{lem:local-bol} and Cauchy--Schwarz give
\[
 \mathcal A_h(4\pi-\varkappa\mathcal A_h)
 \le L_h(r)^2\le2\pi r\mathcal A_h'(r).
\]
Thus $(\log[\mathcal A_h/(4\pi-\varkappa\mathcal A_h)])'\ge2/r$.
Integrate from $r$ to $r_1<1$ and let $r_1\uparrow1$, using the total mass $s$, to obtain \eqref{eq:cumulative-order}. Fubini gives $\mathcal A_h\in W^{1,1}(0,1)$ with endpoint traces $0,s$, as for the model. Since $f^2$ is absolutely continuous and bounded, integration by parts gives \eqref{eq:radialmass-order}; the equal total masses cancel the endpoint terms. In the equality case put $D(r)=\mathcal A_{\varkappa,s}(r)-\mathcal A_h(r)$. Then $D\ge0$ is continuous and
$\int_0^1(f^2)'D=0$. Since $(f^2)'>0$ almost everywhere, $D$ cannot be positive at any point: otherwise continuity would make it positive on an interval of positive measure. Hence
$\mathcal A_h=\mathcal A_{\varkappa,s}$ on $[0,1]$, and therefore
$\mathcal A_h'=\mathcal A_{\varkappa,s}'$ almost everywhere. Comparing the two endpoints of
\[
 \mathcal A_h(4\pi-\varkappa\mathcal A_h)
 \le L_h(r)^2\le2\pi r\mathcal A_h'(r)
\]
with the model identity shows that both inequalities are equalities for almost every $r$. Equality in the circular Cauchy--Schwarz inequality makes $h(re^{i\theta})$ independent of $\theta$ for almost every $r$ and almost every $\theta$. For each such radius, continuity in $\theta$ upgrades this to constancy on the whole circle. Hence the continuous rotation derivative $(-y\partial_x+x\partial_y)h$, which equals $\partial_\theta h$ for $r>0$, vanishes on circles for almost every radius. By continuity it vanishes throughout $\D$, so $h$ is radial. Finally
$2\pi r h(r)=\mathcal A_h'(r)=\mathcal A_{\varkappa,s}'(r)=2\pi r q_{\varkappa,s}(r)$ almost everywhere, and continuity gives $h=q_{\varkappa,s}$ pointwise. This rigidity uses the curvature hypothesis; equality of cumulative masses alone need not make a density radial.
\end{proof}

\section{Two-pole Green functions and kernel comparison}\label{sec:green}
In either setting, fix the interior conformal map $\Phi:\D\to\Omega$ and set $d\mu=\Phi^*dv_g$. It has mass $A$ and a positive $C^2_{\rm loc}(\D)$ density. For $p=\Phi(a)\ne q=\Phi(b)$, let $\psi_p,\psi_q$ have zero Dirichlet boundary data and the same logarithmic coefficient $(2\pi)^{-1}$, and define
\begin{equation}
 B_a(z)=\frac{z-a}{1-\bar a z},\qquad
 \Xi=\frac{B_a}{B_b},\qquad
 H=\psi_p-\psi_q=-c^{-1}\log|\Xi|,\qquad
 \chi=\frac{\Xi}{|\Xi|}.
 \label{eq:green}
\end{equation}
The last two expressions are understood after pullback. The meromorphic quotient $\Xi$ has one zero and one pole. Values at the poles do not affect area integrals, since the area measure is absolutely continuous.

We first verify the Dirichlet Green-function identity on the physical domain. On the unit disk, set
\[
 G_{\D,a}(z)=-c^{-1}\log|B_a(z)|.
\]
Choose $\zeta_a\in C_c^\infty(\D)$ equal to one near $a$ and put
$\sigma_a(z)=-c^{-1}\zeta_a(z)\log|z-a|$. The logarithmic singularities cancel in
$r_a:=G_{\D,a}-\sigma_a$; the explicit formula for $B_a$ shows that $r_a$ is smooth across $a$, is smooth up to $\partial\D$, and vanishes there. Hence $r_a\in H^1_0(\D)$. Let $d(z)=\operatorname{dist}(z,\partial\D)$ and choose a smooth cutoff $\eta$ with $\eta=0$ on $[0,1]$ and $\eta=1$ on $[2,\infty)$. Since $r_a=O(d)$ and $\nabla r_a$ is bounded near the boundary, the functions
$\widetilde r_{a,\varepsilon}=\eta(d/\varepsilon)r_a$ satisfy
$\|\widetilde r_{a,\varepsilon}-r_a\|_{L^\infty}\to0$ and
$\|\nabla(\widetilde r_{a,\varepsilon}-r_a)\|_{L^2}\to0$; the derivative of the cutoff is supported in a boundary layer of area $O(\varepsilon)$ and multiplies $r_a=O(\varepsilon)$. After mollifying inside $\D$, choose $r_{a,n}\in C_c^\infty(\D)$ converging to $r_a$ both uniformly and in Dirichlet energy. Composition with the interior conformal diffeomorphism preserves that energy, while uniform convergence and $|\Omega|_g=A<\infty$ give $r_{a,n}\circ\Phi^{-1}\to r_a\circ\Phi^{-1}$ in $L^2(\Omega,dv_g)$. Thus $r_a\circ\Phi^{-1}\in H^1_0(\Omega)$. Consequently
\[
 \Psi:=G_{\D,a}\circ\Phi^{-1}
 =\sigma_a\circ\Phi^{-1}+r_a\circ\Phi^{-1}
\]
has exactly the Dirichlet Green class: after subtracting the cutoff logarithmic singularity written in the local conformal coordinate $z=\Phi^{-1}(x)$ near $p=\Phi(a)$, the remainder lies in $H^1_0(\Omega)$. Conformal energy invariance also gives, for every $\varphi\in C_c^\infty(\Omega)$,
\begin{equation}
 \int_\Omega\langle\nabla\Psi,\nabla\varphi\rangle_g\,dv_g
 =\int_\D\nabla G_{\D,a}\cdot\nabla(\varphi\circ\Phi)\,dx\,dy
 =\varphi(\Phi(a)).
 \label{eq:green-dirichlet-identification}
\end{equation}
The integrals are locally finite at the logarithmic pole. These two properties characterize the Dirichlet Green function: if two such representatives have the same source, their difference belongs to $H^1_0(\Omega)$ and is weakly harmonic, so testing by the difference gives zero Dirichlet energy. Thus $\Psi=\psi_p$. The $H^1_0$ condition applies to the regular remainder, not to the logarithmic Green function.

\begin{lemma}[Green branches and unit flux]\label{lem:green}
The sign domains $\Omega_\pm$ are conformal disks, on which $h_+=H$ and $h_-=-H$ are their respective positive Dirichlet Green functions. The maps $\Xi$ and $1/\Xi$ are conformal disk coordinates on the respective branches. Every positive level is regular and compactly contained in $\Omega$, and
\begin{equation}
 \int_{h_j=t}|\nabla h_j|\,ds_g=1\quad(t>0).
 \label{eq:flux}
\end{equation}
The interior zero set $\Gamma$ is nonempty and smooth, with $\nabla H\ne0$.
\end{lemma}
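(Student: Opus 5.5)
Everything transfers to the unit disk through the interior conformal map $\Phi$, since all claims are conformally invariant. After pullback, $H=-c^{-1}\log|\Xi|$ with $\Xi=B_a/B_b$.

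\emph{Sign domains.} We have $\Omega_+=\{H>0\}=\{|\Xi|<1\}$, and this pulls back to $U_+=\{z\in\D:|B_a(z)|<|B_b(z)|\}$. Composing with the disk automorphism $B_b$ reduces $U_+$ to $\{w\in\D:|B_{a'}(w)|<|w|\}$ with $a'=B_b(a)$. Here $|B_{a'}(w)|<|w|$ is the condition $|w-a'|<|w||1-\bar a'w|$, which defines a hyperbolic half-plane of $\D$: the side of the hyperbolic perpendicular bisector of $0$ and $a'$ containing $a'$. So $U_+$ is a hyperbolic half-plane, hence a Jordan disk bounded by an arc $\gamma$ of a circle or line orthogonal to $\partial\D$ together with an arc of $\partial\D$. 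The same holds for $U_-$.

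\emph{Zero set.} The interior zero set is $\Gamma=\Phi(\gamma\cap\D)$. It is a nonempty smooth open arc, since a hyperbolic geodesic always meets the interior of $\D$.

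\emph{Conformal coordinates.} On $\D$, $\Xi$ has exactly one zero, at $a$, and one pole, at $b$, and $|\Xi|=1$ on $\partial\D$ because each Blaschke factor is unimodular there. Restricted to $U_+$, $\Xi$ maps into $\D$, is proper (its modulus tends to $1$ on both $\gamma$ and $\partial\D\cap\overline U_+$), and has a single simple zero. A proper holomorphic map between disks with one simple zero is a finite Blaschke product of degree one, so $\Xi:U_+\to\D$ is biholomorphic. The same argument applied to $1/\Xi$ on $U_-$ gives its biholomorphism.

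\emph{Green functions.} In the coordinate $\zeta=\Xi$ on $\Omega_+$ we get $h_+=-c^{-1}\log|\zeta|$, which is exactly the Dirichlet Green function of the disk with pole $\zeta=0$, i.e.\ at $p$. To pass to the physical branch $\Omega_+$, I would repeat the $H^1_0$ and weak-identity argument used above to identify $\Psi=\psi_p$, with $\Omega_+$ and the map $\Phi\circ(\Xi|_{U_+})^{-1}$ in place of $\Omega$ and $\Phi$.

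\emph{Levels and flux.} For $t>0$, $\{h_+=t\}$ corresponds to the circle $|\zeta|=e^{-ct}<1$, which is compactly contained in the coordinate disk. Its preimage is therefore a smooth closed curve compactly inside $U_+\subset\D$, and so compactly inside $\Omega$. Regularity of these levels follows from $\nabla h_+\neq0$ away from the pole, since $\log|\zeta|$ has nonvanishing gradient. The quantity $|\nabla h|\,ds_g$ is conformally invariant, so the flux can be computed on the circle in the $\zeta$-plane: it equals $c^{-1}r^{-1}\cdot 2\pi r=1$.

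\emph{Gradient along $\Gamma$.} Write $H=-c^{-1}\operatorname{Re}\log\Xi$ locally. Then $|\nabla H|=c^{-1}|\Xi'/\Xi|$ in the $z$-coordinate. Since $\Xi$ is injective on each branch, and on $\gamma$ one checks $\Xi'\ne0$ (the Möbius-type factor $B_a/B_b$ has critical points only where $B_a'B_b=B_aB_b'$), $\nabla H\ne0$ on $\Gamma$.

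\emph{Main obstacle.} The hardest step is verifying $\Xi'\neq0$ on $\gamma$. I would first reduce to $b=0$ and compute directly: $(B_{a'}(w)/w)'$ vanishes only at $w$ with $|w|^2$-type relations located off the bisector. Alternatively, one can apply the Hopf lemma to the positive harmonic function $h_+$ on the smooth boundary arc $\gamma$ of $U_+$, which immediately gives a nonzero normal derivative.
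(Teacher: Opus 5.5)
Your proposal is correct and follows essentially the paper's route: normalize by a disk automorphism, identify the two branches, obtain conformality of $\Xi$ and $1/\Xi$ from properness plus a single simple zero, and compute the unit flux conformally on circles. The differences are in the verification steps. The paper moves the poles to $-r$ and $r$ and gets the half-disk geometry and $(\log\Xi)'\neq0$ in $\D$ from explicit formulas, whereas you use the pseudo-hyperbolic perpendicular bisector and Hopf's lemma. One correction to your direct-computation remark: the two critical points of $\Xi$ are exactly the endpoints of $\overline\gamma$ on $\partial\D$ (the points $\pm i$ in the paper's normalization), not points off the bisector; this is harmless because they do not lie in $\D$.
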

\begin{proof}
A disk automorphism sends the poles to $-r,r$, $0<r<1$, changing the quotient only by a spatially constant unit factor. This normalization is used only for the geometry; the parameter family retains the phase fixed in \eqref{eq:green}. In that coordinate,
\[
 (\log(B_{-r}/B_r))'
 =-\frac{2r(1-r^2)(1+z^2)}{(z^2-r^2)(1-r^2z^2)}.
\]
Its only zeros are the boundary points $\pm i$. For $z=x+iy$, subtraction gives
\[
 |B_{-r}|^2-|B_r|^2=
 \frac{4rx(r^2-1)(|z|^2-1)}
 {(1-2rx+r^2|z|^2)(1+2rx+r^2|z|^2)}.
\]
The denominator is positive, so the zero set is the imaginary diameter and the two branches are half-disks. On the positive branch, $\Xi$ is a proper holomorphic map onto $\D$, has modulus one on both boundary arcs and exactly one simple zero. Its degree is one, hence it is conformal onto $\D$; the negative branch uses $1/\Xi$. Thus, with branch coordinate $\zeta=\Xi$ on $\Omega_+$ and $\zeta=1/\Xi$ on $\Omega_-$, one has $h_j=-c^{-1}\log|\zeta|$ and $\{h_j>t\}=\{|\zeta|<e^{-ct}\}$. Since the product $|\nabla h_j|_g\,ds_g$ is unchanged by a conformal change of metric in dimension two, on the level $|\zeta|=r$,
\[
 \int_{h_j=t}|\nabla h_j|_g\,ds_g
 =\int_0^{2\pi}\frac1{cr}r\,d\theta=\frac{2\pi}{c}=1.
\]
All these level-set computations take place in the interior.
\end{proof}

Write $s_j=|\Omega_j|$; then $s_++s_-=A$. Figure~\ref{fig:green-branches} illustrates the coordinate geometry and distinguishes the two types of branch-boundary arcs.

\begin{figure}[htbp]
\centering
\begin{tikzpicture}[x=0.94cm,y=0.94cm,font=\small,>=Stealth]
  \def\rad{1.12}
  \pgfmathsetmacro{\th}{acos(-7/25)}
  \begin{scope}[shift={(2.05,0)}]
    \draw[semithick] (-\th:\rad) arc[start angle=-\th,end angle=\th,radius=\rad];
    \draw[semithick,dashed] (\th:\rad) arc[start angle=\th,end angle=360-\th,radius=\rad];
    \foreach \r in {0.35,0.66,0.90}{\draw[thin,gray] (0,0) circle (\r);}
    \foreach \ang in {\th,-\th}{\fill (\ang:\rad) circle (1.3pt);}
    \fill (0,0) circle (1.1pt);
    \node[above] at (0,1.72) {Positive branch coordinate};
    \node[below,align=center] at (0,-1.25) {$\zeta_+=\Xi$\quad(weighted area $s_+$)\\$\chi=e^{i\theta_+}$};
  \end{scope}
  \begin{scope}[shift={(8.1,0)}]
    \draw[semithick] (0,0) circle (1.20);
    \draw[semithick,dashed] (0,-1.20)--(0,1.20);
    \fill (-0.60,0) circle (1.6pt) node[below=3pt] {$-r$};
    \fill (0.60,0) circle (1.6pt) node[below=3pt] {$r$};
    \node[font=\footnotesize] at (-0.56,0.43) {$H>0$};
    \node[font=\footnotesize] at (0.56,0.43) {$H<0$};
    \node[fill=white,inner sep=1pt] at (0,0.0) {$\Gamma$};
    \node[above] at (0,1.72) {Auxiliary disk};
    \node[above right] at (0,1.20) {$i$};
    \node[below right] at (0,-1.20) {$-i$};
    \node[below] at (0,-1.62) {$r=\tfrac12$ (drawing only)};
  \end{scope}
  \begin{scope}[shift={(14.10,0)}]
    \draw[semithick] (-\th:\rad) arc[start angle=-\th,end angle=\th,radius=\rad];
    \draw[semithick,dashed] (\th:\rad) arc[start angle=\th,end angle=360-\th,radius=\rad];
    \foreach \r in {0.35,0.66,0.90}{\draw[thin,gray] (0,0) circle (\r);}
    \foreach \ang in {\th,-\th}{\fill (\ang:\rad) circle (1.3pt);}
    \fill (0,0) circle (1.1pt);
    \node[above] at (0,1.72) {Negative branch coordinate};
    \node[below,align=center] at (0,-1.25) {$\zeta_-=1/\Xi$\quad(weighted area $s_-$)\\$\chi=e^{-i\theta_-}$};
  \end{scope}
  \draw[->] (6.55,0.10)--node[above] {$\Xi|_{\{H>0\}}$}(3.45,0.10);
  \draw[->] (9.65,0.10)--node[above] {$(1/\Xi)|_{\{H<0\}}$}(12.70,0.10);
  \draw[semithick,dashed] (2.1,-2.62)--(2.85,-2.62);
  \node[right] at (2.95,-2.62) {interior zero arc / its image};
  \draw[semithick] (9.25,-2.62)--(10,-2.62);
  \node[right] at (10.1,-2.62) {outer boundary arc / its image};
  \node at (8.1,-3.18) {$s_++s_-=A$; equal weighted areas are not assumed.};
\end{tikzpicture}
\caption{Two-pole coordinates on a geometrically normalized auxiliary disk. Each target boundary is divided into the image of the interior zero arc (dashed) and the image of an outer boundary arc (solid). The gray concentric circles are positive Green levels $h_j=-c^{-1}\log|\zeta_j|=t>0$. The coordinate disks carry generally different weights. Writing $\zeta_j=|\zeta_j|e^{i\theta_j}$, corresponding interface points satisfy $\zeta_-=\overline{\zeta_+}$ and $\theta_-=-\theta_+$ modulo $2\pi$. The arrows are the conformal restrictions to the indicated sign branches, not maps of the entire auxiliary disk and not isometries.}
\label{fig:green-branches}
\end{figure}

Put
\begin{gather}
 a_j(t)=|\{h_j>t\}|,\qquad
 B_j(t)=\int_{h_j=t}|\nabla h_j|^{-1}\,ds_g,
 \qquad T_j=a_j^{-1},\quad G_j(a)=B_j(T_j(a)),\label{eq:branch}\\
 a_j'=-B_j,\qquad
 T_j(a)=\int_a^{s_j}\frac{ds}{G_j(s)}.\label{eq:clock}
\end{gather}
The inverse $T_j$ expresses the Green level in the superlevel-area variable; $a_j'=-B_j$ and $T_j'=-1/G_j$. We call $T_j$ the Green-time coordinate.

\Needspace{12\baselineskip}
Interior regularity follows directly in branch coordinates. Write $g=\rho_j(\zeta)|d\zeta|^2$ there, with $\rho_j>0$ and $\rho_j\in C^2_{\rm loc}(\D)$, and put $r=e^{-ct}$. Then
\begin{equation}
 a_j(t)=\int_{|\zeta|<r}\rho_j(\zeta)\,dA_{\rm E}(\zeta),
 \qquad
 B_j(t)=c r^2\int_0^{2\pi}\rho_j(re^{i\theta})\,d\theta>0.
 \label{eq:LL-coarea-regularity}
\end{equation}
Here $dA_{\rm E}$ is planar Lebesgue area. Differentiation on compact positive-time intervals gives
\[
 B_j\in C^2_{\rm loc}(0,\infty),\quad
 a_j\in C^3_{\rm loc}(0,\infty),\quad
 T_j\in C^3_{\rm loc}(0,s_j),\quad
 G_j\in C^2_{\rm loc}(0,s_j),
\]
where inverse-function regularity uses $a_j'=-B_j<0$. No such extension to $t=0$ or $a=s_j$ is asserted. The zero level has zero area, so $a_j(0+)=s_j$, while $a_j(t)\to0$ as $t\to\infty$. Coarea on compact time intervals, followed by these limits, gives
\begin{equation}
 d(H_*dv_g)(t)=b_H(t)\,dt,\qquad
 b_H(t)=\begin{cases}B_+(t),&t>0,\\ B_-(-t),&t<0,\end{cases}
 \quad \int_0^\infty B_j(t)\,dt=s_j.
 \label{eq:green-time-density}
\end{equation}
There is no residual mass at zero or the poles. The densities may be unbounded near zero but are integrable.

For the geometric comparison, write $\ell_j(t)=\operatorname{Length}_g\{h_j=t\}$. Cauchy--Schwarz, unit flux and Lemma~\ref{lem:local-bol} on each compact superlevel disk yield
\begin{equation}
 B_j(t)\ge \ell_j(t)^2\ge a_j(t)(4\pi-Ka_j(t)),
 \qquad G_j(a)\ge G_K(a).
 \label{eq:order}
\end{equation}
All model coefficients are positive on the relevant intervals. The coarea-coefficient excess has the decomposition
\[
 B_j(t)-G_K(a_j(t))
 =[B_j(t)-\ell_j(t)^2]+[\ell_j(t)^2-G_K(a_j(t))].
\]
These are the nonnegative Cauchy--Schwarz and Bol comparison deficits, respectively.

\subsection{Admissible trial functions and energy isotropy}
Equip the real space $\mathcal H=H^1(\mathbb R;\mathbb R)$ with squared norm
\begin{equation}
 \mathcal N(u)=\int_{\mathbb R}(u'^2+c^2u^2)\,dt,
 \qquad |u(t)|^2\le\mathcal N(u)/(2c).
 \label{eq:norm}
\end{equation}
For smooth compactly supported $u$, the evaluation bound follows from
$\int_{-\infty}^t(u'-cu)^2+\int_t^\infty(u'+cu)^2=\mathcal N(u)-2c|u(t)|^2$.
Density extends it to the continuous representatives of all $H^1$ functions.

\Needspace{12\baselineskip}
\begin{lemma}[Complex trial functions and their two real components]\label{lem:energy}
For real $u\in\mathcal H$ and a distinct interior Green pair,
\begin{equation}
 F=u(H)\chi\in H^1(\Omega;\mathbb C),\quad
 M(F)=\int_\Omega u(H)^2\,dv_g,\quad E(F)=\mathcal N(u),
 \label{eq:energy}
\end{equation}
and, writing $F_1=\Re F$, $F_2=\Im F$,
\begin{equation}
 Q_F:=\left(\int_\Omega\langle\nabla F_i,\nabla F_j\rangle\,dv_g\right)_{i,j=1}^2
 =\frac{\mathcal N(u)}2I_2.
 \label{eq:green-energy-isotropy}
\end{equation}
For $u\ne0$ the real components are linearly independent in $L^2$.
\end{lemma}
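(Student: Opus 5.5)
The plan is to compute everything in the branch coordinates of Lemma~\ref{lem:green}, where $H$ and $\chi$ are explicit, and to use conformal invariance of the Dirichlet energy. On $\Omega_+$ write $\zeta=\Xi=re^{i\theta}$, so that $H=-c^{-1}\log r$ and $\chi=e^{i\theta}$. With $t=H$ we have $F=u(t)e^{i\theta}$. In dimension two $|\nabla F|_g^2\,dv_g=|\nabla_{\rm E}F|^2dA_{\rm E}$, and on $\Omega_+$
\[
|\nabla_{\rm E}F|^2=\frac{u'(t)^2}{c^2r^2}+\frac{u(t)^2}{r^2}.
\]
Since $dt=-dr/(cr)$, we get $r\,dr\,d\theta=c\,r^2\,dt\,d\theta$, and integrating gives
\[
\int_{\Omega_+}|\nabla F|^2=\int_0^\infty\!\!\int_0^{2\pi}\Bigl(\frac{u'^2}{c^2}+u^2\Bigr)c\,d\theta\,dt=\int_0^\infty(u'^2+c^2u^2)\,dt,
\]
using $c=2\pi$ and $2\pi c/c^2=1$. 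The negative branch, with $\zeta=1/\Xi$, $h_-=-H$ and $\chi=e^{-i\theta_-}$, gives the same expression on $(-\infty,0)$ in the variable $t=H$. Summing the two branches yields $E(F)=\mathcal N(u)$, because $\Gamma$ and the poles have measure zero. The mass identity is immediate from $|\chi|=1$.

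For isotropy, separate the real components. Write $F_1=u\cos\theta$ and $F_2=u\sin\theta$ in branch polar coordinates. The radial and angular gradient parts are orthogonal, and the cross terms are
\[
\int\cos\theta\sin\theta\,(u'^2/c^2+u^2)\quad\text{and}\quad\int(\cos^2\theta-\sin^2\theta)(\cdots),
\]
both of which vanish after $\theta$-integration because the level sets are full circles with $\theta$-independent weight in the Euclidean coordinate. The same holds on the negative branch, where only $\theta\mapsto-\theta$ changes, which does not affect these integrals. This gives $Q_F=\tfrac12\mathcal N(u)I_2$.

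The main obstacle is membership in $H^1(\Omega)$, since $\chi$ is singular at the poles and the branch coordinates degenerate at $\Gamma$ and at $\partial\Omega$, and $H$ is unbounded near the poles. The plan has three parts.
\begin{enumerate}
\item \emph{Across $\Gamma$.} $\chi=\Xi/|\Xi|$ is smooth there, because $\Xi\ne0,\infty$ on $\Gamma$. Since $H$ is smooth, $F$ is locally Lipschitz away from the poles.
\item \emph{Near the poles.} First take $u\in C_c^\infty(\mathbb R)$, so that $F$ vanishes near both poles, where $|H|\to\infty$. Then $F$ is locally Lipschitz in the interior with finite energy $\mathcal N(u)$ and bounded values, hence lies in $H^1_{\rm loc}$ with finite global $L^2$ and energy norms. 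By the definition of $H^1(\Omega)$ as functions with $L^2$ gradient, nothing is needed at $\partial\Omega$.
\item \emph{General $u$.} Approximate $u\in\mathcal H$ by $u_n\in C_c^\infty$ in $\mathcal N$. The energy identity applied to $u_n-u_m$ makes $F_n$ Cauchy in energy. Mass is controlled by the evaluation bound \eqref{eq:norm}, namely $M(F)\le A\,\mathcal N(u)/(2c)$. Hence $F_n$ is Cauchy in $H^1$, the limit equals $u(H)\chi$ almost everywhere, and the identities pass to the limit.
\end{enumerate}

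Finally, for linear independence with $u\ne0$, suppose $\alpha F_1+\beta F_2=0$ almost everywhere with $(\alpha,\beta)\ne0$. This reads $u(t)\cos(\theta-\phi)=0$ on a set where $u(t)\ne0$, which has positive measure in $t$. Because $H_*dv_g$ has a positive density $b_H$ by \eqref{eq:green-time-density}, each such level carries a full circle with positive weight, on which $\cos(\theta-\phi)\not\equiv0$. This is a contradiction.
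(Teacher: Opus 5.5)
Your proposal is correct and follows essentially the same route as the paper. Both compute the energy and the angular isotropy in branch coordinates (the paper writes the energy step through the one-form $\alpha=-i\overline\chi\,d\chi$, coarea and unit flux, which is your polar computation in invariant form), obtain $H^1$ membership from $C_c^\infty$ profiles, and pass to general $u$ by $\mathcal N$-approximation controlled by \eqref{eq:norm}. The paper gets linear independence more directly from $Q_F=(\mathcal N(u)/2)I_2$, since a real combination vanishing in $L^2$ would have zero energy. One harmless slip: the mixed integrand is $\cos\theta\sin\theta\,(u'^2/c^2-u^2)$, with the minus sign coming from $\partial_\theta F_1\,\partial_\theta F_2=-u^2\sin\theta\cos\theta$, but it still integrates to zero in $\theta$.
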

\begin{proof}
\emph{Compactly supported profiles.} For $u\in C_c^\infty(\mathbb R)$, the globally defined $u(H)\chi$ is smooth away from the poles and vanishes in their neighborhoods. It is smooth across $\Gamma$ because $\Xi$ is nonzero there. On the punctured domain use the real one-form $\alpha=-i\overline\chi\,d\chi$. Locally $\alpha=d\arg\Xi$, so Cauchy--Riemann gives $\langle dH,\alpha\rangle_g=0$ and $|\alpha|_g=c|dH|_g$. Consequently
\[
 dF=\chi\bigl(u'(H)dH+i u(H)\alpha\bigr),\qquad
 |\nabla F|^2=(u'(H)^2+c^2u(H)^2)|\nabla H|^2.
\]
Coarea and unit flux give \eqref{eq:energy}. Since $|F|\le\|u\|_\infty$ and the area is finite, the local construction lies in the global form domain of Lemma~\ref{lem:LL-form}. Both branches use the same $\chi$ and its globally defined one-form $\alpha$, whose periods around the poles are $c$ and $-c$.
\emph{Angular energy.} On the two conformal branch disks the fields are
$g_+(r)e^{i\theta}$ and $g_-(r)e^{-i\theta}$, with
$g_\pm(r)=u(\pm[-c^{-1}\log r])$. For either radial factor,
\[
 E(g\cos\theta)=E(g\sin\theta)
 =\pi\int_0^1(rg'^2+g^2/r)\,dr,
 \qquad E(g\cos\theta,g\sin\theta)=0.
\]
Thus each branch has a Dirichlet-energy matrix that is a scalar multiple of the identity, even when $g_+\ne g_-$. Summing the two branch contributions preserves this property.

\emph{General profiles.} Choose $u_n\in C_c^\infty(\mathbb R)$ converging to $u$ in $\mathcal H$. By \eqref{eq:norm}, the squared mass norm of a trial-function difference is at most $A\|u_n-u_m\|_\infty^2\le A\mathcal N(u_n-u_m)/(2c)$, and its Dirichlet energy is $\mathcal N(u_n-u_m)$. Hence the trial functions converge in the physical form norm, and the matrix identities pass to the limit. A nonzero real linear combination vanishing in $L^2$ would have zero energy, contradicting $Q_F=(\mathcal N(u)/2)I_2$ for $u\ne0$.
\end{proof}

\subsection{The abstract kernel and its minimizer}
For a finite positive measure space $(X,\mu)$ and an almost everywhere finite measurable $h:X\to\mathbb R$, set
\begin{equation}
 D_h(u)=\int_Xu(h)^2\,d\mu,\qquad
 \kappa(h)=\inf_{u\in\mathcal H,\ D_h(u)>0}\frac{\mathcal N(u)}{D_h(u)}.
 \label{eq:kappa}
\end{equation}
Write $\rho_h=h_*\mu$ for the time measure. Since $D_h(u)=\int_{\mathbb R}u^2\,d\rho_h$, the quotient depends only on this marginal. The signed Green function corresponds to $h=H$. Under $u_+(t)=u(t)$ and $u_-(t)=u(-t)$ for $t>0$, its form domain is exactly
\begin{equation}
 \{(u_+,u_-)\in H^1(0,\infty)^2:u_+(0)=u_-(0)\}.
 \label{eq:joint-form-domain}
\end{equation}
Indeed, two half-line functions in $H^1(0,\infty)$ define an $H^1(\mathbb R)$ function after reflection precisely when their traces at $0$ agree; a nonzero trace jump would produce a Dirac mass in the distributional first derivative. No derivative condition is imposed on trial profiles. Such conditions arise only from the optimizing equation; independent branch minimizations define a different problem.

\begin{proposition}[Positive kernel and normalized minimizing profile]\label{prop:kernel}
Assume $0<\mu(X)<\infty$. The operator
\begin{equation}
 R_c(t,s)=\frac{e^{-c|t-s|}}{2c},\qquad
 (\mathcal T_h f)(x)=\int_X R_c(h(x),h(y))f(y)\,d\mu(y)
 \label{eq:kernel}
\end{equation}
is compact, self-adjoint and positive semidefinite. Its strictly positive integral kernel also makes it positivity improving; this does not assert injectivity. Its largest eigenvalue $\Lambda(h)>0$ is simple and has a unique positive unit eigenfunction $e_h$. Moreover
\begin{equation}
 \kappa(h)=\Lambda(h)^{-1},\qquad
 u_h(t)=\kappa(h)\int_XR_c(t,h(y))e_h(y)\,d\mu(y)
 \label{eq:profile}
\end{equation}
satisfies $u_h\in\mathcal H$, $u_h\circ h=e_h$ in $L^2(\mu)$, $D_h(u_h)=1$, $\mathcal N(u_h)=\kappa(h)$, and $u_h(t)>0$. Every nonzero real minimizer is a scalar multiple of $u_h$.
\end{proposition}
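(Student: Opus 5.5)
The plan is to identify $R_c$ as the resolvent kernel of $-d^2/dt^2+c^2$ on $\mathbb R$. For $F\in L^2(\mathbb R)$, the function $w(t)=\int R_c(t,s)F(s)\,ds$ is the unique $H^1$ solution of $-w''+c^2w=F$. Equivalently, for every $v\in\mathcal H$,
\[
 \int_{\mathbb R}(v'w'+c^2vw)\,dt=\int_{\mathbb R}vF\,dt .
\]
Since $\mathcal T_h$ depends on $h$ only through the time measure $\rho_h$, we transfer it to $L^2(\rho_h)$, where the kernel $R_c(t,s)$ is bounded by $1/(2c)$. Because $\rho_h$ is a finite measure, the kernel lies in $L^2(\rho_h\otimes\rho_h)$, so the operator is Hilbert--Schmidt and hence compact. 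It is self-adjoint because $R_c$ is symmetric. The same boundedness makes $\mathcal T_h$ Hilbert--Schmidt on $L^2(\mu)$ as well, since $\mu(X)<\infty$.

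\emph{Positive semidefiniteness.} We use the Fourier identity $R_c(t,s)=\frac1{2\pi}\int e^{i\xi(t-s)}(\xi^2+c^2)^{-1}\,d\xi$. This gives
\[
 \langle\mathcal T_hf,f\rangle
 =\frac1{2\pi}\int\frac{|\widehat{\nu_f}(\xi)|^2}{\xi^2+c^2}\,d\xi\ge0,
\]
where $\nu_f=h_*(f\mu)$ is a finite signed measure on $\mathbb R$. The trace $\int R_c(h,h)\,d\mu=\mu(X)/(2c)>0$ is positive, so $\mathcal T_h\ne0$ and $\Lambda(h)=\|\mathcal T_h\|>0$. Positivity improvement is immediate from $R_c>0$: for $f\ge0$ not a.e.\ zero, $\mathcal T_hf>0$ everywhere. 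The Krein--Rutman/Jentzsch theorem for compact positivity-improving operators then gives that $\Lambda(h)$ is simple with a positive eigenfunction. Concretely, if $e$ is a top eigenfunction, then $|e|$ is also one by the variational characterization. Then $\Lambda|e|=\mathcal T_h|e|\ge|\mathcal T_he|=\Lambda|e|$ with equality only if $e$ has one sign. So every top eigenfunction is one-signed and strictly positive, and two orthogonal positive functions cannot exist. Normalizing gives the unique $e_h$.

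\emph{The variational identity $\kappa=\Lambda^{-1}$.} For $u\in\mathcal H$ and $f\in L^2(\mu)$, set $w_f(t)=\int R_c(t,h(y))f(y)\,d\mu(y)$. This is the $\mathcal H$-Riesz representative of the functional $v\mapsto\int v(h)f\,d\mu$. That functional is bounded by the evaluation bound \eqref{eq:norm}, since $|\int v(h)f\,d\mu|\le\|v\|_\infty\mu(X)^{1/2}\|f\|_{L^2(\mu)}$. The identity $\langle v,w_f\rangle_{\mathcal N}=\int v(h)f\,d\mu$ can be checked first for smooth $v$ via the resolvent equation in the distributional sense, with $F$ replaced by the measure $\nu_f$. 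Then:
\begin{itemize}
 \item $\mathcal N(w_f)=\langle\mathcal T_hf,f\rangle$.
 \item Cauchy--Schwarz in $\mathcal H$ gives $\bigl(\int u(h)f\,d\mu\bigr)^2\le\mathcal N(u)\langle\mathcal T_hf,f\rangle$.
 \item Taking $f=u\circ h$ yields $D_h(u)^2\le\mathcal N(u)\langle\mathcal T_h(u\circ h),u\circ h\rangle\le\mathcal N(u)\Lambda D_h(u)$, hence $\kappa\ge\Lambda^{-1}$.
 \item Conversely, take $u_h=\kappa'w_{e_h}$ with $\kappa'=\Lambda^{-1}$. Then $u_h\circ h=\kappa'\mathcal T_he_h=e_h$, so $D_h(u_h)=1$, and $\mathcal N(u_h)=\kappa'^2\langle\mathcal T_he_h,e_h\rangle=\kappa'$. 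Hence $\kappa\le\Lambda^{-1}$, which gives \eqref{eq:profile}.
\end{itemize}
Positivity $u_h(t)>0$ follows from $R_c>0$ and $e_h>0$.

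\emph{Uniqueness of minimizers.} Equality $\mathcal N(u)=\kappa D_h(u)$ forces equality in both inequalities above with $f=u\circ h$. First, $u\circ h$ must be a top eigenfunction, so $u\circ h=\alpha e_h$. Second, equality in Cauchy--Schwarz forces $u=\gamma w_{u\circ h}$. Together these give $u=\gamma\alpha w_{e_h}$, a multiple of $u_h$.

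The main obstacle is making the Riesz-representation identity rigorous when $\rho_h$ may be singular (for example with atoms). I would handle it by checking $\langle v,w_f\rangle_{\mathcal N}=\int v\,d\nu_f$ for $v\in C_c^\infty$ by Fubini and the pointwise identity $(-\partial_t^2+c^2)R_c(\cdot,s)=\delta_s$, and then extending to all of $\mathcal H$ by density using \eqref{eq:norm}.
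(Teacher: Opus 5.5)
Your proof is correct, and its core matches the paper's. Your $w_f$ is the paper's $J_h^*f$. Your identity $\langle v,w_f\rangle_{\mathcal N}=\int v(h)f\,d\mu$ is the factorization $\mathcal T_h=J_hJ_h^*$. Your two Cauchy--Schwarz bullets are a hand proof of $\kappa(h)^{-1}=\|J_h\|^2=\|J_hJ_h^*\|=\Lambda(h)$.

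The routes differ in two places.
\begin{itemize}
\item \emph{Positive semidefiniteness.} You use the Fourier representation of $R_c$. The paper gets it directly from $\mathcal T_h=J_hJ_h^*$. Your own Riesz identity already supplies that factorization, so the Fourier step is correct but redundant.
\item \emph{Uniqueness of minimizers.} This is the more substantive difference. You use the equality cases of Cauchy--Schwarz in $\mathcal H$ and of the Rayleigh quotient, plus simplicity of $\Lambda(h)$. The paper instead proves the ground-state square identity $\mathcal N(v)-\kappa(h)D_h(v)=\int(v'-q_hv)^2\,dt$ with $q_h=(\log u_h)'$. That identity needs the kernel Harnack bound $|q_h|\le c$, the $BV_{\rm loc}$ structure of $q_h$ from \eqref{eq:measureode}, and an approximation to reach atomic time measures.
\end{itemize}
Your argument is shorter and purely Hilbert-space theoretic. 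The paper's version also yields an explicit remainder and records \eqref{eq:measureode} for later use; your Riesz identity with $f=\kappa e_h$ gives \eqref{eq:measureode} as well.

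On the step you flagged as the main obstacle, the cleanest closure is the reproducing identity $\langle u,R_c(\cdot,s)\rangle_{\mathcal H}=u(s)$, which follows from the derivative jump $-1$ at $t=s$. It identifies the abstract Riesz representative with $w_f$ pointwise, so atoms of $\rho_h$ cause no difficulty. Your Fubini-plus-density route also works, but density requires knowing $w_f\in H^1$ first. This holds because $w_f$ is $R_c$ convolved with the finite signed measure $\nu_f$, so $w_f,w_f'\in L^1\cap L^\infty$.

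Two minor loose points:
\begin{itemize}
\item Calling $\int R_c(h,h)\,d\mu$ the trace of the integral operator is not automatic on a general measure space, since the diagonal may be null. Either $\langle\mathcal T_h1,1\rangle>0$ or positivity improvement gives $\Lambda(h)>0$ directly.
\item Passing to $L^2(\rho_h)$ is not a unitary equivalence, because $\mathcal T_h$ vanishes on those $f$ with $h_*(f\mu)=0$; only the nonzero spectra agree. This is harmless, since you then work on $L^2(\mu)$ directly.
\end{itemize}
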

\begin{proof}
The bounded symmetric kernel is Hilbert--Schmidt. Its derivative jump at $t=s$ is $-1$, so
$\langle u,R_c(\cdot,s)\rangle_{\mathcal H}=u(s)$.
The bounded map $J_hu=u\circ h$ therefore has adjoint
$J_h^*f=\int_XR_c(\cdot,h(y))f(y)\,d\mu(y)$, a Bochner integral in the separable space $\mathcal H$. Thus $\mathcal T_h=J_hJ_h^*\ge0$.
\emph{The principal eigenfunction.} The constant $1\in L^2(\mu)$ gives $\langle1,\mathcal T_h1\rangle_{L^2(\mu)}>0$, hence $\Lambda>0$. Replacing a sign-changing eigenfunction for the largest eigenvalue by its absolute value strictly increases its Rayleigh quotient, since the kernel is strictly positive almost everywhere. The eigenvalue equation then gives strict positivity. Two independent eigenfunctions for the largest eigenvalue would give one orthogonal to a positive one, hence a sign-changing eigenfunction for that eigenvalue. This proves simplicity.

\emph{The minimizing profile.} The variational definition gives
\[
 \kappa(h)^{-1}
 =\sup_{u\in\mathcal H\setminus\{0\}}\frac{D_h(u)}{\mathcal N(u)}
 =\|J_h\|^2=\|J_hJ_h^*\|=\Lambda(h).
\]
Substituting $u_h=\kappa J_h^*e_h$ yields $J_hu_h=e_h$ and $\mathcal N(u_h)=\kappa$; the integral is positive at every finite $t$. Distributional testing gives
\begin{equation}
 -u_h''+c^2u_h=\kappa(h)u_h\,d\rho_h.
 \label{eq:measureode}
\end{equation}
\emph{Equality in the variational problem.} A ground-state square identity characterizes the minimizing direction. The kernel triangle inequality gives
$e^{-c|t-s|}u_h(s)\le u_h(t)\le e^{c|t-s|}u_h(s)$.
Thus $q_h=(\log u_h)'$ satisfies $|q_h|\le c$ almost everywhere. Locally $u_h''$ is a finite measure, so $q_h\in BV_{\rm loc}$ and
$Dq_h=(c^2-q_h^2)\,dt-\kappa(h)\,d\rho_h$.
For $v\in C_c^\infty(\mathbb R;\mathbb R)$, integration by parts against this measure gives
\begin{equation}
 \mathcal N(v)-\kappa(h)D_h(v)
 =\int_{\mathbb R}(v'-q_hv)^2\,dt.
 \label{eq:ground-state-square}
\end{equation}
Since $q_h$ is bounded and $\rho_h$ finite, $H^1$ approximation and \eqref{eq:norm} extend the identity to every real $v\in\mathcal H$, including atomic time measures. Equality holds exactly when $(v/u_h)'=0$ locally, hence when $v$ is a scalar multiple of $u_h$.

\end{proof}
For real $f\in L^2(\mu)$, the kernel of $\mathcal T_h$ is characterized by
\[
 \mathcal T_hf=0\quad\Longleftrightarrow\quad h_*(f\mu)=0.
\]
Indeed $\langle f,\mathcal T_hf\rangle=\|J_h^*f\|_{\mathcal H}^2$, and testing $J_h^*f$ against $C_c^\infty(\mathbb R)$ identifies the finite signed pushforward; the converse follows from the kernel formula. Here $R_c$ is the Green kernel of $-d^2/dt^2+c^2$ on $\mathbb R$ with form $\mathcal N$, not of the physical Neumann Laplacian.

By \eqref{eq:measureode}, $u_h''$ is a locally finite signed measure, so $u_h'$ has a $BV_{\rm loc}$ representative with one-sided traces. At an atom $b_0=\rho_h(\{0\})$,
\begin{equation}
 u_h'(0+)-u_h'(0-)=-\kappa(h)b_0u_h(0).
 \label{eq:atomic-interface}
\end{equation}
In the half-line coordinates of \eqref{eq:joint-form-domain}, $u_+'(0)=u_h'(0+)$ and $u_-'(0)=-u_h'(0-)$: the derivatives point from the interface into their respective branches, not along the exterior normals of the half-lines. Their sum is $-\kappa b_0u_h(0)$. These derivative conditions follow from the optimizing equation; the form domain itself requires only a common function trace.

For distinct interior Green poles and for model branches one has $b_0=0$, so the two inward half-line derivatives sum to zero. At a coalescing-pole parameter, an atom at $0$ may occur and the derivative sum need not vanish. Atomlessness alone leaves open the possibility of a singular continuous part in $u_h''$.

For $h=H$, \eqref{eq:green-time-density} gives $u_H\in W^{2,1}_{\rm loc}(\mathbb R)$ with locally absolutely continuous first derivative. The flat-disk example in Appendix~\ref{app:regularity} has logarithmically unbounded time density and shows that $C^2$ regularity at zero can fail.

\begin{lemma}[positive weak solutions and the unique positive minimizer]\label{lem:positive-ground-state}
In the setting of Proposition~\ref{prop:kernel}, suppose that $u\in\mathcal H$ is strictly positive and satisfies
\[
 -u''+c^2u=\lambda u\,d\rho_h
\]
in distributions, with $\lambda>0$. Then $\lambda=\kappa(h)$ and $u$ is a positive scalar multiple of $u_h$.
\end{lemma}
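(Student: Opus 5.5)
The plan is to reuse the ground-state square identity \eqref{eq:ground-state-square}, this time with $u$ in place of $u_h$. Since $u\in\mathcal H$ is continuous and strictly positive, the first step is to set $q=(\log u)'=u'/u$. The measure equation shows that $u''$ is a locally finite signed measure. Hence $u'\in BV_{\rm loc}$, and since $u$ is locally bounded below by a positive constant, $q\in BV_{\rm loc}\cap L^\infty_{\rm loc}$ with
\[
 Dq=(c^2-q^2)\,dt-\lambda\,d\rho_h .
\]
This uses the BV chain rule for $u'/u$, with $u$ continuous so that no jump terms arise from $u$ itself. For $v\in C_c^\infty(\mathbb R)$, expanding $\int(v'-qv)^2$ and integrating $-2\int qvv'=\int v^2\,dDq$ then gives
\[
 \mathcal N(v)-\lambda D_h(v)=\int_{\mathbb R}(v'-qv)^2\,dt\ \ge0 .
\]
This is valid for compactly supported test functions, because only local bounds on $q$ are needed.

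Next I would extend the inequality $\mathcal N(v)\ge\lambda D_h(v)$ to all $v\in\mathcal H$ by density. Compactly supported smooth functions are dense in $\mathcal H$, and $D_h$ is continuous on $\mathcal H$ because of the evaluation bound \eqref{eq:norm} and $\rho_h(\mathbb R)=\mu(X)<\infty$. Taking the infimum gives $\kappa(h)\ge\lambda$. For the reverse inequality, I would test the weak equation against $u$ itself, which requires care because $u$ is not compactly supported. I would take cutoffs $\eta_R u$ with $\eta_R=\eta(\cdot/R)$ and pass to the limit using $u\in H^1$: the cross terms $\int u'u\eta_R'$ tend to zero, and $\int u^2\eta_R\,d\rho_h\to D_h(u)$ by dominated convergence, since $u$ is bounded. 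This yields $\mathcal N(u)=\lambda D_h(u)$. Because $u>0$, we have $D_h(u)>0$, so $\kappa(h)\le\lambda$. Hence $\lambda=\kappa(h)$, and $u$ is a nonzero real minimizer.

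The last clause of Proposition~\ref{prop:kernel} says that every nonzero real minimizer is a scalar multiple of $u_h$. So $u=\alpha u_h$, and $\alpha>0$ because both $u$ and $u_h$ are positive. Alternatively, one can avoid that clause: the identity for $u_h$ with $q_h$ and equality at $v=u$ give $(u/u_h)'=0$ directly.

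I expect the main obstacle to be the rigorous derivation of $Dq$ when $\rho_h$ may have atoms or a singular continuous part. One has to check that the product and quotient rules for $BV$ functions hold here. They do, because $u$ is continuous, so $D(u'/u)=Du'/u-(u')^2/u^2\,dt$ with no extra jump term. Ideally, though, this step can be bypassed by the testing argument, which uses only the weak equation, $H^1$ regularity, and finiteness of $\rho_h$. That argument gives $\mathcal N(u)=\lambda D_h(u)$ and hence $\lambda\ge\kappa(h)$. For $\lambda\le\kappa(h)$, I would test the equation for $u$ against $u_h$ and the equation \eqref{eq:measureode} for $u_h$ against $u$; subtracting gives $(\lambda-\kappa)\int uu_h\,d\rho_h=0$, and the integral is positive. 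This symmetric bilinear testing argument avoids $BV$ calculus entirely, so I would present it as the primary route.
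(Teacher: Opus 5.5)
Your proposal is correct. Its decisive step is the same as the paper's: pairing with the positive ground state. The paper's implicit identity $\Lambda(h)\langle e,e_h\rangle=\lambda^{-1}\langle e,e_h\rangle$ in $L^2(\mu)$, with $e=u\circ h$, is your relation $(\lambda-\kappa(h))\int u\,u_h\,d\rho_h=0$ rewritten via $e_h=u_h\circ h$.

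The two proofs differ in how they reach that pairing and how they pass from equal eigenvalues to proportionality.

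\emph{The paper's route.} It first shows $u=\lambda J_h^*(u\circ h)$, since $u-\lambda J_h^*(u\circ h)\in\mathcal H$ solves $-w''+c^2w=0$ and therefore vanishes. Thus $u\circ h$ is an eigenfunction of $\mathcal T_h$. Simplicity of $\Lambda(h)$ gives $u\circ h=\alpha e_h$, and \eqref{eq:profile} then gives $u=\alpha u_h$.

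\emph{Your route.} You stay in $\mathcal H$. Testing the weak equation against $u$ gives $\mathcal N(u)=\lambda D_h(u)$, so once $\lambda=\kappa(h)$ the function $u$ is a minimizer. You then invoke the uniqueness clause of Proposition~\ref{prop:kernel}, which the paper derives from the ground-state square identity.

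\emph{What each buys.} The paper's argument also shows that every positive weak solution has the kernel representation. Yours avoids $J_h^*$ and the free-equation uniqueness, but depends on the minimizer-uniqueness clause.

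Two small corrections to your write-up. First, the pairing argument gives $\lambda=\kappa(h)$ outright, not merely $\lambda\le\kappa(h)$. So the $BV$ derivation of $Dq$ in your first paragraph, and the resulting inequality $\kappa(h)\ge\lambda$, are not needed.

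Second, the pairing needs the weak equations for $u$ and $u_h$ with arbitrary test functions in $\mathcal H$, since neither $u$ nor $u_h$ is compactly supported. This extension follows from density of $C_c^\infty(\mathbb R)$ in $\mathcal H$ and continuity of $\zeta\mapsto\int u\zeta\,d\rho_h$. That continuity comes from \eqref{eq:norm}, boundedness of $u$, and $\rho_h(\mathbb R)=\mu(X)<\infty$. Once you have it, testing with $\zeta=u$ gives $\mathcal N(u)=\lambda D_h(u)$ immediately, so the cutoff argument is also redundant.
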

\begin{proof}
By \eqref{eq:norm}, $e=J_hu=u\circ h$ is bounded and belongs to $L^2(\mu)$. The function $u_e=\lambda J_h^*e\in\mathcal H$ has the same distributional right-hand side as $u$. Hence $w=u-u_e\in\mathcal H$ solves $-w''+c^2w=0$. Testing this equation with $w$, justified by density in $\mathcal H$, gives $\mathcal N(w)=0$. Consequently $u=\lambda J_h^*e$ and $\mathcal T_he=\lambda^{-1}e$. Since $e>0$ and the principal eigenfunction $e_h>0$, their $L^2(\mu)$ inner product is positive. Self-adjointness rules out different eigenvalues, so $\lambda^{-1}=\Lambda(h)$. Simplicity gives $e=\alpha e_h$ with $\alpha>0$, and then $u=\alpha u_h$ by \eqref{eq:profile}.
\end{proof}

\subsection{Coefficient comparison and quantitative deficit}
Let $s_\pm>0$ and let $G_j$ be positive finite measurable functions, identified almost everywhere, such that
$T_j(a)=\int_a^{s_j}G_j(s)^{-1}\,ds<\infty$ for $0<a<s_j$.
All coefficient comparisons are made on the same two component intervals $(0,s_+)$ and $(0,s_-)$, each equipped with Lebesgue measure. On $Y=(0,s_+)\sqcup(0,s_-)$ put
$x_+(a)=T_+(a)$ and $x_-(a)=-T_-(a)$.
The resulting value is denoted $\kappa(G_+,G_-)$. It agrees with the value generated by the signed Green function since coarea gives $D_H(u)=\sum_j\int_0^{s_j}u(x_j(a))^2\,da$.

The abstract Green-time coordinate need not diverge as $a\downarrow0$. If $L_j=T_j(0+)<\infty$, the remaining tail $(L_j,\infty)$ still contributes to $\mathcal N$. Indeed, for $w\in H^1(L_j,\infty)$ with $w(L_j)=b$,
\begin{equation}
 \int_{L_j}^\infty(w'^2+c^2w^2)\,dt
 =cb^2+\int_{L_j}^\infty(w'+cw)^2\,dt\ge cb^2,
 \label{eq:finite-clock-tail}
\end{equation}
with equality for $w(t)=be^{-c(t-L_j)}$. The $H^1$ representative tends to zero at infinity, justifying integration by parts. Reflection gives the same cost on the left. Eliminating a finite tail therefore contributes the boundary energy $cb^2$, not a free Neumann endpoint. Geometric and model Green-time coordinates instead diverge at the poles.

\begin{proposition}[Comparison under contraction of Green-time distances]\label{prop:compression}
Suppose $G_j,\widehat G_j$ satisfy the preceding hypotheses and $G_j\ge\widehat G_j$ almost everywhere. With $\widehat\kappa=\kappa(\widehat G_+,\widehat G_-)$ and positive unit reference eigenfunction $\widehat e$ on $Y$, define
\begin{equation}
 d=\iint_{Y^2}[R_c(x(\alpha),x(\gamma))-R_c(\widehat x(\alpha),\widehat x(\gamma))]
 \widehat e(\alpha)\widehat e(\gamma)\,d\alpha\,d\gamma.
 \label{eq:kernelincrement}
\end{equation}
Then
\begin{equation}
 d\ge0,\qquad \kappa(G_+,G_-)\le\frac{\widehat\kappa}{1+\widehat\kappa d}
 \le\widehat\kappa.
 \label{eq:compression}
\end{equation}
Under the displayed coefficient order, equality of coupled values holds exactly when both coefficient pairs agree almost everywhere. If $G_j>\widehat G_j$ on a set of positive measure in either branch, then $d>0$.
\end{proposition}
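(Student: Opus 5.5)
The plan is to compare the two kernel operators on the common space $L^2(Y)$ through their quadratic forms, with the reference eigenfunction $\widehat e$ as test vector. Put $x_j(a)=\pm T_j(a)$ and $\widehat x_j(a)=\pm\widehat T_j(a)$. The hypothesis $G_j\ge\widehat G_j$ gives $T_j\le\widehat T_j$ pointwise. The first step is to prove the distance contraction
\[
 |x(\alpha)-x(\gamma)|\le|\widehat x(\alpha)-\widehat x(\gamma)|
 \qquad(\alpha,\gamma\in Y).
\]
Within one branch this holds because $|T_j(\alpha)-T_j(\gamma)|=\bigl|\int_\alpha^\gamma G_j^{-1}\bigr|$ and $G_j^{-1}\le\widehat G_j^{-1}$. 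Across branches the distance is $T_+(\alpha)+T_-(\gamma)$, which is at most the corresponding hatted sum. Since $R_c(t,s)$ is a decreasing function of $|t-s|$, the kernel difference in \eqref{eq:kernelincrement} is pointwise nonnegative. Together with $\widehat e>0$, this gives $d\ge0$.

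For the eigenvalue bound, note that $\widehat\Lambda=\langle\widehat e,\widehat{\mathcal T}\widehat e\rangle$ with $\widehat\Lambda=\widehat\kappa^{-1}$. By \eqref{eq:kernelincrement}, $\langle\widehat e,\mathcal T\widehat e\rangle=\widehat\Lambda+d$. The Rayleigh principle for the compact positive operator $\mathcal T$ from Proposition~\ref{prop:kernel}, applied to this unit vector, then gives $\Lambda\ge\widehat\kappa^{-1}+d$. Inverting with $\kappa=\Lambda^{-1}$ yields
\[
 \kappa\le\frac{\widehat\kappa}{1+\widehat\kappa d}\le\widehat\kappa .
\]
The identification of $\kappa(G_+,G_-)$ with $\Lambda^{-1}$ for the operator built from $x$ on $Y$ is exactly Proposition~\ref{prop:kernel} with $X=Y$ and $h=x$.

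For strictness, suppose $G_j>\widehat G_j$ on a set $P$ of positive measure in branch $j$. Then for every $\alpha<\gamma$ in that branch whose interval $(\alpha,\gamma)$ meets $P$ in positive measure, the contraction is strict. Such pairs form a set of positive measure in $Y^2$: take $\alpha$ below and $\gamma$ above a Lebesgue density point of $P$. On that set $R_c$ strictly increases, and with $\widehat e>0$ almost everywhere this forces $d>0$, hence $\kappa<\widehat\kappa$. Conversely, if the pairs agree almost everywhere, then $T_j=\widehat T_j$, so the operators coincide and $\kappa=\widehat\kappa$. This gives the equality characterization.

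The main technical point I expect is the handling of finite Green-time clocks and of integrability. The kernels are bounded by $1/(2c)$ and $Y$ has finite measure, so all operators are Hilbert--Schmidt and $d$ is finite; the finite-clock case $T_j(0+)<\infty$ requires no separate treatment. Measurability of $x$ and the definition of $T_j$ almost everywhere also need a line. The within-branch strictness argument must use the strict monotonicity of $r\mapsto e^{-cr}$, which I would state explicitly.
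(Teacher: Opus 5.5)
Your proposal is correct and follows the paper's proof: contraction of both same-branch and cross-branch Green-time distances gives a pointwise kernel order, testing the Rayleigh quotient on $\widehat e$ gives $\Lambda\ge\widehat\kappa^{-1}+d$, and strictness comes from a positive-measure set of strictly contracted pairs paired against the positive $\widehat e$. The only difference is the witness for strictness. You use same-branch pairs straddling a density point of the excess set, whereas the paper uses cross-branch pairs with one endpoint below the excess interval; both arguments are valid.
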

\begin{proof}
For two arguments in the same branch and in opposite branches, respectively,
\[
 |T_j(a)-T_j(b)|=\left|\int_a^b G_j(s)^{-1}\,ds\right|,
 \qquad |x_+(a)-x_-(b)|=T_+(a)+T_-(b).
\]
Under the coefficient order, none of these Green-time distances increases, so the kernel is pointwise no smaller. Merely ordering the Green-time coordinates pointwise would not control same-branch distances. Testing on $\widehat e$ gives $\Lambda\ge1/\widehat\kappa+d$, proving the estimate.

If $G_j>\widehat G_j$ on a set of positive measure, choose an interior interval $(a_0,a_1)$ with
$d_0=\int_{a_0}^{a_1}(\widehat G_j^{-1}-G_j^{-1})>0$.
For every $a<a_0$, the Green-time coordinate $T_j(a)$ decreases by at least $d_0$. Hence the cross-branch distance from such an $a$ to every point of the other branch decreases by at least $d_0$. Since $(0,a_0)$ and the other branch both have positive Lebesgue measure, the kernel increment is strictly positive on a set of positive product measure. Pairing it with the strictly positive reference eigenfunction $\widehat e$ gives $d>0$. Same-branch pairs below $a_0$ may retain their distance; cross-branch pairs suffice for strictness. Conversely, almost-everywhere equal coefficients give identical Green-time coordinates.
\end{proof}

The comparison uses the pointwise order of the positive kernel under contraction of Green-time distances. Such an order does not imply Loewner order: shortening a two-point separation produces the kernel difference $\left(\begin{smallmatrix}0&d\\d&0\end{smallmatrix}\right)$, with eigenvalues $\pm d$. A direct fixed-profile energy comparison is also unavailable, since $Gf'^2$ and $c^2f^2/G$ vary oppositely.

Figure~\ref{fig:time-contraction} distinguishes the two types of separation controlled by the coefficient order.

\begin{figure}[htbp]
\centering
\begin{tikzpicture}[x=1cm,y=1cm,font=\small,>=Stealth]
  \node[anchor=west] at (-7.7,3.62) {\textbf{(a)} Signed area-to-level coordinates};
  \draw[<->] (-6.4,2.55)--(6.4,2.55);
  \draw (0,2.46)--(0,2.64) node[above=2pt] {$0$};
  \node[left] at (-6.4,2.55) {$-\infty$};
  \node[right] at (6.4,2.55) {$+\infty$};
  \node[above] at (-3.45,2.58) {$x_-(a)=-T_-(a)$};
  \node[above] at (3.45,2.58) {$x_+(a)=T_+(a)$};
  \draw[->] (-5.2,1.99)--node[below] {$a\uparrow s_-$}(-1.45,1.99);
  \draw[->] (5.2,1.99)--node[below] {$a\uparrow s_+$}(1.45,1.99);
  \node[fill=white,inner sep=2pt] at (0,1.80) {$u_-(0)=u_+(0)$};
  \node[anchor=west] at (-7.7,0.95) {\textbf{(b)} Reference and contracted separations};
  \draw[->] (-5.8,0.2)--(6.2,0.2);
  \draw[->] (-5.8,-1.65)--(6.2,-1.65);
  \node[anchor=east] at (-6.0,0.2) {$\widehat G$};
  \node[anchor=east] at (-6.0,-1.65) {$G\ge\widehat G$};
  \draw[densely dotted] (0,0.48)--(0,-1.93);
  \node[below] at (0,-1.85) {$0$};
  \foreach \x/\lab in {-4.8/{$b$},1.65/{$a_2$},4.9/{$a_1$}}{
    \fill (\x,0.2) circle (1.6pt);
    \node[below=3pt] at (\x,0.2) {\lab};
  }
  \foreach \x/\lab in {-3.2/{$b$},1.10/{$a_2$},3.15/{$a_1$}}{
    \fill (\x,-1.65) circle (1.6pt);
    \node[below=3pt] at (\x,-1.65) {\lab};
  }
  \draw[<->] (-4.8,0.57)--node[fill=white,inner sep=1pt] {cross-branch}(1.65,0.57);
  \draw[<->] (1.65,0.57)--node[fill=white,inner sep=1pt] {same-branch}(4.9,0.57);
  \draw[<->] (-3.2,-1.27)--(1.10,-1.27);
  \draw[<->] (1.10,-1.27)--(3.15,-1.27);
  \draw[densely dotted] (-4.8,-0.36)--(-3.2,-0.94);
  \draw[densely dotted] (1.65,-0.36)--(1.10,-0.94);
  \draw[densely dotted] (4.9,-0.36)--(3.15,-0.94);
\end{tikzpicture}
\[
 |x_+(a_1)-x_+(a_2)|=\int_{a_1}^{a_2}\frac{ds}{G_+(s)},
 \qquad |x_+(a_2)-x_-(b)|=T_+(a_2)+T_-(b).
\]
\caption{Signed Green-time coordinates and the two types of separation. In panel (b), $b\in(0,s_-)$ and $0<a_1<a_2<s_+$. Matching labels identify the same area point on the reference and comparison axes. Under $G_j\ge\widehat G_j$, neither marked separation increases; the contraction need not be uniform. The displayed formulas correspond to the same-branch and cross-branch segments, respectively, and hold with hats for the reference axes. Infinite tails in panel (a) refer to geometric and model branches. The comparison is pointwise kernel order, not Loewner order, and does not supply the phase information needed for centering.}
\label{fig:time-contraction}
\end{figure}

\FloatBarrier
\section{The model maximum and strict domain comparison}\label{sec:model}
For $s_1+s_2=A$, $s_j>0$, let $\kappa_K(s_1,s_2)$ be the coupled value with coefficient $G_K$ on each branch and Green-time coordinate \eqref{eq:modelclock}. It is the minimum of the whole-line Rayleigh quotient with the two branches sharing their interface trace, not the ordinary lowest Neumann value of the disconnected union, which has zero modes.
Label the model branches by $s_+=s_1$, $s_-=s_2$, and write $u_\pm(t)=u(\pm t)=f_\pm(a_{K,s_\pm}(t))$ for $t>0$. Since $(a_{K,s_j})'=-G_K(a_{K,s_j})$, change of variables gives
\begin{align}
 \mathcal N(u)&=\sum_{j=\pm}\int_0^{s_j}
       \left(G_K|f_j'|^2+\frac{c^2}{G_K}|f_j|^2\right)\,da,
       \label{eq:joint-area-energy}\\
 D_{\rm mod}(u)&=\sum_{j=\pm}\int_0^{s_j}|f_j|^2\,da,
 \qquad f_+(s_+)=f_-(s_-).\label{eq:joint-area-mass}
\end{align}
The form domain in area coordinates consists of locally absolutely continuous functions $f_j$ with finite displayed energy and a common interface trace at $a=s_j$. These conditions are equivalent to \eqref{eq:joint-form-domain}: the energy identity yields the two $H^1$ half-line profiles, whose traces glue. Identities follow first on compact subintervals and then by exhaustion. No branch Neumann condition is imposed. The $O(\sqrt a)$ center expansion below belongs to regular eigenfunctions, not to every trial element.

For a fixed spectral parameter $\lambda>0$ let $v_\lambda$ be the regular solution
\begin{equation}
 -(G_Kv_\lambda')'+\frac{c^2}{G_K}v_\lambda=\lambda v_\lambda,
 \qquad v_\lambda(a)\sim\sqrt a.
 \label{eq:modelODE}
\end{equation}
\Needspace{4\baselineskip}
Let $a_D(\lambda)$ be its first zero in the model area interval, or its endpoint if no zero occurs. That endpoint is $4\pi/K$ for $K>0$ and $+\infty$ otherwise. On $I_\lambda=(0,a_D(\lambda))$ put
\begin{equation}
 R_\lambda(a)=G_K(a)\frac{v_\lambda'(a)}{v_\lambda(a)}.
 \label{eq:response}
\end{equation}
On $I_\lambda$, $v_\lambda>0$ but $R_\lambda$ may have either sign. This weighted logarithmic derivative is the boundary-flux response: at $a=s$, it is the outward flux $G_K(s)v_\lambda'(s)$ divided by $v_\lambda(s)$.

\begin{lemma}[fixed-spectral-parameter concavity]\label{lem:concavity}
For $K\ge0$ and $\lambda>2K$, $R_\lambda''<0$ on all of $I_\lambda$.
\end{lemma}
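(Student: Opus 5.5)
The plan is to study $R_\lambda$ through the Riccati equation it satisfies, and to obtain the sign of $R_\lambda''$ from a first-order linear ODE for $W=R_\lambda''$ by a continuity (first-crossing) argument started at the center $a=0$. Throughout, $G=G_K(a)=a(2c-Ka)$, so $G'=2c-2Ka$ and $G''=-2K$. The comparison case $\lambda=2K$ serves as a guide. There $v=\sqrt{G}/c$ corresponds to $S_K$ (Lemma~\ref{lem:angularmodel}), so $R_{2K}=G'/2=c-Ka$ is linear and $R_{2K}''\equiv0$. The hypothesis $\lambda>2K$ should push $R_\lambda''$ strictly negative.

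\emph{Step 1 (Riccati equation and its derivatives).} From \eqref{eq:modelODE} and \eqref{eq:response}, on $I_\lambda$ we get
\[
 G R' = c^2-R^2-\lambda G .
\]
Differentiating once gives $G'R'+GR''=-2RR'-\lambda G'$. Differentiating again and writing $W=R''$ gives the linear equation
\[
 G W' + 2(G'+R)W = -2\bigl(R'^2-KR'-\lambda K\bigr) =: F .
\]
The quadratic $R'^2-KR'-\lambda K$ has roots $r_{1,2}=\tfrac12\bigl(K\pm\sqrt{K^2+4\lambda K}\bigr)$. Hence $F<0$ whenever $R'<r_2$, where $r_2$ denotes the smaller root; for $K=0$ this just means $R'\neq0$, in particular $R'<0$.

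\emph{Step 2 (behaviour at the center).} Since $G$ is a polynomial with a simple zero at $0$, a Frobenius argument gives $v_\lambda(a)=\sqrt a\,\phi(a)$ with $\phi$ analytic and $\phi(0)\ne0$. Thus $R$ extends analytically to $a=0$, with $R(0)=G'(0)/2=c$. Substituting $R=c+\alpha a+O(a^2)$ into the Riccati equation gives $2c\alpha=-2c\alpha-2c\lambda$, so $R'(0)=-\lambda/2$. The key numerical fact is that $-\lambda/2<r_2$ is equivalent to $\sqrt{K^2+4\lambda K}<K+\lambda$, i.e.\ to $\lambda>2K$; this is exactly where the hypothesis enters. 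Consequently $F(0)<0$. Evaluating the $W$-equation at $a=0$, where $G=0$ and $2(G'+R)=6c$, yields $W(0)=F(0)/(6c)<0$.

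\emph{Step 3 (continuation across $I_\lambda$).} Suppose $W$ has a first zero $a_*\in I_\lambda$. On $(0,a_*)$ we have $W<0$, so $R'$ is decreasing and $R'(a)<R'(0)<r_2$. Hence $F(a_*)<0$, and the equation at $a_*$ gives $G(a_*)W'(a_*)=F(a_*)<0$ with $G(a_*)>0$. So $W'(a_*)<0$, which is impossible for a function that is negative just before $a_*$ and vanishes at $a_*$. Therefore $W<0$ on all of $I_\lambda$. Near $a_D(\lambda)$ only interior points matter, since $R\to-\infty$ there and causes no issue.

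The main obstacle I expect is Step 2. It requires the regularity of $R_\lambda$ at the singular endpoint $a=0$, namely analyticity (or at least $C^3$) of $v/\sqrt a$, together with the correct value $R'(0)=-\lambda/2$. It also requires checking that the comparison $-\lambda/2<r_2$ uses precisely $\lambda>2K$ and degenerates exactly at $\lambda=2K$, in agreement with the linear profile $R_{2K}$. If the Frobenius step turns out to be awkward, I would fall back on applying the first-crossing argument on $(\varepsilon,a_D)$ and letting $\varepsilon\downarrow0$, using asymptotics of $v_\lambda$ at the center.
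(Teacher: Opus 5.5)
Your proposal is correct and follows essentially the same route as the paper: the Riccati equation, the twice-differentiated identity $G_KR'''=-2(R+G_K')R''-2[(R')^2-KR'-K\lambda]$, the center value $R''(0+)=-\lambda(\lambda-2K)/(12c)<0$, and a first-zero contradiction using $R'(a_*)<R'(0+)=-\lambda/2$. The only cosmetic differences are that you read off $R''(0)$ by evaluating the differentiated equation at $a=0$ rather than from the quadratic term of the center series, and that you control the sign of $(R')^2-KR'-K\lambda$ through its smaller root rather than through the explicit lower bound $\lambda(\lambda-2K)/4$; both are equivalent.
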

\begin{proof}
Differentiating the equation and its regular center series gives
\begin{align}
 R'&=(c^2-R^2)/G_K-\lambda,\label{eq:riccati}\\
 R(a)&=c-\frac\lambda2a-\frac{\lambda(\lambda-2K)}{24c}a^2+O(a^3),\label{eq:series}\\
 G_KR'''&=-2(R+G_K')R''-2[(R')^2-KR'-K\lambda].\label{eq:third}
\end{align}
The last identity follows by twice differentiating $G_KR'=c^2-R^2-\lambda G_K$ and using $G_K''=-2K$. The center expansion gives $R''(0+)=-\lambda(\lambda-2K)/(12c)<0$. If $a_*$ were the first zero of $R_\lambda''$ in $I_\lambda$, then $R_\lambda'$ would be decreasing on $(0,a_*)$, where $R_\lambda''<0$. Hence $R_\lambda'(a_*)\le R_\lambda'(0+)=-\lambda/2$, and
\[
 (R')^2-KR'-K\lambda\ge\lambda(\lambda-2K)/4>0.
\]
Equation~\eqref{eq:third} would give $R_\lambda'''(a_*)<0$, contradicting first contact of $R_\lambda''$ with zero from below.
\end{proof}

For $K\ge0$, Lemma~\ref{lem:angularmodel} gives $\beta>2K$ and
\begin{equation}
 v_\beta>0\text{ on }(0,m],\qquad R_\beta(m)=0.
 \label{eq:responsezero}
\end{equation}

\begin{proposition}[equal-area model maximum, all curvature signs]\label{prop:modelsplit}
Fix $K\in\mathbb R$ and $A>0$, with $KA<4\pi$ if $K>0$. For every $s_1,s_2>0$ with $s_1+s_2=A$,
\begin{equation}
 \kappa_K(s_1,s_2)\le\beta_K(A/2),
 \qquad\text{with equality exactly when }s_1=s_2=m.
 \label{eq:modelsplit}
\end{equation}
\end{proposition}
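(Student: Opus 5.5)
The proof reduces the coupled minimization to the one-branch response function $R_\lambda$ of \eqref{eq:response}. I would first test the unequal split $s_1\ne s_2$ at the fixed spectral parameter $\lambda=\beta$, and then use Lemma~\ref{lem:positive-ground-state} to turn the resulting non-strict bound into strictness and into the equality characterization. By symmetry assume $s_1\le m\le s_2$, and work in the area coordinates \eqref{eq:joint-area-energy}--\eqref{eq:joint-area-mass}.

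\emph{Equal split.} Take $f_+=f_-=f_0$, the half-area angular profile of Lemma~\ref{lem:angularmodel}, written in area coordinates as in \eqref{eq:areaeigenmodel}. The traces agree at $a=m$, and $f_0>0$. Each branch satisfies the ODE with $\lambda=\beta$ and has zero flux $G_Kf_0'(m)=0$, so the sum of the two inward interface derivatives vanishes. This is exactly the distributional equation \eqref{eq:measureode} for the glued whole-line profile, since the model time measure has no atom. Lemma~\ref{lem:positive-ground-state} then gives $\kappa_K(m,m)=\beta$.

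\emph{Unequal split, trial function.} Suppose first that $v_\beta>0$ on $(0,s_2]$. Take the trial pair $f_j=v_\beta/v_\beta(s_j)$, which has common trace $1$. Integrating by parts on each branch uses the regular center behaviour $v_\beta\sim\sqrt a$, which kills the center terms. This gives
\[
 \mathcal N(u)-\beta D_{\rm mod}(u)=R_\beta(s_1)+R_\beta(s_2).
\]
For $K\ge0$, Lemma~\ref{lem:concavity} applies because $\beta>2K$. Strict concavity of $R_\beta$ together with $R_\beta(m)=0$ from \eqref{eq:responsezero} yields $R_\beta(s_1)+R_\beta(s_2)<2R_\beta(m)=0$ whenever $s_1\ne m$. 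Hence $\kappa_K(s_1,s_2)<\beta$.

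Suppose instead that $v_\beta$ has its first zero $a_D(\beta)\le s_2$. Take $f_1\equiv0$, and on the second branch take $v_\beta$ on $(0,a_D)$ extended by zero. The traces still glue at $0$ and the quotient equals $\beta$, so $\kappa_K\le\beta$. Equality would make this trial function a minimizer. By Proposition~\ref{prop:kernel} every minimizer is a multiple of the strictly positive profile $u_h$, which contradicts its vanishing on a whole branch. So the inequality is strict in this case as well.

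\emph{Strictness and equality characterization.} The general principle behind both cases is the same. If $\kappa_K(s_1,s_2)=\beta$, every trial function attaining $\beta$ is a multiple of the positive minimizer. By uniqueness of regular solutions, that minimizer is $v_\beta$ on each branch and satisfies the zero-sum interface condition $R_\beta(s_1)+R_\beta(s_2)=0$. For $s_1\ne m$ this contradicts the strict concavity inequality above. This settles the equality case and shows equality holds exactly at $s_1=s_2=m$.

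\emph{Main obstacle: $K<0$.} Lemma~\ref{lem:concavity} is stated only for $K\ge0$. In \eqref{eq:third} the term $(R')^2-KR'-K\lambda$ need not stay positive from the bound $R'\le-\lambda/2$ alone when $|K|$ is large. I would first try to rerun the first-contact argument using the Riccati equation \eqref{eq:riccati} to bound $R'$ from below as well, aiming to show $(R')^2+|K|R'+|K|\lambda>0$ along $\{R''<0\}$. Failing that, I would seek another route to $R_\beta(s_1)+R_\beta(s_2)<0$, for example monotonicity of $\lambda\mapsto R_\lambda$ combined with a direct Sturm comparison between the two branch lengths.
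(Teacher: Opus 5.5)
\textbf{Assessment.} Your equal-split step and your whole treatment of $K\ge0$ are the paper's argument. This covers the glued profile $f_0$ with Lemma~\ref{lem:positive-ground-state}, the trial pair $v_\beta/v_\beta(s_j)$ with the response identity and Lemma~\ref{lem:concavity}, and the truncated $v_\beta$ with $f_S\equiv0$ plus positivity of the minimizer. Your Dirichlet-zero subcase, as written, is in fact valid for every $K$.

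\textbf{The gap.} The case $K<0$ with $s_1,s_2<a_D(\beta)$ is left open. There you prove neither $\kappa_K(s_1,s_2)\le\beta$ nor strictness. Your ``equality characterization'' paragraph does not help, because it reduces to the same unproved inequality $R_\beta(s_1)+R_\beta(s_2)<0$. Your diagnosis of Lemma~\ref{lem:concavity} is correct, and the obstruction is real. With $x=-R'\ge\beta/2$, the quantity $x^2-|K|x+|K|\beta$ is negative near $x=|K|/2$ as soon as $|K|>4\beta$. By scaling, $\beta/|K|$ depends only on $|K|A$, and it tends to $0$ as $|K|A\to\infty$ (large hyperbolic disks). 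So the first-contact route would need a new lower bound on $R'$, which you have not identified.

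\textbf{The missing idea.} For $K<0$, the paper drops the response function and transplants the equal-split profile in Green time, $u(t)=g_0(|t|)$. Its energy $\mathcal N(u)=2E_0=2\beta M_0$ does not depend on the split, since $\mathcal N$ is computed on the Green-time line. For the mass, write $f_j=g_0\circ T_{K,s_j}$ and change variables $a=a_{K,s_j}(t)$. Then integrate by parts using $a_{K,s}(0)=s$, $w_0=-(g_0^2)'>0$ and $\int_0^\infty w_0=1$:
\[
 \int_0^{s_j}f_j^2\,da=\int_0^\infty g_0^2\,(-\partial_ta_{K,s_j})\,dt
 =s_j-\int_0^\infty a_{K,s_j}w_0\,dt .
\]
Hence $D_{\rm mod}(u)=2M_0+\mathcal L_K$, with $\mathcal L_K$ as in \eqref{eq:negative-model-mass}. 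The explicit formula \eqref{eq:model-arbitrary-superlevel} gives $\partial_s^2a_{K,s}(t)<0$ for $K<0$ and $t>0$. Therefore $a_{K,s_1}+a_{K,s_2}<2a_{K,m}$ pointwise, so $\mathcal L_K>0$ when $s_1\ne s_2$. This yields $\kappa_K(s_1,s_2)\le2\beta M_0/(2M_0+\mathcal L_K)<\beta$.

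This mechanism complements yours exactly. The superlevel area $a_{K,s}$ is linear in $s$ for $K=0$ and convex for $K>0$. That is why the response-concavity argument is needed for $K\ge0$, and the mass-gain argument only for $K<0$.
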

\begin{proof}
\emph{The equal-area model, for every admissible $K$.} In the equal-area model $s_1=s_2=m$ the entire time density is $B_0(|t|)$. The positive even extension $g_0(|t|)$ solves the weak equation with spectral parameter $\beta$: $g_0'(0)=0$ prevents a Dirac mass at $t=0$ in the distributional second derivative. Lemma~\ref{lem:positive-ground-state} shows that it is a positive scalar multiple of the normalized minimizing profile and gives $\kappa_K(m,m)=\beta$. Its trial mass is $2M_0$, so the unit-mass profile is $g_0(|t|)/\sqrt{2M_0}$.

\emph{Nonnegative curvature bound: both areas lie in $I_\beta$.} For $K\ge0$, suppose first $s_1,s_2\in I_\beta$. The branch functions
$f_j(a)=v_\beta(a)/v_\beta(s_j)$ have common interface trace $1$ and give an admissible whole-line profile. Multiplying the ODE by $f_j$ and integrating yields
\[
 \int_0^{s_j}(G_Kf_j'^2+c^2f_j^2/G_K-\beta f_j^2)\,da=R_\beta(s_j).
\]
The center term vanishes. Strict concavity gives
$R_\beta(s_1)+R_\beta(s_2)\le2R_\beta(m)=0$, strictly for unequal areas. Thus the combined quotient is at most $\beta$.

\emph{Nonnegative curvature bound: a Dirichlet zero is reached.} For the remaining $K\ge0$ case, the two branch areas do not both belong to $I_\beta$. Put
\[
 s_D=a_D(\beta),\qquad
 s_L=\max\{s_1,s_2\},\qquad s_S=\min\{s_1,s_2\}.
\]
Since $v_\beta>0$ on $(0,m]$ and at least one branch area is outside $I_\beta$, one has
\[
 m<s_D\le s_L<A.
\]
The right endpoint of the model area interval is larger than $A$: it is $4\pi/K$ when $K>0$ and $+\infty$ when $K=0$. Thus $s_D$ is an actual interior Dirichlet zero, not the endpoint assigned in the absence of a zero, and $G_K(s_D)>0$. Since $s_D>m$, only the longer branch can reach it. Define the long- and short-branch functions by
\[
 f_L(a)=
 \begin{cases}
  v_\beta(a),&0<a<s_D,\\
  0,&s_D\le a<s_L,
 \end{cases}
 \qquad
 f_S(a)=0\quad(0<a<s_S).
\]
The regular ODE solution has
\[
 v_\beta(a)=\sqrt a+O(a^{3/2}),\qquad
 v_\beta'(a)=\frac1{2\sqrt a}+O(\sqrt a).
\]
Since $G_K(a)\sim2ca$, both $G_K|v_\beta'|^2$ and $c^2|v_\beta|^2/G_K$ are integrable at zero. At the regular endpoint $s_D$, $v_\beta(s_D)=0$, so the zero extension has no function jump and finite energy, and
\[
 f_L(s_L)=f_S(s_S)=0.
\]
Thus the two branches have the required common interface trace. By the equivalence of this area-coordinate form domain with \eqref{eq:joint-form-domain}, the Green-time change gives a nonzero admissible profile in $H^1(\mathbb R)$. Multiplying the model ODE by $v_\beta$ and integrating over $(0,s_D)$ gives
\[
 \int_0^{s_D}\left(G_K|v_\beta'|^2+\frac{c^2}{G_K}|v_\beta|^2\right)\,da
 =\beta\int_0^{s_D}|v_\beta|^2\,da,
\]
so this profile has Rayleigh quotient exactly $\beta$. If it attained the minimum, Proposition~\ref{prop:kernel} would make it a scalar multiple of the strictly positive minimizing profile. This is impossible because $f_S$ vanishes identically on the nonempty shorter branch. Hence $\kappa_K(s_1,s_2)<\beta$ in this case.

\emph{Negative curvature bound.} For $K<0$, the explicit superlevel area in \eqref{eq:model-arbitrary-superlevel} satisfies, for every $t>0$,
\begin{equation}
 \partial_s^2 a_{K,s}(t)
 =\frac{2(4\pi)^2K\vartheta(1-\vartheta)}{[4\pi-Ks(1-\vartheta)]^3}<0.
 \label{eq:negative-model-concavity}
\end{equation}
Hence $a_{K,s_1}+a_{K,s_2}\le2a_0$, strictly for unequal areas. For the even test $u(t)=g_0(|t|)$, integration by parts on each branch gives
\begin{gather}
 \mathcal N(u)=2\beta M_0,\quad
 D_{\rm mod}(u)=2M_0+\mathcal L_K,\quad
 \mathcal L_K=\int_0^\infty(2a_0-a_{K,s_1}-a_{K,s_2})w_0\,dt,\label{eq:negative-model-mass}\\
 \kappa_K(s_1,s_2)\le\frac{2\beta M_0}{2M_0+\mathcal L_K}<\beta
 \quad(s_1\ne s_2).\label{eq:negative-model-test}
\end{gather}
Indeed the mass of a branch is $s-\int_0^\infty a_{K,s}w_0$, and $w_0>0$, $\int w_0=1$. 
\end{proof}
At $K=0$ the superlevel area is linear in $s$, so strictness there comes from the fixed-spectral-parameter proof, not from strict Jensen concavity.

\begin{theorem}[interior equality exclusion and strict comparison]\label{thm:transmission}
For a distinct pair of interior Green poles, simultaneous identities
\begin{equation}
 G_+=G_K\text{ on }(0,s_+),\qquad
 G_-=G_K\text{ on }(0,s_-)
 \label{eq:doubleequality}
\end{equation}
would force $KA=4\pi$. Consequently at least one coefficient is strictly larger on an interior interval, and
\begin{equation}
 \kappa(H)<\kappa_K(s_+,s_-)\le\beta
 \quad\text{for every permitted curvature sign.}
 \label{eq:positiveallpair}
\end{equation}
Almost-everywhere equality in \eqref{eq:doubleequality} has the same consequence.
\end{theorem}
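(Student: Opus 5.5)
The plan is to show that double equality \eqref{eq:doubleequality} forces each sign branch to be an exact constant-curvature model disk in its branch coordinate. Matching the two model metrics to first order across the interior zero curve $\Gamma$ should then produce the relation $KA=4\pi$. Strictness in \eqref{eq:positiveallpair} will follow from Proposition~\ref{prop:compression} and Proposition~\ref{prop:modelsplit}.

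\emph{Step 1 (a.e.\ to everywhere).} The coefficients $G_j\in C^2_{\rm loc}(0,s_j)$ and $G_K$ are continuous, so almost-everywhere equality upgrades to equality on the whole open intervals. Conversely, if \eqref{eq:doubleequality} fails, then by \eqref{eq:order} and continuity we get $G_j>G_K$ on an open interval in some branch.

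\emph{Step 2 (rigidity of each branch).} Suppose $G_j=G_K$ on $(0,s_j)$. Then $T_j=T_{K,s_j}$, so $a_j(t)=a_{K,s_j}(t)$ for all $t>0$. In the branch coordinate $\zeta$ with $r=e^{-ct}$, \eqref{eq:LL-coarea-regularity} and \eqref{eq:Acum-model} say that the cumulative area of $\rho_j$ equals $\mathcal A_{K,s_j}(r)$ for every $r<1$. The local curvature of $\rho_j$ is $K_g\le K$ and its mass is $s_j$. The equality argument in the proof of Lemma~\ref{lem:cumulative} therefore applies verbatim: equality of cumulative masses forces equality in both the Bol and the circular Cauchy--Schwarz steps for a.e.\ $r$, and hence $\rho_j=q_{K,s_j}$ on $\D$. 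Choosing, say, $f(r)=r$ in \eqref{eq:radialmass-order} produces the needed equality case.

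\emph{Step 3 (transmission across $\Gamma$).} We work in the single coordinate $\xi=\Xi$ near a point of $\Gamma=\{|\xi|=1\}$, where the pulled-back density $\widetilde\rho$ is $C^2$, hence $C^1$, across $\Gamma$ (since $\omega\in C^2(\Omega)$ and $\Xi$ is conformal there).
\begin{itemize}[leftmargin=*]
\item On $\{|\xi|<1\}$ we have $\widetilde\rho=q_{K,s_+}(|\xi|)$.
\item On $\{|\xi|>1\}$ the branch coordinate is $\zeta=1/\xi$, so $\widetilde\rho(\xi)=q_{K,s_-}(1/|\xi|)\,|\xi|^{-4}$.
\end{itemize}
Continuity at $|\xi|=1$ gives $s_+(4\pi-Ks_+)=s_-(4\pi-Ks_-)$, which is only the weak relation $(s_+-s_-)(4\pi-KA)=0$. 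The decisive condition is continuity of the radial derivative of $\log\widetilde\rho$. From \eqref{eq:curvature-density},
\[
\frac{d}{dr}\log q_{K,s}\Big|_{r=1}=-\frac{4Ks}{4\pi}=-\frac{Ks}{\pi}.
\]
Hence the inner one-sided derivative is $-Ks_+/\pi$. The outer one is $+Ks_-/\pi-4$, where the $-4$ comes from the factor $|\xi|^{-4}$ and the sign change from $r\mapsto 1/r$. Equating the two gives $K(s_++s_-)=4\pi$, that is, $KA=4\pi$. This contradicts \eqref{eq:assumptions}: the case $K\le0$ is impossible outright, and $KA<4\pi$ is assumed when $K>0$. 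I expect this step to be the crux, and I would double-check the Jacobian factor $|d\zeta/d\xi|^2=|\xi|^{-4}$ and the orientation signs carefully. Note that only $C^1$ regularity of $\omega$ at interior points is used.

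\emph{Step 4 (strict comparison).} By Steps 1--3, $G_j>G_K$ on an interval of positive measure in some branch, while $G_\pm\ge G_K$ everywhere by \eqref{eq:order}. Take $\widehat G_\pm=G_K$ in Proposition~\ref{prop:compression}. Then $d>0$, so
\[
\kappa(G_+,G_-)\le\frac{\widehat\kappa}{1+\widehat\kappa d}<\widehat\kappa=\kappa_K(s_+,s_-),
\]
using $\widehat\kappa>0$, which holds because $\Lambda<\infty$ for a finite measure. Coarea identifies $\kappa(G_+,G_-)=\kappa(H)$, and Proposition~\ref{prop:modelsplit} gives $\kappa_K(s_+,s_-)\le\beta$, completing \eqref{eq:positiveallpair}.
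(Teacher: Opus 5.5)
Your proof is correct and follows essentially the paper's route: you upgrade to pointwise equality, use equality in the Cauchy--Schwarz/Bol chain to get rigidity on each branch, match first derivatives across $\Gamma$ to force $KA=4\pi$, and finish with Proposition~\ref{prop:compression} and Proposition~\ref{prop:modelsplit}. The only difference is bookkeeping, and your computation checks out. You identify the whole branch density $\rho_j=q_{K,s_j}$ through the equality case of Lemma~\ref{lem:cumulative} and match $\partial_r\log\widetilde\rho$ in the $\Xi$-chart. The paper only extracts $|\nabla h_j|^2=1/G_K(a_j(t))$ on levels and matches $\log|\nabla_gH|_g^2$ along integral curves of $\nabla H/|\nabla H|^2$. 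These coincide, since $\log|\nabla_gH|_g^2=-\log\widetilde\rho-2\log(c|\xi|)$ in that chart.
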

\begin{proof}
Assume the two coefficients equal their models. Interior continuity upgrades almost-everywhere equality to pointwise equality. We first show that the gradient norm is constant on every positive level. For a regular level of branch $j$, write $q=|\nabla h_j|$ and $L=\ell_j(t)$. Unit flux and the coarea density are
\[
 \int_{\{h_j=t\}}q\,ds_g=1,
 \qquad
 B_j(t)=\int_{\{h_j=t\}}q^{-1}\,ds_g.
\]
Thus Cauchy--Schwarz gives $L^2\le B_j(t)$. Under \eqref{eq:doubleequality}, the endpoint equalities in \eqref{eq:order} read
\[
 B_j(t)=L^2=G_K(a_j(t)).
\]
Equality in Cauchy--Schwarz therefore forces $q$ to be constant along that level. Unit flux then gives $qL=1$, so
\begin{equation}
 |\nabla h_j|^2=q^2=1/G_K(a_j(t))\quad\text{on }h_j=t,
 \qquad a_j'(t)=-G_K(a_j(t)).
 \label{eq:transnormal}
\end{equation}
Fix a point $x_0$ of the regular interior zero curve and an interior conformal coordinate $z$ near $x_0$, in which $g=\sigma(z)|dz|^2$ with $\sigma>0$ and $\sigma\in C^2$. In (L) one may use the physical planar coordinate; in (G) the local conformal factor is smooth. The function $H$ is Euclidean harmonic in this coordinate and has nonzero gradient after shrinking the neighborhood. In particular,
\begin{equation}
 W:=\frac{\nabla_gH}{|\nabla_gH|_g^2}
   =\frac{\nabla_{\rm E}H}{|\nabla_{\rm E}H|^2},
 \qquad
 \log|\nabla_gH|_g^2
   =\log|\nabla_{\rm E}H|^2-\log\sigma.
 \label{eq:LL-transmission-regularity}
\end{equation}
The vector field $W$ is smooth because the conformal factors cancel. Its local integral curve $x(t)$ with $x(0)=x_0$ satisfies $H(x(t))=t$. The function
\[
 \ell(t):=\log|\nabla_gH(x(t))|_g^2
\]
is $C^2$ across $t=0$, so its first derivatives match. This uses only interior regularity of the metric and does not differentiate the minimizing profile.

Under the assumed double equality, \eqref{eq:transnormal} gives a model differential equation for each $a_j(t)$ at positive times. Since $a_j(0+)=s_j$ and $G_K$ is smooth and positive at $s_j$, this equation also supplies the one-sided first derivatives at zero. This endpoint assertion is a consequence of the equality assumption, not a general boundary regularity claim for $B_j$. On $t>0$ one has $\ell(t)=-\log G_K(a_+(t))$ and $a_+'=-G_K(a_+)$. On $t<0$ one has $\ell(t)=-\log G_K(a_-(-t))$ and $\frac d{dt}a_-(-t)=G_K(a_-(-t))$. Therefore
\[
 \ell'(0+)=G_K'(s_+),\qquad
 \ell'(0-)=-G_K'(s_-).
\]
Their equality implies
\begin{equation}
 0=G_K'(s_+)+G_K'(s_-)=8\pi-2K(s_++s_-),
 \qquad KA=4\pi,
 \label{eq:criticalarea}
\end{equation}
contrary to \eqref{eq:assumptions}. Thus a nonnegative continuous coefficient deficit is positive on an interior interval. Apply Proposition~\ref{prop:compression} and then \eqref{eq:modelsplit}.
\end{proof}

\FloatBarrier
\section{Simultaneous centering and the spectral conclusion}\label{sec:center}
To impose the two weighted moment conditions, we return from the time marginal to the spatial measure and the phase of the trial function.

\subsection{Parameter extension and continuity}
Fix an absolutely continuous measure $\mu$ on $\D$ with $0<\mu(\D)=A<\infty$; no boundary bounds or regularity of its density are assumed. Extend $B_a(z)$ to $|a|=1$ by $B_a(z)=-a$. On the closed bidisk $\mathcal P=\overline\D^2$, use the same expressions for $H_\xi,\chi_\xi$, $\xi=(a,b)$, with $H_{a,a}=0$ and $\chi_{a,a}=1$. Pole points have zero $\mu$-measure. In this parameter section we use $F_\xi$ and $h_a$ for their pullbacks to $\D$, and use the same notation for the corresponding functions composed with $\Phi^{-1}$ on $\Omega$ when taking physical mass and energy. Define
\begin{equation}
 \eta_\xi=u_{H_\xi}(0)>0,\qquad
 F_\xi=\eta_\xi^{-1}e_{H_\xi}\chi_\xi,\qquad
 M(F_\xi)=\eta_\xi^{-2}.
 \label{eq:Fadapt}
\end{equation}
For distinct interior poles this is $v_\xi(H_\xi)\chi_\xi$, where $v_\xi=u_{H_\xi}/\eta_\xi$ and $v_\xi(0)=1$.
Table~\ref{tab:normalizations} distinguishes the unit-mass and unit-interface-value normalizations.

\begin{table}[htbp]
\centering
\small
\begin{tabular}{@{}ll@{}}
\toprule
Object & Normalization or mass identity\\
\midrule
$e_h$ & $\|e_h\|_{L^2(\mu)}=1$\\
$u_h$ & $D_h(u_h)=1$, $\mathcal N(u_h)=\kappa(h)$\\
$v_\xi=u_{H_\xi}/\eta_\xi$ & $v_\xi(0)=1$\\
$F_\xi=v_\xi(H_\xi)\chi_\xi$ & $M(F_\xi)=\eta_\xi^{-2}$\\
\bottomrule
\end{tabular}
\caption{The unit-mass profile and the unit-interface-value profile use different normalizations.}
\label{tab:normalizations}
\end{table}

\begin{lemma}[continuity including boundary parameters]\label{lem:continuity}
The functions $\kappa_\xi,\eta_\xi$ are continuous on $\mathcal P$; $e_{H_\xi}$ is continuous in $L^2(\mu)$. The family $F_\xi$ is uniformly bounded, continuous in $L^p(\mu)$ for $1\le p<\infty$, and continuous in moments against every fixed $L^1(\mu)$ weight. Bounds are uniform over $\mathcal P$ for this fixed $\mu$, not over all measures of mass $A$.
\end{lemma}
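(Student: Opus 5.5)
The plan is to reduce every assertion to continuity of the map $\xi\mapsto H_\xi$ into functions on $(\D,\mu)$. From that we obtain continuity of the operators $\mathcal T_{H_\xi}$ in operator norm on $L^2(\mu)$. Spectral perturbation theory for a simple top eigenvalue then transfers this continuity to $\Lambda$, $e_H$, $u_H$, $\eta$ and $F_\xi$.

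\emph{Step 1: continuity of the time functions.} On $\mathcal P$ minus the diagonal, $\Xi_\xi=B_a/B_b$ depends continuously on $\xi$ at every $z\in\D$ other than the poles. The extension $B_a=-a$ for $|a|=1$ is the locally uniform limit of $B_{a'}$ as $a'\to a$ on compact subsets of $\D$. Hence $H_\xi(z)$ and $\chi_\xi(z)$ converge pointwise $\mu$-a.e.\ as $\xi\to\xi_0$, including boundary parameters. On the diagonal the limit is $H=0$. There $\chi$ converges to a unimodular constant, and I expect this constant to equal $1$ after checking the phase convention: $B_a/B_b\to1$ as $b\to a$ locally uniformly off $a$. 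One caveat: if the chosen phase gives a different constant there, only the moment continuity below would need adjusting.

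\emph{Step 2: norm continuity of $\mathcal T$.} The kernel $R_c$ is bounded by $1/(2c)$ and $1$-Lipschitz-type, since $|R_c(t,s)-R_c(t',s')|\le \tfrac12(|t-t'|+|s-s'|)$. We use the truncated form, by $\min(1/c,\cdot)$, to avoid integrability issues at poles. Dominated convergence on the finite measure $\mu\otimes\mu$ gives
\[\|\mathcal T_{H_\xi}-\mathcal T_{H_{\xi_0}}\|_{HS}^2=\iint|R_c(H_\xi(x),H_\xi(y))-R_c(H_{\xi_0}(x),H_{\xi_0}(y))|^2\,d\mu\,d\mu\to0.\]
This works because the integrand is bounded and tends to zero a.e.; infinite values of $H$ at poles sit on null sets, and $R_c$ extends continuously to $\pm\infty$ with value $0$. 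It follows that $\Lambda(H_\xi)$ is continuous, since the top of the spectrum is $1$-Lipschitz in operator norm, and so is $\kappa_\xi=\Lambda^{-1}$, using $\Lambda>0$. The top eigenvalue is simple by Proposition~\ref{prop:kernel}, so the Riesz projection onto it is norm continuous. The positive normalized eigenfunction $e_{H_\xi}$ is the normalized image of the previous one, with sign fixed by positivity, so it is continuous in $L^2(\mu)$. On the diagonal the operator is rank one, $\mathcal T=(2c)^{-1}\,\mathbf 1\langle\mathbf 1,\cdot\rangle$, and is consistent with the above.

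\emph{Step 3: $\eta$ and $F$.} By \eqref{eq:profile},
\[\eta_\xi=u_{H_\xi}(0)=\kappa_\xi\int R_c(0,H_\xi(y))e_{H_\xi}(y)\,d\mu(y).\]
This is continuous: the first factor of the integrand converges boundedly a.e., and the second converges in $L^2$. Also $\eta_\xi>0$ everywhere. Continuity on the compact set $\mathcal P$ gives $\eta\ge\eta_*>0$. Uniform boundedness of $F_\xi$ follows from $e_H=u_H\circ H$ together with $|u_H|\le\sqrt{\mathcal N(u_H)/(2c)}=\sqrt{\kappa/(2c)}$ from \eqref{eq:norm}, and $\kappa$ is bounded on $\mathcal P$. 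So $|F_\xi|\le\sqrt{\kappa_{\max}/(2c)}/\eta_*$. Next, $e_{H_\xi}\to e_{H_{\xi_0}}$ in $L^2$ while $\chi_\xi\to\chi_{\xi_0}$ a.e.\ and boundedly. Hence $F_\xi\to F_{\xi_0}$ in $L^2(\mu)$. Uniform boundedness upgrades this to $L^p$ for every $p<\infty$ by interpolation, and to weak-$*$ convergence in $L^\infty$, which gives convergence of moments against every fixed $L^1(\mu)$ weight.

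The main obstacle is Step 1 at boundary and diagonal parameters. There I must verify carefully that the extended definitions of $H$ and $\chi$ are genuine a.e.\ limits in both mixed regimes, namely one pole on $\partial\D$ and colliding poles. I will handle this by writing $B_a$ explicitly and using locally uniform convergence on $\D\setminus\{a_0,b_0\}$, a full-measure set.
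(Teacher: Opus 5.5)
Your proof is correct and follows the same skeleton as the paper's proof:
\begin{itemize}
\item almost-everywhere convergence of $H_\xi,\chi_\xi$ off the limiting poles;
\item dominated convergence of the bounded kernels, giving Hilbert--Schmidt and hence operator-norm convergence of $\mathcal T_{H_\xi}$;
\item continuity of the simple top eigenvalue and of its positive eigenfunction;
\item finally, $\eta_\xi$ and $F_\xi$.
\end{itemize}

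The differences lie in two sub-steps. For the eigenfunction, you use norm continuity of the Riesz projection, with the sign fixed by $\langle e_{H_{\xi_0}},e_{H_\xi}\rangle_{L^2(\mu)}>0$. The paper instead uses $e_n-\Lambda_n^{-1}\mathcal T e_n\to0$, compactness of $\mathcal T$, and uniqueness of the nonnegative unit eigenfunction; both arguments work. For $\eta$ and $F$, you use the scalar formula $\eta_\xi=\kappa_\xi\langle R_c(0,H_\xi(\cdot)),e_{H_\xi}\rangle_{L^2(\mu)}$ and $L^2$ convergence of $e_{H_\xi}\chi_\xi$, then interpolation with the uniform bound. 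The paper instead computes $\|J_n^*-J^*\|_{\rm HS}^2=c^{-1}\int(1-e^{-c|H_{\xi_n}-H_\xi|})\,d\mu$. That gives strong $\mathcal H$, hence uniform, convergence of the profiles $u_{H_\xi}$, and then pointwise convergence of the representative $\eta_\xi^{-1}u_{H_\xi}(H_\xi)\chi_\xi$.

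Your route is shorter and proves exactly what the lemma asserts. The paper's route additionally gives the profile convergence that it invokes in the remark right after the lemma.

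Your hedge about the diagonal phase is unnecessary. The limit there is $1$, which matches the convention $\chi_{a,a}=1$. At corner parameters the limit $a_0\bar b_0$ likewise matches the extended definition, and $H=0$ there as well, so your rank-one observation applies to corners too.
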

\begin{proof}
Along a convergent parameter sequence the Blaschke quotients, times and phases converge almost everywhere, except possibly at two limiting interior poles. The bounded kernels converge almost everywhere on the product space, hence in Hilbert--Schmidt norm. Therefore they also converge in operator norm, and the variational characterization of the largest eigenvalue gives $\Lambda_n\to\Lambda>0$. Write $\mathcal T_n=\mathcal T_{H_{\xi_n}}$ and $\mathcal T=\mathcal T_{H_\xi}$. The eigenvalue equation gives
\[
 e_n-\Lambda_n^{-1}\mathcal T e_n
 =\Lambda_n^{-1}(\mathcal T_n-\mathcal T)e_n\longrightarrow0
 \quad\text{in }L^2(\mu).
\]
Compactness of $\mathcal T$ gives precompactness of the unit $e_n$. Every subsequential limit is its unique nonnegative unit eigenfunction for the largest eigenvalue, so the whole sequence converges.

Write $J_n=J_{H_{\xi_n}}$ and $J=J_{H_\xi}$. The reproducing identity gives
\begin{equation}
 \|J_n^*-J^*\|_{\mathrm{HS}(L^2(\mu),\mathcal H)}^2
 =\frac1c\int_\D\bigl(1-e^{-c|H_{\xi_n}-H_\xi|}\bigr)\,d\mu\longrightarrow0.
 \label{eq:adjoint-profile-continuity}
\end{equation}
Indeed the squared $\mathcal H$-norm of $R_c(\cdot,t)-R_c(\cdot,s)$ is $(1-e^{-c|t-s|})/c$; integration gives the Hilbert--Schmidt identity, and dominated convergence applies. Hence
$u_{H_{\xi_n}}=\kappa_{\xi_n}J_n^*e_n\to u_{H_\xi}$ strongly in $\mathcal H$. The evaluation bound gives uniform profile convergence and $\eta_{\xi_n}\to\eta_\xi>0$. On compact $\mathcal P$, $\inf\eta_\xi>0$, $\sup\kappa_\xi<\infty$ and $\|u_{H_\xi}\|_{\mathcal H}^2=\kappa_\xi$ give a uniform bound on the trial functions. Choose the representative $F_\xi(z)=\eta_\xi^{-1}u_{H_\xi}(H_\xi(z))\chi_\xi(z)$ off the poles. Uniform profile convergence and almost-everywhere convergence of the times and phases give pointwise convergence. Dominated convergence proves the finite-$p$ and fixed-weight claims.
\end{proof}
Strong $\mathcal H$ convergence of the profiles does not imply $H^1(\Omega)$ convergence after composition with varying $H_\xi$; see the coalescing-pole example below.

The moment limits hold for each fixed weight, rather than uniformly over an $L^1$ unit ball. Varying weights are allowed when $w_n\to w$ strongly in $L^1(\mu)$: their additional contribution is bounded by $C_\mu\|w_n-w\|_1$, and the fixed-$w$ term tends to zero.

For any time map $h$, reflection gives $\mathcal T_{-h}=\mathcal T_h$, $e_{-h}=e_h$ and $u_{-h}(t)=u_h(-t)$. The unique positive minimizing profile is even if and only if $\rho_h$ is reflection invariant: uniqueness proves sufficiency; reflecting \eqref{eq:measureode} and dividing by the positive even profile proves necessity. Thus exchanging poles conjugates $F$, but does not make each profile even.

For $a\in\D$, write $\psi_a(z)=-c^{-1}\log|B_a(z)|$ in the auxiliary coordinate, and define the one-pole trial function by
\begin{equation}
 h_a(z)=\frac{u_{\psi_a}(\psi_a(z))}{u_{\psi_a}(0)}
         \frac{B_a(z)}{|B_a(z)|},\qquad z\ne a.
 \label{eq:one-pole-normalization}
\end{equation}
The profile is normalized by its value at zero. If $a\in\D$ and $|b|=1$, then $B_b=-b$ and $H_{a,b}=\psi_a$: a boundary parameter leaves an interior Green pole. The reflection identities give
\begin{align}
 F_{a,b}&=-\bar b\,h_a&&(|b|=1),&
 F_{a,b}&=-a\,\overline{h_b}&&(|a|=1),\notag\\
 F_{a,a}&=1,& h_a&=-a&&(|a|=1).
 \label{eq:faces}
\end{align}
At a coalescing-pole parameter $H=0$, the time measure is $A\delta_0$ and the kernel is constant, $\kappa=2c/A$, and $e=\eta=A^{-1/2}$; hence the normalization extends continuously to this parameter.

\subsection{Simultaneous vanishing of the moments}
\begin{proposition}[simultaneous vanishing of two complex moments]\label{prop:centering}
For each fixed real $w\in L^1(\mu)$ with $\int w\,d\mu=0$, either an interior one-pole field satisfies $\int h_a=\int wh_a=0$, or a distinct interior pair satisfies
\begin{equation}
 \int F_{a,b}\,d\mu=\int wF_{a,b}\,d\mu=0.
 \label{eq:centering}
\end{equation}
\end{proposition}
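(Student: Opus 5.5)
We will argue by a degree computation on the closed bidisk $\mathcal P=\overline\D^2$, using the continuous map
\[
 \mathcal M:\mathcal P\to\C^2,\qquad
 \mathcal M(\xi)=\Bigl(\int F_\xi\,d\mu,\ \int wF_\xi\,d\mu\Bigr),
\]
which is continuous by Lemma~\ref{lem:continuity}; the weight $w\in L^1(\mu)$ is fixed. Assume, for contradiction, that no interior one-pole field and no distinct interior pair satisfies \eqref{eq:centering}. We aim to show that $\mathcal M$ then has no zero on $\mathcal P$. By \eqref{eq:faces} the diagonal gives $\mathcal M(a,a)=(A,0)\neq0$, since $\int w\,d\mu=0$. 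On the face $|b|=1$ with $a\in\D$ we have $\mathcal M(a,b)=-\bar b\,(\int h_a,\int wh_a)$, which is nonzero by hypothesis; the face $|a|=1$ is symmetric. On the corner torus $|a|=|b|=1$ the vector $F_{a,b}=a\bar b\cdot$const is a nonzero constant, so $\mathcal M\ne0$ there as well. Hence $\mathcal M$ maps $\mathcal P$ into $\C^2\setminus\{0\}\simeq S^3$. Since $\mathcal P\cong B^4$ is contractible, the restriction $\mathcal M|_{\partial\mathcal P}:S^3\to S^3$ must have degree zero. We will contradict this by computing the degree to be nonzero.

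To compute the boundary degree we first simplify the boundary map using the face identities. On $\partial\mathcal P=(\overline\D\times S^1)\cup(S^1\times\overline\D)$, the map on the first solid torus is $(a,b)\mapsto-\bar b\,m(a)$ with $m(a)=(\int h_a,\int wh_a)$, and on the second it is $(a,b)\mapsto-a\,\overline{m(b)}$. Here $m:\overline\D\to\C^2\setminus\{0\}$ has boundary values $m(a)=(-aA,0)$ for $|a|=1$. The next step is to homotope $m$, rel $\partial\D$ and avoiding $0$, to the map $a\mapsto(-aA,\,\epsilon(1-|a|^2))$. This is possible because the space of maps $\overline\D\to S^3$ with prescribed boundary values is connected, $\pi_2(S^3)=0$. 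After this normalization the boundary map is explicit and homotopic to the standard join map $S^1*S^1\to S^3$, $(a,b)\mapsto(a\bar b,\ldots)$, whose degree we then compute to be $\pm1$.

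The step we expect to be the main obstacle is this explicit degree computation. The two solid tori glue along the corner torus, where the second factor is conjugated, and the orientation bookkeeping there must be done carefully. The cleanest route is to compute a preimage count for a regular value, say $(0,\epsilon)$, after the normalizing homotopy. Alternatively one can identify the map with a Hopf-type clutching in which the pair $(a,\bar b)$ enters with opposite windings; these then combine rather than cancel, because the moments are taken on one face for $h_a$ and on the other for $\overline{h_b}$. Once the degree is shown to be nonzero, $\mathcal M$ must vanish somewhere in $\mathcal P$. The boundary analysis then places that zero at a distinct interior pair, which gives \eqref{eq:centering}. Alternatively the zero lies on a one-pole face, which gives the one-pole alternative.
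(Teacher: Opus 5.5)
Your architecture coincides with the paper's. It uses the face identities and a homotopy of the one-pole moment map $N(a)=(\int h_a,\int wh_a)$, rel $\partial\D$ and through nonzero maps, to $A(-a,1-|a|^2)$, justified by $\pi_2(S^3)=0$. This homotopy extends to both faces; the two face homotopies glue because on the corner torus both formulas reduce to $(Aa\bar b,0)$. A nonzero boundary degree then contradicts extension over $\mathcal P\cong B^4$.

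The gap is in the one step with real content: you leave it undone and state its outcome incorrectly. After normalization the boundary map is
\[
 Z=\bigl(a\bar b,\,-\bar b(1-|a|^2)\bigr)\ \ (|b|=1),\qquad
 Z=\bigl(a\bar b,\,-a(1-|b|^2)\bigr)\ \ (|a|=1).
\]
It is not homotopic to a join-type homeomorphism of degree $\pm1$. It sends both core circles $\{a=0\}$ and $\{b=0\}$ of $\partial\mathcal P$ to the same circle $\{z_1=0\}$, and the corner torus to $\{z_2=0\}$. So each of the two solid tori maps onto all of $S^3$ and contributes separately.

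Concretely, the direction $(0,1)$ has exactly two preimages: $(0,-1)$ on $|b|=1$ and $(-1,0)$ on $|a|=1$. Use the outward-first charts $(x,y,t)\mapsto(x+iy,-e^{it})$ and $(t,x,y)\mapsto(-e^{it},x+iy)$, and the positive target chart $(\Re Z_1,\Im Z_1,\Im Z_2)$ at $(0,1)$. The modulus of $Z$ has vanishing derivative at both points. The Jacobians are $-I_3$ and a signed permutation matrix of determinant $-1$. Hence the degree is $-2$, not $\pm1$.

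Because there are exactly two preimages, the proof stands or falls on their signs agreeing. A sign slip at either point gives degree $0$ and no contradiction. Your remark that the windings ``combine rather than cancel'' therefore has to be replaced by this computation. With $-2$ in place of $\pm1$, the rest of your argument goes through unchanged.
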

\begin{proof}
Put $N(a)=(\int h_a,\int wh_a)$ and $\mathcal M(a,b)=(\int F_{a,b},\int wF_{a,b})$.
Continuity and the reality of $w$ give
\begin{align}
 N(a)&=(-Aa,0)&&(|a|=1),\notag\\
 \mathcal M(a,b)&=-\bar bN(a)&&(|b|=1),\notag\\
 \mathcal M(a,b)&=-a\overline{N(b)}&&(|a|=1),\qquad
 \mathcal M(a,a)=(A,0).
 \label{eq:momentfaces}
\end{align}
If $N$ has a zero it is interior, giving the first alternative. Otherwise we deform the boundary moment map to a model whose degree can be computed explicitly. Set
\[
 f_0=N/|N|,\qquad f_1=N_*/|N_*|,\qquad
 N_*(a)=A(-a,1-|a|^2).
\]
The maps $f_0,f_1:\overline{\mathbb D}\to S^3$ agree on $\partial\mathbb D$, where both equal $(-a,0)$. Gluing $f_0$ to the reverse of $f_1$ along this common boundary gives a map $S^2\to S^3$ representing the obstruction to a homotopy relative to $\partial\mathbb D$. Since $\pi_2(S^3)=0$ \cite[Corollary 4.9]{Hatcher}, the obstruction vanishes and there is a homotopy $f_s$ from $f_0$ to $f_1$ fixed on the boundary. Now set
\[
 L_s(a)=(1-s)|N(a)|+s|N_*(a)|>0.
\]
Then $N_s:=L_s f_s$ is a homotopy through nonzero $\mathbb C^2$-valued maps from $N$ to $N_*$, fixed on the boundary because both boundary lengths equal $A$. Extend it to the two boundary faces by
\begin{equation}
 \mathcal M_s^{(b)}(a,b)=-\bar b\,N_s(a)\quad (|b|=1),
 \qquad
 \mathcal M_s^{(a)}(a,b)=-a\,\overline{N_s(b)}\quad (|a|=1).
 \label{eq:face-homotopy}
\end{equation}
Each face map is nonvanishing. On the corner torus, where $N_s(a)=(-Aa,0)$ and $N_s(b)=(-Ab,0)$ for every $s$, both formulas in \eqref{eq:face-homotopy} equal $(Aa\bar b,0)$. Hence the two face homotopies glue continuously and remain nonvanishing throughout. At $s=0$ they give $\mathcal M|_{\partial\mathcal P}$, while at $s=1$ they give the model boundary map below.

Suppressing the factor $A$, the final boundary map is
\begin{equation}
 \begin{aligned}
 Z_b(a,b)&=\bigl(a\bar b,-\bar b(1-|a|^2)\bigr),&& |b|=1,\\
 Z_a(a,b)&=\bigl(a\bar b,-a(1-|b|^2)\bigr),&& |a|=1.
 \end{aligned}
 \label{eq:degree-faces}
\end{equation}
These maps agree on the corner torus and never vanish. We compute the degree of their normalization $\widehat Z:\partial\mathcal P\to S^3$ directly. Orient $\mathcal P$ by $(\Re a,\Im a,\Re b,\Im b)$ and each boundary face by the outward-first convention. The target $(0,1)\in\C^2$ has exactly two preimages: $(a,b)=(0,-1)$ on the first face and $(-1,0)$ on the second. Neither lies on the corner torus. A positive target chart at $(0,1)$ is $(\Re Z_1,\Im Z_1,\Im Z_2)$.

At $(0,-1)$ use $a=x+iy$, $b=-e^{it}$ with source coordinates $(x,y,t)$. At $(-1,0)$ use $a=-e^{it}$, $b=x+iy$ with coordinates $(t,x,y)$. Both are positive boundary charts: adjoining the outward normal first gives, respectively, $(-e_3,e_1,e_2,-e_4)$ and $(-e_1,-e_2,e_3,e_4)$ in the standard ambient basis. In the stated target chart the two derivatives are
\begin{equation}
 D\widehat Z\big|_{(0,-1)}=
 \begin{pmatrix}-1&0&0\\0&-1&0\\0&0&-1\end{pmatrix},
 \qquad
 D\widehat Z\big|_{(-1,0)}=
 \begin{pmatrix}0&-1&0\\0&0&1\\1&0&0\end{pmatrix}.
 \label{eq:degreejac}
\end{equation}
Normalization does not change these tangent derivatives, since the derivative of the length vanishes at both points. Each determinant is $-1$, so the boundary degree is $-2$. The two regular preimages lie away from the corner torus, so these local degrees compute the degree of the continuous boundary map. The closed bidisk $\mathcal P=\overline\D\times\overline\D$ is a compact convex body in $\mathbb R^4$, hence is homeomorphic to the closed four-ball and has boundary homeomorphic to $S^3$. If $\mathcal M$ were nonvanishing on all of $\mathcal P$, its normalization would extend the normalized boundary map over this four-ball, forcing boundary degree zero. Thus $\mathcal M$ has a zero; \eqref{eq:momentfaces} excludes the boundary and the diagonal, giving the second alternative.
\end{proof}
The selection may depend on the fixed weight $w$. The degree proves existence, not a count of zeros, a common parameter for all weights, or a continuous choice as the domain varies.

\Needspace{12\baselineskip}
\subsection{The one-pole alternative and the spectral conclusion}
\begin{lemma}[Dirichlet energy of the one-pole alternative]\label{lem:onepole-boundary}
For interior $a$, $h_a\in H^1(\Omega;\mathbb C)$ is nonzero. With $M_a=\int|h_a|^2$ and normalization $v(0)=1$,
\begin{equation}
 E(h_a)=\kappa_a M_a-c>0,\qquad
 Q_{h_a}=\frac{E(h_a)}2I_2,\qquad E(h_a)/M_a<\beta.
 \label{eq:boundaryquot}
\end{equation}
Here $\kappa_a$ is the whole-line kernel value at the boundary parameter.
\end{lemma}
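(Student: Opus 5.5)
The plan is to read $h_a$ as a single-branch instance of the construction in Lemma~\ref{lem:energy}, with the negative half-line carrying no mass, and then to compare its kernel value with the two-branch model in the limit where one branch vanishes. For interior $a$ the time map is $H=\psi_a>0$ off the pole, so $\rho_{\psi_a}$ is supported in $(0,\infty)$. It has density $B(t)$ from \eqref{eq:green-time-density} with total mass $A$, and it has no atom at $0$ because $\partial\Omega$ is $\mu$-null. On $t<0$ the measure equation \eqref{eq:measureode} reads $-u''+c^2u=0$. Since the minimizer lies in $\mathcal H$, it must be $u(0)e^{ct}$ there.

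\textbf{Step 1 (energy identity).} With $v=u_{\psi_a}/u_{\psi_a}(0)$, so that $v(0)=1$, the tail identity \eqref{eq:finite-clock-tail} gives that the negative half-line contributes exactly $c$ to $\mathcal N(v)$. Hence
\[
 \mathcal N(v)=\int_0^\infty(v'^2+c^2v^2)\,dt+c .
\]
On the other hand, Proposition~\ref{prop:kernel} gives $\mathcal N(v)=\kappa_a D(v)=\kappa_a M_a$, where $M_a=\int_0^\infty v^2B\,dt$ by coarea.

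Next I compute $E(h_a)$ as in Lemma~\ref{lem:energy}, now on the single conformal disk $\Omega$ with coordinate $B_a$. The one-form $\alpha=d\arg B_a$ is $g$-orthogonal to $d\psi_a$ and satisfies $|\alpha|_g=c|d\psi_a|_g$. This gives
\[
 |\nabla h_a|^2=(v'(\psi_a)^2+c^2v(\psi_a)^2)\,|\nabla\psi_a|^2 .
\]
Coarea together with unit flux (the one-pole analogue of \eqref{eq:flux}) then yields $E(h_a)=\int_0^\infty(v'^2+c^2v^2)\,dt$, and therefore $E(h_a)=\kappa_aM_a-c$.

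Positivity of $E(h_a)$ holds because $E(h_a)=0$ would force $v'\equiv0$ and $v\equiv0$ on $(0,\infty)$, which contradicts $v(0)=1$. The isotropy $Q_{h_a}=\tfrac{E(h_a)}2I_2$ follows from the same polar computation on a single branch disk, applied to $g(r)e^{i\theta}$.

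\textbf{Step 2 (membership in $H^1$).} This is the step I expect to be the main technical obstacle, because $v(0)=1\neq0$ means the truncation by compactly supported profiles used in Lemma~\ref{lem:energy} no longer approximates near $\partial\Omega$. I would instead argue that $h_a$ is bounded by $\sup v<\infty$ (using \eqref{eq:norm}) and smooth away from $a$. Near $a$ it is controlled by the decay $v=O(e^{-ct})$. Its pointwise gradient lies in $L^2$ with the norm computed in Step 1.

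On a bounded Lipschitz domain, a bounded function that is $W^{1,1}_{\rm loc}$ away from a point and has square-integrable gradient belongs to $H^1(\Omega)$. The only extra check is that the point singularity at $a$ is removable in $H^1$, since points have zero capacity in dimension two. The form domain of Lemma~\ref{lem:LL-form} is exactly $H^1(\Omega)$, so this gives $h_a\in H^1(\Omega;\mathbb C)$. It is nonzero because $|h_a|\to1$ at $\partial\Omega$.

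\textbf{Step 3 (the strict bound $E/M<\beta$).} From Step 1 and $M_a>0$,
\[
 \frac{E(h_a)}{M_a}=\kappa_a-\frac{c}{M_a}<\kappa_a ,
\]
so it suffices to show $\kappa_a\le\beta$.

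By \eqref{eq:order}, the branch coefficient satisfies $G\ge G_K$ on $(0,A)$. The pointwise kernel argument of Proposition~\ref{prop:compression} then gives $\kappa_a\le\widehat\kappa$, where $\widehat\kappa$ is the kernel value of the model branch of area $A$ with an empty second branch. This uses only same-branch distances, together with distances to the massless tail. To bound $\widehat\kappa$, I would approximate it by $\kappa_K(A-s,s)$ as $s\downarrow0$. The corresponding kernels on the common area space converge in Hilbert--Schmidt norm, as in Lemma~\ref{lem:continuity}, because the second branch carries mass $s\to0$ and the first branch's Green-time coordinate converges. Hence $\widehat\kappa=\lim_{s\downarrow0}\kappa_K(A-s,s)\le\beta$ by Proposition~\ref{prop:modelsplit}, which gives $\kappa_a\le\beta$.

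Strictness in the final bound comes from the term $c/M_a>0$, so no equality analysis is needed there.
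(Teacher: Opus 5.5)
Your proof is correct. Steps~1 and~2 reach the same conclusions as the paper: the exponential tail on $t<0$ contributes exactly $c$, and the single-branch version of the calculation in Lemma~\ref{lem:energy} gives $E(h_a)=\kappa_aM_a-c$ and the scalar energy matrix.

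\textbf{Step~2.} The obstacle you anticipate is not real. Profiles in $C_c^\infty(\mathbb R)$ vanish only for large $|t|$, that is, near the pole. Near $\partial\Omega$, where $\psi_a\to0$, the approximants are simply bounded and smooth, and $H^1(\Omega)$ is the Neumann space with no boundary condition. This is exactly how the paper proves admissibility. Your bounded-function-plus-removable-point argument is an equally valid substitute, but you do not need it.

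\textbf{Step~3.} This is where your route genuinely differs. The paper gets $\kappa_a\le\beta$ in one line. The boundary parameter $(a,b)$ with $|b|=1$ is a limit of distinct interior pairs. $\kappa_\xi$ is continuous on the closed bidisk by Lemma~\ref{lem:continuity}, and $\kappa(H)<\beta$ at interior pairs by Theorem~\ref{thm:transmission}.

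You instead compare the one-pole Green function directly with the single model disk of area $A$. Your ingredients are:
\begin{itemize}
\item Cauchy--Schwarz, unit flux and Bol's inequality on its compact superlevel disks, giving $G\ge G_K$ on $(0,A)$;
\item the one-branch form of Proposition~\ref{prop:compression}, where only same-branch distances occur (the massless negative half-line never enters the kernel on $L^2(\mu)$, so there are no ``distances to the tail'' to control);
\item the degenerate limit $\kappa_K(A-s,s)\to\widehat\kappa$ as $s\downarrow0$, which is justified by a.e.\ convergence of the time maps on the common space $(0,A)$ and Hilbert--Schmidt convergence of the bounded kernels, followed by Proposition~\ref{prop:modelsplit}.
\end{itemize}

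\textbf{Comparison.} The paper's route is shorter, given continuity machinery it needs anyway for the degree argument, but it rests on the interior transmission theorem. Yours is independent of Theorem~\ref{thm:transmission} and of domain-parameter continuity. It also yields the intermediate comparison $\kappa_a\le\widehat\kappa$ with the full-area model disk. The cost is checking the routine one-branch compression and the degenerate model limit yourself.
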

\begin{proof}
The one-pole Green-time variable is nonnegative, so its pushforward measure vanishes on the negative half-line. There \eqref{eq:measureode}, finite energy and $v(0)=1$ give
\begin{equation}
 v(t)=e^{ct},\qquad
 \int_{-\infty}^0(v'^2+c^2v^2)\,dt=c.
 \label{eq:tail}
\end{equation}

The physical trial function occupies the other half-line. The calculation of Lemma~\ref{lem:energy}, restricted to $t\ge0$, gives
\[
 E(h_a)=\int_0^\infty(v'^2+c^2v^2)\,dt
       =\mathcal N(v)-c=\kappa_aM_a-c>0.
\]
The same angular integration makes the Dirichlet-energy matrix of its real and imaginary parts scalar. Smooth approximation of the whole-line profile, as in that lemma, proves $H^1$ admissibility.

The measure has no atom at zero, so \eqref{eq:atomic-interface} and \eqref{eq:tail} give $v'(0+)=c$. In the coordinate $\zeta=B_a$, the radial factor $f(r)=v(-c^{-1}\log r)$ therefore satisfies $f'(1-)=-f(1)=-1$. This is the slope of the minimizing radial factor in the auxiliary disk, not a Neumann boundary condition on the physical domain.

Finally, kernel continuity from distinct interior pairs gives $\kappa_a\le\beta$. Subtracting $c/M_a>0$ from this bound proves the strict quotient estimate.
\end{proof}
Physical energy equals $\kappa M$ for interior two-pole parameters and $\kappa M-c$ for a one-pole boundary parameter. With both parameters on $\partial\D$, the trial function is constant of modulus one and has zero energy.

At coalescence the limiting function is the constant $1$, also with zero energy. For interior two-pole functions normalized by $v(0)=1$, however,
\[
 F_n\longrightarrow1\quad\text{in }L^p(\Omega)\ (1\le p<\infty),
 \qquad E(F_n)\longrightarrow2c=4\pi\ne E(1).
\]
Thus moment continuity, rather than continuity of physical energy, is the relevant property at this stratum.

\begin{lemma}[Shifted reciprocal Ritz estimate]\label{lem:shifted-ritz}
Fix a real $L^2(\Omega,dv_g)$-normalized eigenfunction $\phi_1$ for the first positive Neumann eigenvalue. Suppose $F=X+iY\in H^1(\Omega;\mathbb C)$ satisfies $X,Y\perp\operatorname{span}\{1,\phi_1\}$ and $Q_F=(E/2)I_2$, where $E=E(F)>0$. Then
\begin{equation}
 \lambda_2^{-1}+\lambda_3^{-1}\ge 2M(F)/E(F).
 \label{eq:shifted-ritz}
\end{equation}
\end{lemma}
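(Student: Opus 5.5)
The plan is a two-mode form of Hersch's trace principle, applied on the orthogonal complement of $V_1=\operatorname{span}\{1,\phi_1\}$. Let $P$ be the orthogonal projection onto the spectral subspace of the Neumann operator with eigenvalues $\lambda_2,\lambda_3,\dots$. Because the eigenvalues are counted with multiplicity and $\phi_1$ is one fixed first eigenfunction, this complement is spanned by an orthonormal family $\phi_2,\phi_3,\dots$ with eigenvalues $\lambda_2\le\lambda_3\le\cdots$. If $\lambda_1$ is multiple, the other first eigenfunctions simply appear among them with $\lambda_j=\lambda_1\le\lambda_2$. That only helps, since the claimed bound involves $\lambda_2,\lambda_3$ as indexed.

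For $\phi\in H^1$ orthogonal to $V_1$, the form identity gives $E(\phi)=\sum_{j\ge2}\lambda_j\langle\phi,\phi_j\rangle^2$ and $M(\phi)=\sum_{j\ge2}\langle\phi,\phi_j\rangle^2$. Apply this to $X$ and $Y$. Write $x_j=\langle X,\phi_j\rangle$ and $y_j=\langle Y,\phi_j\rangle$, and let $P_j=(x_j,y_j)^T$. Then
\[
 Q_F=\sum_{j\ge2}\lambda_jP_jP_j^T,\qquad G:=\bigl(M(X_i,X_k)\bigr)_{i,k}=\sum_{j\ge2}P_jP_j^T ,
\]
where the first sum converges since the energy series converges.

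Next use the hypothesis $Q_F=(E/2)I_2$, so that $\sum_j\lambda_jP_jP_j^T=(E/2)I_2$. Set $w_j=(2/E)\lambda_j|P_j|^2\ge0$. Taking traces gives $\sum_jw_j=2$. Testing against unit vectors gives $\sum_jw_j\langle e,\hat P_j\rangle^2=1$ for every unit $e$, so each $w_j\le1$. The mass is
\[
 M(F)=\operatorname{tr}G=\sum_j|P_j|^2=\frac E2\sum_j\frac{w_j}{\lambda_j}.
\]
Thus $2M(F)/E=\sum_jw_j/\lambda_j$, and it remains to maximize $\sum_jw_j\lambda_j^{-1}$ subject to $0\le w_j\le1$ and $\sum_jw_j=2$. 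Since $\lambda_j^{-1}$ is nonincreasing in $j$, the bathtub (rearrangement) argument puts weight $1$ on $j=2$ and $j=3$, giving the bound $\lambda_2^{-1}+\lambda_3^{-1}$. This proves \eqref{eq:shifted-ritz}.

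The main point needing care is convergence and justification. The expansion of $E$ on $H^1$ holds through the spectral theorem for the closed form of Lemma~\ref{lem:LL-form}, including the off-diagonal polarization $E(X,Y)=\sum_j\lambda_jx_jy_j$. Everything else is finite-dimensional linear algebra. The bound $w_j\le1$ is the key step: it uses the isotropy of $Q_F$ and not merely its trace, since without it all the weight could concentrate on $\lambda_2$. Linear independence of $X$ and $Y$ is not needed.
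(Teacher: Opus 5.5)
Your argument is correct and is essentially the paper's proof: your weights $w_j$ are exactly the paper's $a_k=\lambda_k\sum_{i=1}^2\langle h_i,\phi_k\rangle^2$ with $h_1=\sqrt{2/E}\,X$, $h_2=\sqrt{2/E}\,Y$. The paper gets $a_k\le1$ from Bessel's inequality for the energy-orthonormal pair $h_1,h_2$ tested on the energy-unit vectors $\phi_k/\sqrt{\lambda_k}$; this is the dual form of your unit-vector test of $\sum_j\lambda_jP_jP_j^T=(E/2)I_2$. It gets $\sum a_k=2$ from Parseval and then uses the same bound $a_2/\lambda_2+(2-a_2)/\lambda_3\le\lambda_2^{-1}+\lambda_3^{-1}$.

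Your multiplicity remark is harmless but slightly off: any further first eigenfunction in the complement has eigenvalue $\lambda_1=\lambda_2$, so the spectrum on the complement is exactly $\lambda_2\le\lambda_3\le\cdots$.
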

\begin{proof}
Put $h_1=\sqrt{2/E}X$ and $h_2=\sqrt{2/E}Y$. They are orthonormal for the Dirichlet inner product on $\mathcal V=H^1(\Omega)\cap\{1,\phi_1\}^{\perp_{L^2}}$. On this closed subspace, $E(v)\ge\lambda_2M(v)$ makes the energy norm equivalent to the form norm, so $\mathcal V$ is complete. The equivalence is for the fixed domain and metric. Choose a mass-orthonormal eigenbasis in spectral order containing the fixed $\phi_1$. The form spectral theorem gives the energy-orthonormal basis $\phi_k/\sqrt{\lambda_k}$, $k\ge2$, of $\mathcal V$. Bessel's inequality and Parseval give
\[
 a_k=\lambda_k\sum_{i=1}^2|\langle h_i,\phi_k\rangle|^2\in[0,1],
 \qquad \sum_{k\ge2}a_k=2,
 \qquad \frac{2M(F)}{E(F)}=\sum_{k\ge2}\frac{a_k}{\lambda_k}.
\]
Since $\lambda_k\ge\lambda_3$ for $k\ge3$, the last sum is at most
$a_2/\lambda_2+(2-a_2)/\lambda_3\le1/\lambda_2+1/\lambda_3$.
The argument uses energy orthogonality, not mass isotropy, and removes only the chosen $\phi_1$, also when $\lambda_1$ is multiple.
\end{proof}
This is the two-mode shifted trace variational estimate of Hersch~\cite{Hersch1961}; see also~\cite{HileXu1993,CY}. Its exact remainder, equality condition, and spectral tail estimate are recorded together in Lemma~\ref{lem:ritz-remainder}.

\Needspace{13\baselineskip}
\begin{proof}[Proof of \cref{thm:main}]
In either setting, Lemma~\ref{lem:LL-form} supplies a real $L^2(\Omega,dv_g)$-normalized eigenfunction $\phi_1$ associated with the first positive Neumann eigenvalue. Put $w=\phi_1\circ\Phi$. Then
\[
 \int_\D w\,d\mu=0,\qquad
 \int_\D|w|\,d\mu
 =\int_\Omega|\phi_1|\,dv_g\le\sqrt A,
\]
so Proposition~\ref{prop:centering} gives a trial function whose real components are mass-orthogonal to $1$ and $\phi_1$.

For an interior pair, Lemma~\ref{lem:energy} and \eqref{eq:positiveallpair} give
\[
 Q_F=\frac{E(F)}2I_2,\qquad E(F)/M(F)=\kappa(H)<\beta.
\]
For the one-pole alternative, Lemma~\ref{lem:onepole-boundary} instead gives
\[
 Q_{h_a}=\frac{E(h_a)}2I_2,\qquad
 E(h_a)/M_a=\kappa_a-c/M_a<\beta.
\]
Applying Lemma~\ref{lem:shifted-ritz} in either case yields
\[
 \lambda_2^{-1}+\lambda_3^{-1}\ge2M/E>2/\beta.
\]
Since $2/\lambda_2\ge1/\lambda_2+1/\lambda_3$, the scalar bound follows.
\end{proof}

\section{Sharpness with exact area and curvature}\label{sec:sharpgeometry}
All domains in this section lie in the fixed model surface $\mathbb M_K$.
Choose $R$ with $V_K(R)=m$ and a unit-speed geodesic $\gamma$ through $o$. For small $\delta>0$ put
\begin{equation}
 p_\delta^\pm=\gamma(\pm(R-\delta/2)),\qquad d_\delta=2R-\delta.
 \label{eq:centers}
\end{equation}
The two limiting radius-$R$ disks are tangent only at $o$. On the sphere $R$ is below a hemisphere and $2R<\pi/\sqrt K$, so the tangency is along the unique shorter center geodesic.

\begin{lemma}[lens area at a nondegenerate tangency]\label{lem:lens-area}
Let $p_\pm=\gamma(\pm R)$ and assume, when $K>0$, that $R<\pi/(2\sqrt K)$. For $\epsilon>0$ small set
\[
 L_\epsilon=B_K(p_+,R+\epsilon)\cap B_K(p_-,R+\epsilon).
\]
Then $|L_\epsilon|=O(\epsilon^{3/2})$ as $\epsilon\downarrow0$.
\end{lemma}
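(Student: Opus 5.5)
The estimate is local near the tangency point $o$. We work in geodesic polar (normal) coordinates, or better in Fermi coordinates along $\gamma$: write a point as $\exp_{\gamma(x)}(y\,\nu)$, where $x$ is arclength along $\gamma$ with $o=\gamma(0)$, $\nu$ is the unit normal, and $y$ is signed normal distance. In these coordinates the metric is $dy^2+J(y)^2dx^2$, with $J(y)=\cos(\sqrt K y)$, $1$, or $\cosh(\sqrt{-K}y)$. The area element $J\,dx\,dy$ is comparable to $dx\,dy$ near $o$, so it suffices to bound the Euclidean coordinate area of $L_\epsilon$. For $\epsilon$ small, $L_\epsilon$ lies in a small neighborhood of $o$: the closed disks $\overline{B_K(p_\pm,R)}$ meet only at $o$, and the hypothesis $R<\pi/(2\sqrt K)$ guarantees this on the sphere, since then $2R<\pi/\sqrt K$ and the centers are not antipodal. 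A compactness argument then shows that $L_\epsilon$ shrinks to $\{o\}$ as $\epsilon\downarrow0$.

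The key step is a quantitative description of each disk boundary near $o$. Because $\partial B_K(p_+,R)$ is smooth, with geodesic curvature $k=S_K'(R)/S_K(R)$ and tangent to $\nu$ at $o$, the distance function $r_+=d(p_+,\cdot)$ is smooth near $o$. It satisfies $r_+(o)=R$ and $\nabla r_+(o)=-\partial_x$, and its Hessian restricted to the tangential direction is $k>0$ in the $K\le0$ cases. In the $K>0$ case, positivity of $k$ is exactly $R<\pi/(2\sqrt K)$. Taylor expansion then gives
\[
 r_+(x,y)=R-x+\tfrac{k}{2}y^2+O(|x|^2+|x||y|+|y|^3),
\]
and by the reflection symmetry $x\mapsto -x$,
\[
 r_-(x,y)=R+x+\tfrac k2 y^2+O(\cdots).
\]
A point of $L_\epsilon$ satisfies $r_\pm<R+\epsilon$. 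Adding the two inequalities gives $k y^2+O(x^2+|x||y|+|y|^3)<2\epsilon$, and each inequality separately gives $|x|\le\epsilon+O(y^2+x^2+|xy|)$.

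To close the argument, we absorb the error terms, using that $|x|,|y|$ are small on $L_\epsilon$. First, $|x|\lesssim \epsilon+y^2$. Substituting this into the summed inequality yields $y^2\lesssim\epsilon$, so $|y|\lesssim\epsilon^{1/2}$ and hence $|x|\lesssim\epsilon$. Thus $L_\epsilon$ is contained in a coordinate box of dimensions $O(\epsilon)\times O(\epsilon^{1/2})$, whose area is $O(\epsilon^{3/2})$.

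The main obstacle is justifying the absorption rigorously. The smallness of $L_\epsilon$ must be established first, by the compactness argument, so that the quadratic error terms can be dominated by the leading terms $k y^2$ and $|x|$. One also has to check that the cross term $|x||y|$ is harmless: it is $\le \tfrac k4 y^2+Cx^2$, and $x^2$ is negligible because $|x|$ is small.
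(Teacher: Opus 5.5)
Your proof is correct and follows essentially the paper's route: localize $L_\epsilon$ near $o$, Taylor-expand the two distance functions $r_\pm$ using $\nabla r_\pm(o)=\mp\partial_x$ and the positive tangential Hessian $k=S_K'(R)/S_K(R)$, and confine the lens to a strip $|y|\lesssim\sqrt\epsilon$ of $x$-width $O(\epsilon)$. The differences are only technical. You use Fermi coordinates along $\gamma$ and absorb the error terms by direct inequalities. The paper instead uses normal coordinates at $o$ and the implicit function theorem to write the two facing boundary arcs as graphs, then integrates their overlap width.
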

\begin{proof}
Let $r_\pm(x)=d_K(p_\pm,x)$. The limiting radius-$R$ disks meet only at $o$, so outside a fixed small normal ball about $o$ their closures have positive separation; hence, for small $\epsilon$, all of $L_\epsilon$ lies in that normal ball. Choose geodesic normal coordinates $(x,y)$ at $o$, with the $x$-axis the center geodesic and the $y$-axis tangent to the two circles. The distance functions are smooth there. In a space form,
\[
 \nabla^2 r_\pm=\frac{S_K'(r_\pm)}{S_K(r_\pm)}
 \bigl(g-dr_\pm\otimes dr_\pm\bigr).
\]
At $o$ one has $\nabla r_+=-\partial_x$, $\nabla r_-=\partial_x$ and
$\nabla^2r_\pm(\partial_y,\partial_y)=k_R:=S_K'(R)/S_K(R)>0$. Taylor expansion therefore gives
\[
 r_+(x,y)=R-x+\frac{k_R}{2}y^2+O(|(x,y)|^3),\qquad
 r_-(x,y)=R+x+\frac{k_R}{2}y^2+O(|(x,y)|^3).
\]
Applying the smooth implicit-function theorem jointly in $(y,\epsilon)$ to $r_\pm=R+\epsilon$, and Taylor expanding the resulting graph functions at $(0,0)$, yields the two facing boundary graphs
\[
 x_+(y,\epsilon)=-\epsilon+\frac{k_R}{2}y^2
   +O(\epsilon^2+\epsilon y^2+|y|^3),
 \qquad
 x_-(y,\epsilon)=\epsilon-\frac{k_R}{2}y^2
   +O(\epsilon^2+\epsilon y^2+|y|^3),
\]
uniformly in a fixed small neighborhood. Their overlap width is therefore
\[
 w_\epsilon(y)=2\epsilon-k_Ry^2
   +O(\epsilon^2+\epsilon y^2+|y|^3).
\]
Shrink the coordinate neighborhood once. The cubic term can then be absorbed into $(k_R/4)y^2$, while the remaining terms are bounded by $C\epsilon^2+C\epsilon y^2$. Hence $w_\epsilon(y)>0$ implies $|y|\le C\sqrt\epsilon$, and on that strip $0<w_\epsilon(y)\le C\epsilon$. The metric area density is smooth and bounded above and below in the chart, so integration gives
$|L_\epsilon|\le C\epsilon\sqrt\epsilon$. This proves the claim.
\end{proof}

\begin{lemma}[overlapping disks of exact area and a contact cutoff]\label{lem:exactarea}
There is a unique common radius $R_\delta>R$ near $R$ such that
\begin{equation}
 U_\delta=B_K(p_\delta^+,R_\delta)\cup B_K(p_\delta^-,R_\delta),
 \qquad |U_\delta|=A.
 \label{eq:union}
\end{equation}
One has $R_\delta\to R$. These are connected Lipschitz disk-type domains of curvature exactly $K$. Their intersection $I_\delta$ and the portions of the two disk boundaries contained in the other disk shrink to $o$. There are $0\le\eta_\delta\le1$ in $H^1_{\rm loc}(\mathbb M_K)$, vanishing on a neighborhood of those sets, such that
\begin{equation}
 \int_{U_\delta}|\nabla\eta_\delta|^2\longrightarrow0,
 \qquad |\{\eta_\delta\ne1\}\cap U_\delta|\longrightarrow0,
 \qquad \eta_\delta\to1\text{ away from }o.
 \label{eq:cutoff}
\end{equation}
\end{lemma}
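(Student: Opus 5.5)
The plan has three parts: choose the radius by a continuity and monotonicity argument, read off the topology and curvature from two overlapping round disks, and build the cutoff with a two-dimensional logarithmic profile centred at the contact point $o$.

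\emph{Exact area.} Set $\Phi_\delta(r)=|B_K(p_\delta^+,r)\cup B_K(p_\delta^-,r)|=2V_K(r)-|B_K(p_\delta^+,r)\cap B_K(p_\delta^-,r)|$. This is continuous in $(\delta,r)$. At $r=R$ the intersection is a lens whose radii exceed half the center distance $d_\delta$ by $\delta/2$. By Lemma~\ref{lem:lens-area}, after a rigid motion recentering the contact, its area is $O(\delta^{3/2})$, so $\Phi_\delta(R)<A$. For $r=R+\epsilon$ with $\epsilon\gg\delta$ fixed small, the lens has radii exceeding $d_\delta/2$ by $O(\epsilon)$ and hence area $O(\epsilon^{3/2})$. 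Meanwhile $2V_K(R+\epsilon)-A\ge c\epsilon$ with $c=4\pi S_K(R)>0$, so $\Phi_\delta(R+\epsilon)>A$ for all small $\delta$.

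For uniqueness, I would show that $\Phi_\delta$ is strictly increasing on $[R,R+\epsilon]$. Its derivative is the length of the boundary of the union, namely $2\cdot 2\pi S_K(r)$ minus the two arcs lying inside the other disk. Those arcs have length $O(\sqrt{\epsilon+\delta})$, so the derivative stays positive. The intermediate value theorem then gives a unique $R_\delta\in(R,R+\epsilon)$, and letting $\epsilon\downarrow0$ yields $R_\delta\to R$.

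\emph{Geometry.} $U_\delta$ is the union of two overlapping round disks in $\mathbb M_K$. It has curvature $K$, and it is connected because $R_\delta>d_\delta/2$. Both disks are convex, since $R<\pi/(2\sqrt K)$ when $K>0$. The two boundary circles meet transversally at exactly two points, symmetric about $\gamma$, so the boundary is a Lipschitz Jordan curve with two convex corners. Hence $\overline{U_\delta}$ is a closed disk, for instance by Schoenflies or by a star-shapedness argument about $o$.

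The lens $I_\delta$ satisfies the hypotheses of Lemma~\ref{lem:lens-area} with parameter $O(\delta+(R_\delta-R))\to0$. Therefore it, together with the two boundary arcs inside the other disk, lies in a geodesic ball $B(o,\tau_\delta)$, where $\tau_\delta=C\sqrt{\delta+R_\delta-R}\to0$ by the graph description in that lemma's proof.

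\emph{Cutoff.} I would use the capacity of a point in dimension two. Take normal coordinates at $o$ with $\varrho=d_K(o,\cdot)$, fix $r_0$ small, and choose $\tau_\delta<\sigma_\delta\to0$ with $\sigma_\delta=\tau_\delta^{1/2}$, say. Define
\[
 \eta_\delta=\min\Bigl\{1,\max\Bigl\{0,\frac{\log(\varrho/\sigma_\delta)}{\log(r_0/\sigma_\delta)}\Bigr\}\Bigr\}.
\]
Then $\eta_\delta$ vanishes on $B(o,\sigma_\delta)\supset B(o,\tau_\delta)$, which is a neighbourhood of the sets above, and it is Lipschitz, hence in $H^1_{\rm loc}$. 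Its energy over all of $\mathbb M_K$ is at most $C/\log(r_0/\sigma_\delta)\to0$, using comparability of the metric with the flat one near $o$. The set $\{\eta_\delta\neq1\}$ is contained in $B(o,r_0)$, which is fixed. So I would replace $r_0$ by $r_\delta=\sigma_\delta^{1/2}\to0$. The energy bound becomes $C/\log(r_\delta/\sigma_\delta)=2C/|\log\sigma_\delta|\to0$, the exceptional set has area $O(r_\delta^2)\to0$, and $\eta_\delta=1$ outside $B(o,r_\delta)$, which gives convergence to $1$ away from $o$.

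\emph{Main obstacle.} The delicate point is the uniform control showing that the intersection and the interior arcs shrink to $o$ at a rate $\tau_\delta\to0$ while the radii also vary. I would handle it by applying the implicit-function expansion from the proof of Lemma~\ref{lem:lens-area}, jointly in $(y,\epsilon,\delta)$, to the recentered configuration.
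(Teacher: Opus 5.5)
Your proposal is correct and follows essentially the same route as the paper: the intermediate value theorem with strict monotonicity of the union area in the common radius, the lens-area lemma for the comparison at $R+\epsilon$, convexity with two transverse circle intersections for the topology, and the same logarithmic cutoff between $\sigma_\delta$ and $\sigma_\delta^{1/2}$. A few small corrections:
\begin{itemize}
\item $\Phi_\delta(R)<A$ follows from the lens having positive area, not from its $O(\delta^{3/2})$ bound.
\item The two corners of the union are reentrant, not convex.
\item The uniformity you flag as the main obstacle is immediate from the containment $B_K(p_\delta^\pm,r)\subset B_K(\gamma(\pm R),r+\delta/2)$. This puts $I_\delta$ inside a fixed-center lens, so the lens lemma as stated suffices. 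The paper instead argues at $\delta=0$, uses continuity in $\delta$ at fixed $\epsilon$, and shows the shrinking by compactness from the unique tangency at $o$.
\end{itemize}
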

\begin{proof}
At radius $R$ and $\delta>0$ the two disks overlap, so their union has area strictly below $A=2V_K(R)$. It remains to find a nearby larger radius for which the union area is above $A$. Set $\delta=0$, so the two radius-$R$ disks are tangent at $o$. By Lemma~\ref{lem:lens-area}, their overlap after enlarging both radii to $R+\epsilon$ has area $O(\epsilon^{3/2})$. Hence
\[
 \bigl|B_K(p_+,R+\epsilon)\cup B_K(p_-,R+\epsilon)\bigr|
 =2V_K(R+\epsilon)-O(\epsilon^{3/2})
 =A+2V_K'(R)\epsilon+O(\epsilon^{3/2})>A,
\]
where $V_K'(R)=2\pi S_K(R)>0$. By continuity of the union area in the centers and radius, the same strict upper comparison holds for all sufficiently small $\delta>0$. The union area is strictly increasing with the common radius: an increase adds an open outer piece away from the other disk. The intermediate value theorem therefore gives a unique $R_\delta\in(R,R+\epsilon)$ with union area $A$. Since $\epsilon$ can be chosen arbitrarily small, $R_\delta\to R$.

The disks and their intersection are geodesically convex (keep $R+\epsilon$ below a hemisphere if $K>0$). Since $d_\delta=2R-\delta<2R_\delta$ and $d_\delta>0$, the equal-radius boundary circles intersect. In the spherical case, $R<\pi/(2\sqrt K)$ and $R_\delta\to R$, so for small $\delta$ also $2R_\delta<\pi/\sqrt K$; this excludes the complementary/antipodal ambiguity. Hence, in each space form, the two boundary circles meet transversely at exactly two points. Their union is therefore a Jordan domain with two finite-angle corners. Any sequence in the intersection staying away from $o$ would have a limit in both limiting closed disks, contradicting unique tangency. Those interior boundary arcs shrink for the same reason.

Choose $\rho_\delta\downarrow0$ with all these sets in $B_K(o,\rho_\delta/2)$. In fixed normal coordinates take a radial cutoff zero for $r\le\rho_\delta$, one for $r\ge\sqrt{\rho_\delta}$, and linear in $\log r$ between. Its Euclidean energy is $4\pi/|\log\rho_\delta|\to0$. Smooth metric comparability proves \eqref{eq:cutoff}; the exceptional area also vanishes.
\end{proof}

Let $D_\pm$ be abstract copies of the radius-$R$ disk and set
\begin{equation}
 \Lambda_k^0=\lambda_k(D_+\sqcup D_-),\qquad
 \Lambda_0^0=\Lambda_1^0=0,\qquad
 \Lambda_2^0=\cdots=\Lambda_5^0=\beta.
 \label{eq:limitspectrum}
\end{equation}
The multiplicity four comes from two copies of the two-dimensional model eigenspace. Model isometries and radial scaling in exponential coordinates give smooth maps
$T_\delta^\pm:D_\pm\to B_K(p_\delta^\pm,R_\delta)$ whose pulled-back metrics converge smoothly, with uniform metric and Jacobian bounds. These maps compare domains; they do not rescale the metric on the domain.

The overlapping union, its geometric contact limit, and the spectral direct sum are distinguished in Figure~\ref{fig:sharpness}.

\begin{figure}[htbp]
\centering
\begin{tikzpicture}[x=1cm,y=1cm,font=\small,>=Stealth]
  \node[align=center] at (2.35,1.82) {\textbf{(a)} Exact-area union $U_\delta$};
  \node[align=center] at (8.10,1.82) {\textbf{(b)} Geometric contact};
  \node[align=center] at (13.68,1.82) {\textbf{(c)} Spectral direct sum};
  \def\rd{1.01089881059323}
  \pgfmathsetmacro{\ang}{acos(0.9/\rd)}
  \pgfmathsetmacro{\yi}{sqrt(\rd^2-0.9^2)}
  \begin{scope}
    \clip (1.45,0) circle (\rd);
    \fill[gray!18] (3.25,0) circle (\rd);
  \end{scope}
  \draw[thin,densely dotted] (1.45,0) circle (\rd);
  \draw[thin,densely dotted] (3.25,0) circle (\rd);
  \draw[semithick] ($(1.45,0)+(\ang:\rd)$) arc[start angle=\ang,end angle=360-\ang,radius=\rd];
  \draw[semithick] ($(3.25,0)+(180+\ang:\rd)$) arc[start angle=180+\ang,end angle=540-\ang,radius=\rd];
  \fill (2.35,\yi) circle (1.4pt);
  \fill (2.35,-\yi) circle (1.4pt);
  \fill (1.45,0) circle (1.3pt);
  \fill (3.25,0) circle (1.3pt);
  \node[fill=white,inner sep=1pt] at (2.35,0) {$I_\delta$};
  \draw[->] (1.45,0)--($(1.45,0)+(135:\rd)$);
  \node[fill=white,inner sep=1.5pt] at (1.53,0.56) {$R_\delta$};
  \draw[<->] (1.45,-1.44)--node[below] {$2R-\delta$}(3.25,-1.44);
  \node at (2.35,1.35) {$|U_\delta|=A$};
  \draw[semithick] (7.10,0) circle (1.0);
  \draw[semithick] (9.10,0) circle (1.0);
  \draw[dashed] (8.10,0) circle (0.32);
  \fill (8.10,0) circle (1.4pt);
  \node[below=9pt,fill=white,inner sep=1pt] at (8.10,0) {$o$};
  \node at (7.10,0) {$R$};
  \node at (9.10,0) {$R$};
  \node[align=center,font=\footnotesize] at (8.10,-1.43) {shrinking cutoff neighborhood};
  \draw[->] (4.67,0.45)--node[above] {$\delta\downarrow0$}(5.76,0.45);
  \draw[semithick] (12.55,0) circle (0.87);
  \draw[semithick] (14.82,0) circle (0.87);
  \node[align=center] at (12.55,0) {$D_+$\\$A/2$};
  \node[align=center] at (14.82,0) {$D_-$\\$A/2$};
  \node at (13.68,-1.35) {$D_+\sqcup D_-$};
  \draw[->] (2.35,-2.16)--node[fill=white,inner sep=3pt] {fixed-index Neumann spectral limit}(13.68,-2.16);
\end{tikzpicture}
\caption{The sharpness construction in a fixed constant-curvature model (schematic for general $K$, not drawn to a common scale across panels). The union $U_\delta$ has finite-angle corners. Each constituent disk has area $(A+|I_\delta|)/2>A/2$; dotted arcs inside the overlap are not part of $\partial U_\delta$. The dashed circle in (b) indicates a cutoff neighborhood shrinking to $o$, not a hole removed from the domain. The limiting closed disks touch geometrically, whereas the fixed-index spectral limit is the disjoint union of two abstract copies of area $A/2$. Corner smoothing and a local area correction yield $U_n$, which need not remain a union of two exact disks.}
\label{fig:sharpness}
\end{figure}

\begin{theorem}[fixed-index spectral convergence]\label{thm:spectral}
For every fixed $k\ge0$,
\begin{equation}
 \lambda_k(U_\delta)\longrightarrow\Lambda_k^0.
 \label{eq:spectral}
\end{equation}
\end{theorem}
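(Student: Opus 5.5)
We prove the two one-sided bounds $\limsup_{\delta\downarrow0}\lambda_k(U_\delta)\le\Lambda_k^0$ and $\liminf_{\delta\downarrow0}\lambda_k(U_\delta)\ge\Lambda_k^0$ separately, both via min--max on the forms of Lemma~\ref{lem:LL-form} (setting (G) in $\mathbb M_K$). Throughout, write $D^\pm_\delta=B_K(p_\delta^\pm,R_\delta)$ and let $T_\delta^\pm$ be the comparison maps introduced above.

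\emph{Upper bound.} Fix $k$ and take a mass-orthonormal family $\varphi_0,\dots,\varphi_k$ of eigenfunctions on $D_+\sqcup D_-$ for $\Lambda_0^0,\dots,\Lambda_k^0$. Transplant each $\varphi_i$ to $U_\delta$ by setting $\varphi_i\circ(T_\delta^\pm)^{-1}$ on $D^\pm_\delta\setminus I_\delta$, and multiply by the cutoff $\eta_\delta$ of Lemma~\ref{lem:exactarea}. Since $\eta_\delta$ vanishes near $I_\delta$ and near the boundary arcs lying inside the other disk, the two pieces have disjoint supports. Each piece is therefore in $H^1(U_\delta)$, and the sum is a single $H^1(U_\delta)$ function.

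The transplanted functions form an approximately orthonormal family with energies close to the originals. This uses three facts: eigenfunctions on model disks are smooth and bounded up to the boundary; the pulled-back metrics converge smoothly; and \eqref{eq:cutoff} gives $\int|\nabla\eta_\delta|^2\to0$ and $|\{\eta_\delta\ne1\}|\to0$. The cross term $\int\varphi^2|\nabla\eta_\delta|^2\le\|\varphi\|_\infty^2\int|\nabla\eta_\delta|^2\to0$ and the other errors are handled the same way. The Gram and energy matrices then converge to $I$ and $\operatorname{diag}(\Lambda_i^0)$, and min--max gives the upper bound.

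\emph{Lower bound.} Take mass-normalized eigenfunctions $u^\delta_0,\dots,u^\delta_k$ on $U_\delta$, mutually orthogonal. The upper bound gives uniformly bounded energies. Restrict each to $D^\pm_\delta\setminus I_\delta$ and pull back by $T^\pm_\delta$, obtaining functions on $D_\pm$ minus sets $(T^\pm_\delta)^{-1}(I_\delta)$ shrinking to the boundary point. The main obstacle is to extend these to $H^1(D_\pm)$ with uniform bounds. Plan: $(T^\pm_\delta)^{-1}(I_\delta)$ is a thin lens at the tangency point, bounded by two arcs meeting at angles tending to zero. I would use an explicit extension by reflection across the arc of $\partial D_\pm$ in normal coordinates, a uniformly bi-Lipschitz map, so the extension is bounded in $H^1$ independently of $\delta$.

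Alternatively, and more simply, work with $v^\pm=u\circ T^\pm_\delta$ on all of $D_\pm$. The full disk $D^\pm_\delta\subset U_\delta$, since each disk lies in the union, so $v^\pm$ is defined on all of $D_\pm$ with energy bounded by $E(u)$. The only loss is that $I_\delta$ is counted twice in the mass. That mass tends to zero by uniform integrability: $H^1$ bounds in two dimensions give $L^4$ bounds on the fixed disks, and $|I_\delta|\to0$. I expect to use this second route. It gives maps $u\mapsto(v^+,v^-)\in H^1(D_+\sqcup D_-)$ that nearly preserve mass inner products and do not increase energy beyond $(1+o(1))$ times the original, because the two overlapping regions contribute energy twice only on $I_\delta$. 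To control that doubled energy on $I_\delta$, pass to subsequences: Rellich on $D_\pm$ gives strong $L^2$ convergence of the $v$'s. Weak lower semicontinuity of energy applied to $(1-\eta_\delta)$-localized parts then shows that the energy on the vanishing region is asymptotically nonnegative and negligible in the lower direction.

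Min--max on the limit span then gives $\liminf\lambda_k(U_\delta)\ge\Lambda_k^0$. The span is $(k+1)$-dimensional because the limits remain orthonormal. Combining the two bounds yields \eqref{eq:spectral}.
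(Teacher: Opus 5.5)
Your proposal is correct and follows essentially the paper's proof. For the upper limit you transplant with the cutoff $\eta_\delta$; for the lower limit you restrict to the full moving disks, use the $L^4$ no-concentration bound on $I_\delta$, and apply weak lower semicontinuity of the energy away from the contact point.

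One step needs repair. Your claim that the doubled energy is at most $(1+o(1))E(u)$ is unjustified, since nothing forces $\int_{I_\delta}|\nabla u|^2\to0$. Instead, apply lower semicontinuity to each combination $\sum_\ell b_\ell u_\ell$ on regions whose images in the two moving disks are disjoint, and drop the nonnegative remainder. Such regions are either fixed truncations of $D_\pm$ away from the contact, shrunk afterwards by monotone convergence as in the paper, or the weights $\eta_\delta^2$ (not $1-\eta_\delta$).
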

\begin{proof}
\emph{Upper limit.} Transplant the first $k+1$ eigenfunctions of $D_+\sqcup D_-$ by $(T_\delta^\pm)^{-1}$, multiply by $\eta_\delta$, and extend by zero outside each moving disk. They lie in $H^1(U_\delta)$ because the cutoff vanishes near the interior portions of the moving-disk boundaries across which the functions are extended by zero. After the cutoff, trial functions transplanted from opposite components have disjoint supports in the overlap region. Smooth convergence of the metrics and \eqref{eq:cutoff} imply convergence of all finite-dimensional mass and energy matrices: the original eigenfunctions and gradients are bounded, the cutoff energy vanishes, and mixed terms vanish by Cauchy--Schwarz. The mass matrix tends to $I_{k+1}$, so its least eigenvalue is at least $1/2$ for sufficiently small $\delta$; the transplanted space retains dimension $k+1$. Matrix convergence controls the quotient uniformly on its coefficient unit sphere. Therefore
\begin{equation}
 \limsup_{\delta\downarrow0}\lambda_k(U_\delta)\le\Lambda_k^0.
 \label{eq:limsup}
\end{equation}

\emph{Compactness and mass for the lower limit.} Choose $\delta_n\downarrow0$ such that $\lambda_k(U_{\delta_n})\to\liminf_{\delta\downarrow0}\lambda_k(U_\delta)$, and choose $L^2$-orthonormal eigenfunctions $u_{0,n},\ldots,u_{k,n}$ on $U_{\delta_n}$ corresponding to the first $k+1$ eigenvalues. Restrict to each whole moving disk and pull back to $D_\pm$. The upper bound supplies uniform $H^1$ bounds. A common subsequence for these finitely many functions converges weakly in $H^1$ and strongly in $L^2$ on each fixed disk. No mass remains in the shrinking overlap: the uniform Sobolev inequality on either smoothly varying disk gives
\begin{equation}
 \int_{I_{\delta_n}}|u|^2
 \le |I_{\delta_n}|^{1/2}\|u\|_{L^4}^2
 \le C|I_{\delta_n}|^{1/2}\|u\|_{H^1}^2\longrightarrow0.
 \label{eq:noconcentration}
\end{equation}
Products are controlled by Cauchy--Schwarz. Since a union integral is the sum of disk integrals minus the intersection integral, the limiting pairs $u_\ell=(u_\ell^+,u_\ell^-)$ are orthonormal in $L^2(D_+\sqcup D_-)$.

\emph{Energy without double counting.} First remove a fixed small contact neighborhood from each limiting disk. The images of the two remaining regions are disjoint for large $n$. Pull each such truncated image back by $T_{\delta_n}^\pm$ to the corresponding fixed truncated subset of $D_\pm$; the pulled-back metrics converge smoothly there. For each fixed coefficient vector $b$, weak lower semicontinuity on these fixed truncated domains gives the corresponding inequality for the pair. Letting the contact neighborhoods shrink and using monotone convergence then gives
\begin{equation}
 \sum_\pm\int_{D_\pm}\left|\nabla\sum_{\ell=0}^k b_\ell u_\ell^\pm\right|^2
 \le\liminf_n\int_{U_{\delta_n}}\left|\nabla\sum_{\ell=0}^k b_\ell u_{\ell,n}\right|^2
 \le(\liminf_n\lambda_k(U_{\delta_n}))|b|^2.
 \label{eq:energy-liminf}
\end{equation}
Min--max on the $(k+1)$-dimensional limiting space proves the opposite inequality to \eqref{eq:limsup}. This proves \eqref{eq:spectral}.
\end{proof}
The two full-disk restrictions account for all mass. This fixed-index limit is specific to their covering construction: arbitrary thin connectors can carry additional modes, and no uniformity in the index is claimed.

\begin{lemma}[fixed-domain smoothing and spectral stability]\label{lem:mapstability}
Let $W\Subset S$ be Lipschitz. Suppose $W_j=\Psi_j(W)$, where the maps and inverses are uniformly bi-Lipschitz on a common neighborhood, all images remain in a relatively compact smooth metric neighborhood, $\Psi_j\to\mathrm{id}$ uniformly and $D\Psi_j\to I$ almost everywhere in fixed charts. Then $|W_j|\to|W|$ and $\lambda_k(W_j)\to\lambda_k(W)$ for each fixed $k$.
For every fixed $W=U_\delta$, such smoothing maps, followed by a local area correction, give smooth disk-type domains with area exactly $A$, curvature exactly $K$, and the same fixed-index spectral limits.
\end{lemma}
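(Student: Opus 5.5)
The plan has two parts: first the abstract stability under bi-Lipschitz maps, then the construction of smooth exact-area domains from $U_\delta$.

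\emph{Stability.} Pull the Neumann problem on $W_j$ back to the fixed domain $W$ by $\Psi_j$. The form domains coincide: $H^1(W_j)\circ\Psi_j=H^1(W)$, with norm equivalence uniform in $j$ because of the uniform bi-Lipschitz bounds. The pulled-back mass and energy densities are
\[
 \sqrt{\det g_j}\,|\det D\Psi_j|
 \quad\text{and}\quad
 (D\Psi_j)^{-1}g^{-1}(D\Psi_j)^{-T}\sqrt{\det g_j}\,|\det D\Psi_j|,
\]
where $g_j=g\circ\Psi_j$. These coefficients are uniformly bounded above and below, and they converge almost everywhere to those of $W$, since $\Psi_j\to\mathrm{id}$ uniformly and $D\Psi_j\to I$ a.e.

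Dominated convergence then gives $|W_j|\to|W|$. For eigenvalues I would run the same two-sided argument as in Theorem~\ref{thm:spectral}. For the upper limit, fix the first $k+1$ eigenfunctions of $W$ as trial functions. The mass and energy matrices converge by dominated convergence, since the gradients lie in $L^2$ and the coefficients are bounded and converge a.e. For the lower limit, take orthonormal eigenfunctions on $W_j$ and pull them back. Uniform $H^1(W)$ bounds and Rellich compactness on the fixed Lipschitz $W$ give strong $L^2$ convergence, so the limits stay orthonormal, because the mass coefficients converge a.e. and are bounded. The main subtle point is energy lower semicontinuity with varying coefficients. I would handle it by writing the energy as $\int |A_j^{1/2}\nabla u_j|^2$ and noting that $A_j^{1/2}\nabla u_j\rightharpoonup A^{1/2}\nabla u$ weakly in $L^2$: the coefficients converge strongly in $L^p$ for every $p<\infty$ and boundedly a.e., so the product of a strongly convergent bounded factor with a weakly convergent factor converges weakly. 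Min--max then closes the argument.

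\emph{Smoothing and area correction.} $U_\delta$ is a curvilinear polygon with two transverse corners. Near each corner I would build $\Psi_j$ in a local bi-Lipschitz chart that straightens the two arcs into a wedge. The map rounds the corner inside a ball of radius $1/j$, equals the identity outside a ball of radius $2/j$, and has uniformly bounded derivative and inverse derivative; a radial reparametrization of the wedge boundary into a smooth curve works. Then $D\Psi_j\to I$ a.e., since it differs from $I$ only on sets of vanishing measure, and the images are smooth disk-type domains in $\mathbb M_K$, so the curvature is exactly $K$.

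For the area correction, compose with a one-parameter family $\Theta_{j,\tau}$ that pushes a fixed smooth boundary arc, away from the corners, outward by a normal bump of height $\tau$. Area is continuous and strictly increasing in $\tau$, and $|\Psi_j(W)|\to A$, so the intermediate value theorem gives $\tau_j\to0$ with exact area $A$. The composed maps still satisfy the hypotheses of the first part, so their fixed-index spectra converge to $\lambda_k(U_\delta)$.

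A diagonal choice, taking $\delta_n\downarrow0$ and then $j_n$ large, combined with Theorem~\ref{thm:spectral}, transfers the limits $\Lambda_k^0$ to the smooth domains, one $k$ at a time with a countable diagonal. The main obstacle I expect is the corner rounding with uniform bi-Lipschitz control; the wedge-chart construction above is the first approach I would try.
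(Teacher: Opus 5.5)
Your proposal is correct and follows essentially the paper's route: pull back to $W$ with uniformly bounded, a.e.\ convergent mass and energy coefficients; get the upper limit from fixed trial spaces by dominated convergence; get the lower limit from Rellich compactness plus the weak convergence $A_j^{1/2}\nabla u_j\rightharpoonup\mathsf A^{1/2}\nabla u$; then smooth the corners locally and fix the area by deforming a fixed smooth arc away from the corners. The paper realizes the corner map as a vertical shear $(x,y)\mapsto(x,y+\zeta(y-\varphi(x))(\varphi_j-\varphi)(x))$ of a mollified Lipschitz graph rather than your rescaled rounding in shrinking balls, but both give uniformly bi-Lipschitz maps with $D\Psi_j\to I$ a.e.

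Two small points need to be stated. First, the straightening chart must be a smooth diffeomorphism, not merely bi-Lipschitz, so that the rounded image is smooth; this is available by transversality, e.g.\ using the two distance functions as coordinates. Second, the area-correction parameter $\tau$ must be allowed either sign, since rounding a reentrant corner of $U_\delta$ can push the area above $A$.
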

\begin{proof}
Pull the forms back to $W$. In local coordinates put $J_j=D\Psi_j$ and $g_j=g\circ\Psi_j$. The mass density and energy tensor are
\[
 b_j=\sqrt{\det g_j}\,|\det J_j|,\qquad
 A_j=b_jJ_j^{-1}g_j^{-1}J_j^{-T}.
\]
The stated uniform bi-Lipschitz and compact-neighborhood hypotheses bound $b_j$ above and below and make $A_j$ uniformly elliptic; both converge almost everywhere to the original coefficients. Finite trial spaces give the upper spectral limit by dominated convergence. For the lower limit, choose $L^2$-normalized eigenfunctions among the first finitely many eigenvalues. The upper spectral bound gives uniformly bounded pulled-back $H^1$ norms, hence weak $H^1$ and strong $L^2$ limits. Let $b$ and $\mathsf A$ denote the corresponding limiting mass density and energy tensor in a fixed chart. For a fixed nonnegative partition-of-unity function $\chi$ supported in the chart, strong $L^2$ convergence of $v_j,z_j$ and the boundedness of $b_j$ give
\[
\begin{aligned}
 \left|\int\chi b_jv_jz_j\,dx-\int\chi bvz\,dx\right|
 &\le C\bigl(\|v_j-v\|_2\|z_j\|_2+\|v\|_2\|z_j-z\|_2\bigr)\\
 &\quad+\int\chi|b_j-b|\,|vz|\,dx\longrightarrow0.
\end{aligned}
\]
The two terms vanish by strong $L^2$ convergence and dominated convergence for the fixed function $\chi|vz|$, respectively. Summing over a finite atlas preserves mass orthonormality, without pointwise bounds on the varying eigenfunctions.

For the energy lower limit, fix an $L^2$ vector field $Z$ in a chart. Uniform ellipticity and almost-everywhere convergence imply $A_j^{1/2}Z\to\mathsf A^{1/2}Z$ strongly in $L^2$ by dominated convergence; the square-root map is continuous on the common compact ellipticity range. Pairing with $\nabla v_j\rightharpoonup\nabla v$ gives $A_j^{1/2}\nabla v_j\rightharpoonup\mathsf A^{1/2}\nabla v$ in $L^2$. Localize by the square roots of the same partition-of-unity functions and apply weak lower semicontinuity, then sum the finitely many contributions. The same min--max argument as above gives the spectral lower limit. Area convergence follows from the bounded densities $b_j\to b$ almost everywhere.

For a fixed $U_\delta$, represent each corner locally by a Lipschitz graph $y=\varphi(x)$, smooth away from the corner. Mollify and blend it to smooth graphs $\varphi_j$ with bounded slopes, uniform convergence, and $\varphi_j'\to\varphi'$ almost everywhere. Put $d_j=\varphi_j-\varphi$. With a smooth vertical cutoff $\zeta$ equal to one at zero, the local map
\[
 (x,y)\mapsto(x,y+\zeta(y-\varphi(x))d_j(x))
\]
sends the old graph to the new one. For small $\|d_j\|_\infty$ its vertical derivative stays positive; it and its inverse are uniformly Lipschitz for this fixed corner, and the derivative tends almost everywhere to $I$. Disjoint corner charts give smooth domains $\widetilde W_j$ satisfying the first assertion.

Choose a smooth boundary arc away from these charts. Let $V$ be a small neighborhood of that arc on which every $\widetilde W_j$ coincides with $W$, and choose a smooth vector field $X$ supported in $V$ whose boundary flux across the arc is positive. Let $\Theta_t$ be its local flow; in particular $\Theta_t=\mathrm{id}$ outside $V$. Since $\widetilde W_j\cap V=W\cap V$ for every $j$, while the flow is the identity off $V$, the area increment is the same as for the reference domain:
\[
 |\Theta_t(\widetilde W_j)|-|\widetilde W_j|
 =|\Theta_t(W)|-|W|=:q(t).
\]
Here $q(0)=0$ and $q'(0)>0$. Set $t_j=q^{-1}(A-|\widetilde W_j|)\to0$. The corrected domains are smooth, have exact area $A$, and their composite maps retain the spectral hypotheses. All changes move domains inside the fixed $\mathbb M_K$, so curvature remains exactly $K$.
\end{proof}

\begin{proof}[Proof of \cref{thm:sharp}]
Choose any sufficiently small $\delta_n\downarrow0$. For each fixed $n$, choose a smoothing and area correction of $U_{\delta_n}$ fine enough that
\[
 |U_n|=A,\qquad
 |\lambda_k(U_n)-\lambda_k(U_{\delta_n})|<1/n\quad(0\le k\le n).
\]
Theorem~\ref{thm:spectral} gives the required limits for every fixed $k$. The smoothing is chosen separately for each $n$. In particular $\lambda_1\to0$ and $\lambda_2,\lambda_3\to\beta>0$, so the scalar supremum and reciprocal infimum in \eqref{eq:sharp-both} follow from Theorem~\ref{thm:main}. Their strict inequalities exclude attainment in either connected class.

The sequence has a representation in setting (L) as well. For $K=0$ use the planar chart, and for $K<0$ use the Poincar\'e disk chart on a relatively compact neighborhood of each $\overline U_n$. For $K>0$, the limiting union has area $A<4\pi/K$, so its complement contains an open set. Choose a stereographic projection point there; the construction and its local smoothing can be taken to avoid a fixed neighborhood of this point for all sufficiently large $n$. In each case the chart image is a bounded smooth simply connected planar domain and the model metric has a smooth positive conformal weight on a neighborhood of its closure. It therefore satisfies \eqref{eq:LL-weight}. These chart maps preserve area and spectrum. The construction gives an extremizing sequence in each class; it does not classify all such sequences.
\end{proof}

\section{Quantitative model and domain deficits}\label{sec:quadratic}
We estimate the spectral, area-split, and kernel deficits for an interior two-pole selection, then prove coercivity for time profiles. These analytic estimates concern the selected configuration, not global geometric stability.

\subsection{Exact spectral remainder and high-mode energy}
\begin{lemma}[Exact shifted Ritz remainder]\label{lem:ritz-remainder}
Fix a real $L^2(\Omega)$-normalized eigenfunction $\phi_1$ associated with the first positive Neumann eigenvalue. Suppose $F=X+iY\in H^1(\Omega;\mathbb C)$ satisfies
$X,Y\perp\Span\{1,\phi_1\}$ and $Q_F=(E/2)I_2$, $E=E(F)>0$.
Choose a real $L^2(\Omega)$-orthonormal Neumann eigenbasis in spectral order, with $\phi_0=A^{-1/2}$ and its member of index $1$ equal to the fixed $\phi_1$. Put
$h_1=\sqrt{2/E}X$, $h_2=\sqrt{2/E}Y$, and
$p_k=\lambda_k\sum_{i=1}^2|\langle h_i,\phi_k\rangle_{L^2(\Omega)}|^2$ for $k\ge2$.
Then $0\le p_k\le1$, $\sum p_k=2$, and
\begin{align}
 \mathscr R(F)&:=\lambda_2^{-1}+\lambda_3^{-1}-2M(F)/E(F)\notag\\
 &=(1-p_2)(\lambda_2^{-1}-\lambda_3^{-1})
 +\sum_{k\ge4}p_k(\lambda_3^{-1}-\lambda_k^{-1})\ge0.
 \label{eq:exact-ritz-remainder}
\end{align}
For an integer $r\ge3$ with $\lambda_{r+1}>\lambda_3$,
\begin{equation}
 \sum_{k\ge r+1}p_k\le
 \frac{\mathscr R(F)}{\lambda_3^{-1}-\lambda_{r+1}^{-1}}.
 \label{eq:spectral-leakage}
\end{equation}
If $E/M<\beta$, then $0\le\mathscr R(F)<\Delta_{\rm sp}$, where
\begin{equation}
 \Delta_{\rm sp}=\lambda_2^{-1}+\lambda_3^{-1}-2/\beta.
 \label{eq:remainder-deficit}
\end{equation}
\end{lemma}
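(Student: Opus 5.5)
The plan is to redo the bookkeeping of Lemma~\ref{lem:shifted-ritz} while keeping every term instead of discarding slack. The real components $h_1,h_2$ lie in $\mathcal V=H^1(\Omega)\cap\{1,\phi_1\}^{\perp}$. The hypothesis $Q_F=(E/2)I_2$ makes them orthonormal for the Dirichlet inner product on $\mathcal V$. With the chosen eigenbasis, $\{\phi_k/\sqrt{\lambda_k}\}_{k\ge2}$ is an energy-orthonormal basis of $\mathcal V$, and $E(h_i,\phi_k/\sqrt{\lambda_k})=\sqrt{\lambda_k}\langle h_i,\phi_k\rangle$.

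\emph{Coefficient identities.} Parseval gives $\sum_k\lambda_k|\langle h_i,\phi_k\rangle|^2=E(h_i)=1$ for each $i$, hence $\sum_{k\ge2}p_k=2$. Bessel's inequality for the energy-orthonormal pair $h_1,h_2$, applied to the unit vector $\phi_k/\sqrt{\lambda_k}$, gives $p_k\le1$; nonnegativity is obvious. The mass identity comes from the $L^2$ expansion $M(h_i)=\sum_{k\ge2}|\langle h_i,\phi_k\rangle|^2$, valid because $h_i\perp\phi_0,\phi_1$. Summing over $i$ yields $2M(F)/E(F)=M(h_1)+M(h_2)=\sum_{k\ge2}p_k/\lambda_k$.

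\emph{Exact remainder.} Write $\lambda_2^{-1}+\lambda_3^{-1}=p_2\lambda_2^{-1}+(1-p_2)\lambda_2^{-1}+\lambda_3^{-1}$. Substitute $1=(2-p_2)-1=p_3+\sum_{k\ge4}p_k-1$, using $\sum p_k=2$; equivalently, $(2-p_2)\lambda_3^{-1}=p_3\lambda_3^{-1}+\sum_{k\ge4}p_k\lambda_3^{-1}$. Subtracting $\sum_kp_k/\lambda_k$ then gives \eqref{eq:exact-ritz-remainder}:
\[
 \lambda_2^{-1}+\lambda_3^{-1}-\sum_kp_k\lambda_k^{-1}=(1-p_2)(\lambda_2^{-1}-\lambda_3^{-1})+\sum_{k\ge4}p_k(\lambda_3^{-1}-\lambda_k^{-1}).
\]
Both groups are nonnegative because $p_2\le1$ and $\lambda_k\ge\lambda_3$. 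This rearrangement of a convergent nonnegative series is the step needing a little care, and it is justified since $\sum p_k/\lambda_k\le 2/\lambda_2$ converges absolutely.

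\emph{Leakage bound and comparison.} For \eqref{eq:spectral-leakage}, drop the first term and the terms with $4\le k\le r$, all nonnegative. For $k\ge r+1$ use $\lambda_3^{-1}-\lambda_k^{-1}\ge\lambda_3^{-1}-\lambda_{r+1}^{-1}>0$, then divide. Finally, $E/M<\beta$ means $2M/E>2/\beta$, so
\[
 \mathscr R(F)=\lambda_2^{-1}+\lambda_3^{-1}-2M/E<\lambda_2^{-1}+\lambda_3^{-1}-2/\beta=\Delta_{\rm sp}.
\]
No step is a serious obstacle; the only points to check are that the basis genuinely contains the fixed $\phi_1$ even when $\lambda_1$ is multiple, and that Parseval is applied in the complete space $\mathcal V$, exactly as in Lemma~\ref{lem:shifted-ritz}.
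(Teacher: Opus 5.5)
Your proposal is correct and follows the paper's proof essentially step for step. Like the paper, you identify the $p_k$ with the energy-expansion coefficients of Lemma~\ref{lem:shifted-ritz}, subtract $2M(F)/E(F)=\sum_{k\ge2}p_k/\lambda_k$ from $\lambda_2^{-1}+\lambda_3^{-1}$ using $\sum p_k=2$, and then read off the leakage and $\Delta_{\rm sp}$ bounds. There is one cosmetic slip: the substitution should read $1=(2-p_2)-(1-p_2)$, not $1=(2-p_2)-1$. Your ``equivalently'' formula and the displayed identity are nonetheless correct.
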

\begin{proof}
The coefficients $p_k$ are the coefficients $a_k$ in the proof of Lemma~\ref{lem:shifted-ritz}, for the same energy-normalized trial plane. The energy-orthonormal expansion established there gives
\[
 0\le p_k\le1,\qquad \sum_{k\ge2}p_k=2,
 \qquad \frac{2M(F)}{E(F)}=\sum_{k\ge2}\frac{p_k}{\lambda_k}.
\]
Subtracting the last identity from $\lambda_2^{-1}+\lambda_3^{-1}$ yields \eqref{eq:exact-ritz-remainder}. Its terms are nonnegative, and the series converge absolutely because $\sum p_k=2$. For $k\ge r+1$, the coefficient in the tail is at least $\lambda_3^{-1}-\lambda_{r+1}^{-1}>0$, which proves \eqref{eq:spectral-leakage}. Finally,
$\Delta_{\rm sp}-\mathscr R(F)=2M(F)/E(F)-2/\beta>0$.
\end{proof}
The $p_k$ are spectral energy weights, not $L^2$ mass fractions. The equality condition is independent of the basis in a repeated eigenspace. In the restricted space $\mathcal V$, put
\[
 E_{<}=\Span\{\phi_k:k\ge2,\ \lambda_k<\lambda_3\},\qquad
 E_{\le}=\Span\{\phi_k:k\ge2,\ \lambda_k\le\lambda_3\}.
\]
Then $\mathscr R(F)=0$ exactly when $E_{<}\subset\Span\{X,Y\}\subset E_{\le}$. Indeed every weight above $\lambda_3$ must vanish, and if $\lambda_2<\lambda_3$, the additional condition $p_2=1$ says $\phi_2$ lies in the trial plane. The converse follows from the same identity. A repeated $\lambda_3$ does not require the plane to contain its whole eigenspace. The tail estimate keeps its positive spectral-gap denominator; a small remainder alone does not give a domain-uniform small tail.

\subsection{Model area-split deficits}

\begin{lemma}[analytic dependence of the regular model solution]\label{lem:analytic-regular-family}
Fix $K\in\R$ and fix the positive leading coefficient of $v_\lambda(a)/\sqrt a$ independently of $\lambda$. For every $\lambda_0>0$ and $0<s_0<a_D(\lambda_0)$, the functions $v_\lambda(s)$ and $R_\lambda(s)$ are real-analytic in $(\lambda,s)$ near $(\lambda_0,s_0)$. In particular, this applies near $(\beta,m)$.
\end{lemma}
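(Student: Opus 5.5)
The plan is to reduce the area-coordinate equation \eqref{eq:modelODE} to a Frobenius problem at the regular singular point $a=0$. After that, the claim follows from analytic dependence of ODE solutions on parameters, applied on a compact interval away from $a=0$.

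\emph{Step 1: indicial structure at the center.} Write $G_K(a)=a(2c-Ka)$. Multiplying \eqref{eq:modelODE} by $a/(2c-Ka)$ puts it in the form $a^2v''+a\,p(a)v'+q(a,\lambda)v=0$. The coefficients $p(a)=a G_K'(a)/G_K(a)=(2c-2Ka)/(2c-Ka)$ and $q=-c^2/(2c-Ka)^2+\lambda a/(2c-Ka)$ are analytic in $a$ on $|a|<2c/|K|$, or entire when $K=0$, and entire (indeed affine) in $\lambda$. At $a=0$ we have $p(0)=1$ and $q(0,\lambda)=-1/4$, so the indicial equation is $r^2-1/4=0$, with exponents $\pm1/2$ independent of $\lambda$.

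\emph{Step 2: the regular Frobenius branch.} Set $v_\lambda(a)=\sqrt a\,\phi(a,\lambda)$ with $\phi(0,\lambda)$ equal to the fixed leading coefficient. The two exponents differ by the integer $1$, but the regular branch is the larger root $+1/2$. For this root the recursion $[(n+1/2)^2-1/4]\phi_n=(\text{earlier coefficients})$ never encounters a vanishing factor, because $n(n+1)\ne0$ for $n\ge1$. The usual majorant argument then shows that $\phi$ is a convergent power series in $a$. Its coefficients are polynomials in $\lambda$, with bounds locally uniform in $\lambda$, so $\phi$ is jointly analytic in $(a,\lambda)$ on a small polydisk $|a|<\varepsilon$. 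This branch coincides with the regular solution used throughout: the other solution behaves like $a^{-1/2}$, or carries a logarithm, and is excluded by $v\sim\sqrt a$.

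\emph{Step 3: continuation to $s_0$.} Fix $a_1\in(0,\varepsilon)$, for instance $a_1<s_0$. The data $(v_\lambda(a_1),v_\lambda'(a_1))$ are analytic in $\lambda$ by Step 2. On $[a_1,s_0+\delta]$ the equation $-(G_Kv')'+c^2v/G_K=\lambda v$ is a regular linear system, with coefficients analytic in $a$ since $G_K>0$ there and $s_0<a_D(\lambda_0)\le4\pi/K$ when $K>0$, and entire in $\lambda$. Analytic dependence on parameters and initial data (Cauchy–Kovalevskaya, or Picard iteration in a complexified neighborhood) gives joint real-analyticity of $v_\lambda(s)$ and $v_\lambda'(s)$ near $(\lambda_0,s_0)$.

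\emph{Step 4: the response function.} Because $s_0$ lies in $I_{\lambda_0}$, we have $v_{\lambda_0}(s_0)>0$. Continuity then keeps $v_\lambda(s)\ne0$ on a neighborhood, so $R_\lambda=G_Kv_\lambda'/v_\lambda$ is analytic there as a quotient of analytic functions.

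The only delicate point is Step 2: keeping analyticity jointly in $\lambda$ uniform across the singular endpoint, and making sure the normalization choice does not introduce $\lambda$-dependence that breaks analyticity. Fixing the leading coefficient as a constant settles the latter. For the case $(\beta,m)$, Lemma~\ref{lem:angularmodel} gives $v_\beta>0$ on $(0,m]$, so $m<a_D(\beta)$.
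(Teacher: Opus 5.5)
Your proposal is correct and follows essentially the same route as the paper. Both arguments take a Frobenius expansion at the regular singular point $a=0$, with exponents $\pm\tfrac12$ and coefficients polynomial in $\lambda$ bounded locally uniformly, then continue analytically through the nonsingular equation from a small $a_*$ to $s_0$, and finally divide by $v_{\lambda_0}(s_0)>0$. The only cosmetic difference is that the paper multiplies by $G_K$ to obtain polynomial coefficients, an explicit three-term recurrence and the bound $|p_n|\le p_0Q^n$; it also records the center estimates $\partial_\lambda v_\lambda=O(a^{3/2})$ used later. You instead invoke the general Frobenius majorant theorem for the normalized form.
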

\begin{proof}
Multiplying \eqref{eq:modelODE} by $G_K$ gives
\[
 G_K^2v''+G_KG_K'v'+(\lambda G_K-c^2)v=0.
\]
Since $G_K=a(2c-Ka)$, the center is a regular singular point with indicial roots $\pm\tfrac12$. For the regular root write
\[
 v_\lambda(a)=a^{1/2}\sum_{n=0}^\infty p_n(\lambda)a^n,
 \qquad p_0>0\ \text{fixed},\quad p_{-1}=0.
\]
Comparing coefficients gives, for $n\ge1$,
\[
\begin{aligned}
 4c^2n(n+1)p_n
 &+2c\bigl[\lambda-Kn(2n-1)\bigr]p_{n-1}\\
 &+\bigl[K^2((n-1)^2-\tfrac14)-K\lambda\bigr]p_{n-2}=0.
\end{aligned}
\]
Thus each $p_n$ is a polynomial in $\lambda$. On a complex parameter disk $|\lambda|\le L$, the recurrence yields
\[
 |p_n|\le C_1|p_{n-1}|+C_2|p_{n-2}|,\qquad
 C_1=\frac{L}{4c}+\frac{|K|}{c},\quad
 C_2=\frac{K^2+|K|L}{4c^2}.
\]
Choose $Q\ge\max\{1,2C_1,\sqrt{2C_2}\}$. Induction gives $|p_n(\lambda)|\le p_0Q^n$, so the series after removal of the factor $a^{1/2}$ converges jointly and locally uniformly for $|a|<Q^{-1}$ and $|\lambda|\le L$. Taking the bound on a larger parameter disk and applying Cauchy's estimate also controls its parameter derivatives. Since $p_0$ is independent of $\lambda$,
\[
 \partial_\lambda v_\lambda=O(a^{3/2}),\qquad
 \partial_\lambda v_\lambda'=O(a^{1/2}),
\]
locally uniformly in the spectral parameter.

Choose a small positive $a_*$ in this common interval. The two initial values at $a_*$ are analytic in $\lambda$. Analytic dependence for the nonsingular ordinary equation then continues the solution to a neighborhood of $(\lambda_0,s_0)$. Since $v_{\lambda_0}(s_0)>0$, division in \eqref{eq:response} is valid nearby. This proves the asserted joint analyticity and the center estimates used below.
\end{proof}

\begin{theorem}[Spectral-parameter response and the quadratic equal-area deficit]\label{thm:spectral-response}
For every $K\in\mathbb R$, $\lambda>0$, and $0<s<a_D(\lambda)$,
\begin{equation}
 \partial_\lambda R_\lambda(s)
 =-\frac{\int_0^s v_\lambda^2\,da}{v_\lambda(s)^2}<0.
 \label{eq:freqderiv}
\end{equation}
Rescale the regular family by one fixed positive factor so that $v_\beta(m)=1$, and write $M_0=\int_0^m v_\beta^2\,da$, consistently with \eqref{eq:modelenergy}. Then, for fixed $K\in\mathbb R$, $A>0$ with $KA<4\pi$ if $K>0$, and sufficiently small $|\varepsilon|<m$,
\begin{equation}
 \kappa_K(m+\varepsilon,m-\varepsilon)
 =\beta-C_{K,A}\varepsilon^2+O(\varepsilon^4),\qquad
 C_{K,A}=\frac{c^2G_K'(m)}{2M_0G_K(m)^2}>0.
 \label{eq:quadratic}
\end{equation}
No uniform remainder at $KA\uparrow4\pi$ is asserted.
\end{theorem}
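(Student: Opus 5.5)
The plan has two parts. First I obtain \eqref{eq:freqderiv} from a Wronskian identity in the spectral parameter. Then I characterize $\kappa_K(m+\varepsilon,m-\varepsilon)$ implicitly as the root of a transmission condition and apply the analytic implicit function theorem.

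\emph{Step 1: the derivative formula.} By Lemma~\ref{lem:analytic-regular-family} I may differentiate in $\lambda$. Put $w=\partial_\lambda v_\lambda$. Differentiating \eqref{eq:modelODE} gives
\[
 -(G_Kw')'+\frac{c^2}{G_K}w-\lambda w=v_\lambda .
\]
Multiply this by $v_\lambda$, multiply the equation for $v_\lambda$ by $w$, subtract, and integrate over $(\epsilon_0,s)$. This yields
\[
 -\bigl[G_K(v_\lambda w'-wv_\lambda')\bigr]_{\epsilon_0}^{s}=\int_{\epsilon_0}^s v_\lambda^2\,da .
\]
The center term tends to zero as $\epsilon_0\downarrow0$. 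Indeed $G_K\sim2ca$, $v_\lambda=O(a^{1/2})$, $v'_\lambda=O(a^{-1/2})$, and $w=O(a^{3/2})$, $w'=O(a^{1/2})$ by the center estimates of Lemma~\ref{lem:analytic-regular-family}, so the bracket is $O(a^{2})$. On the other hand, $\partial_\lambda R_\lambda=G_K(w'v_\lambda-v_\lambda'w)/v_\lambda^2$. Combining the two gives \eqref{eq:freqderiv}, with strict negativity since $v_\lambda>0$ on $(0,s)$.

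\emph{Step 2: the transmission characterization.} Fix the normalization $v_\beta(m)=1$. Since $v_\beta>0$ on $(0,m]$ (Lemma~\ref{lem:angularmodel} together with \eqref{eq:responsezero}), continuity gives $a_D(\lambda)>m+|\varepsilon|$ for $(\lambda,\varepsilon)$ near $(\beta,0)$; this holds for every sign of $K$.

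For such $\lambda$, glue the branch profiles $f_\pm=v_\lambda/v_\lambda(m\pm\varepsilon)$. They have common interface trace $1$, so they define a strictly positive element of $\mathcal H$. This element solves the weak whole-line equation with parameter $\lambda$ exactly when there is no Dirac mass at the interface. By \eqref{eq:atomic-interface} with $b_0=0$, rewritten in area coordinates, that condition is
\[
 \Phi(\lambda,\varepsilon):=R_\lambda(m+\varepsilon)+R_\lambda(m-\varepsilon)=0 .
\]
Lemma~\ref{lem:positive-ground-state} then identifies any such root $\lambda$ with $\kappa_K(m+\varepsilon,m-\varepsilon)$.

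\emph{Step 3: solving and expanding.} The function $\Phi$ is real-analytic near $(\beta,0)$ by Lemma~\ref{lem:analytic-regular-family}. It satisfies $\Phi(\beta,0)=2R_\beta(m)=0$, and by Step~1 $\partial_\lambda\Phi(\beta,0)=-2M_0\neq0$. The analytic implicit function theorem gives a unique analytic root $\lambda(\varepsilon)$, which is therefore $\kappa_K(m+\varepsilon,m-\varepsilon)$. Since $\Phi$ is even in $\varepsilon$, $\lambda(\varepsilon)$ is even, so the remainder after the quadratic term is $O(\varepsilon^4)$.

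To find the quadratic coefficient, expand to second order:
\[
 0=-2M_0(\lambda-\beta)+\varepsilon^2R_\beta''(m)+O\bigl(\varepsilon^4+|\lambda-\beta|\varepsilon^2+|\lambda-\beta|^2\bigr).
\]
Differentiating the Riccati equation \eqref{eq:riccati} once gives
\[
 R''=-\frac{2RR'}{G_K}-\frac{(c^2-R^2)G_K'}{G_K^2}.
\]
At $a=m$ we have $R=0$, so $R_\beta''(m)=-c^2G_K'(m)/G_K(m)^2$. Hence $\lambda-\beta=-C_{K,A}\varepsilon^2+O(\varepsilon^4)$ with the stated constant. Finally, $G_K'(m)=2c-2Km=4\pi-KA>0$ under \eqref{eq:assumptions}, so $C_{K,A}>0$.

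The main obstacle is Step~2: verifying rigorously that the root of the transmission equation is exactly the kernel minimizer. This needs the equivalence of \eqref{eq:joint-form-domain} with the area-coordinate form domain, the fact that the glued profile lies in $\mathcal H$, and that flux cancellation is precisely the absence of an interface atom. These ingredients are already available in Section~\ref{sec:model} and Lemma~\ref{lem:positive-ground-state}.
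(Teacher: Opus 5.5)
Your proposal is correct and follows the paper's proof essentially step for step. The shared steps are the $\lambda$-Wronskian identity $[G_K(v_\lambda\dot v'-v_\lambda'\dot v)]'=-v_\lambda^2$ with vanishing center term, and the transmission root $R_\lambda(m+\varepsilon)+R_\lambda(m-\varepsilon)=0$, identified with $\kappa_K$ through Lemma~\ref{lem:positive-ground-state}. They conclude with the implicit function theorem using $\partial_\lambda=-2M_0$, evenness in $\varepsilon$, and $R_\beta''(m)$ read off from the differentiated Riccati equation.

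The differences are only in presentation. The paper obtains the interface condition by integrating by parts directly against $\zeta\in\mathcal H$, rather than citing \eqref{eq:atomic-interface}, which is stated for minimizers. It also spells out the uniform positivity of $v_\lambda$ near the center, which your argument covers by appealing to continuity.
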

\begin{proof}
Keep the leading center coefficient of $v_\lambda$ independent of $\lambda$. Lemma~\ref{lem:analytic-regular-family} justifies differentiating the regular family and its center expansion. With $\dot v=\partial_\lambda v_\lambda$, subtraction of the differentiated equation gives
\[
 [G_K(v_\lambda\dot v'-v_\lambda'\dot v)]'=-v_\lambda^2.
\]
The center term is zero, proving \eqref{eq:freqderiv}. Differentiating \eqref{eq:riccati} at $R_\beta(m)=0$ also gives
\begin{equation}
 \partial_\lambda R_\beta(m)=-M_0,\qquad
 R_\beta''(m)=-c^2G_K'(m)/G_K(m)^2.
 \label{eq:derivs-at-m}
\end{equation}
Choose $\delta>0$ with $m+\delta<a_D(\beta)$. The analytic dependence is supplied by Lemma~\ref{lem:analytic-regular-family}. For spectral parameters near $\beta$, the fixed positive leading coefficient ensures positivity on a common small center interval; uniform convergence on its complement in $(0,m+\delta]$ ensures positivity there too. Thus both nearby branch profiles remain positive throughout.

Join the two branch solutions, normalized to common interface value $1$, as a whole-line profile $U$; let $\rho_{\rm mod}$ be the pushforward of the two area measures by their signed model Green-time coordinates. Its interface derivatives are $U'(0+)=-R_\lambda(m+\varepsilon)$ and $U'(0-)=R_\lambda(m-\varepsilon)$. Integration by parts on the two half-lines gives, for $\zeta\in\mathcal H$,
\[
 \langle U,\zeta\rangle_{\mathcal H}-\lambda\int U\zeta\,d\rho_{\rm mod}
 =[R_\lambda(m+\varepsilon)+R_\lambda(m-\varepsilon)]\zeta(0).
\]
The center decay removes the infinite-end terms. Thus $U$ is a global weak eigenprofile exactly when
\begin{equation}
 R_\lambda(m+\varepsilon)+R_\lambda(m-\varepsilon)=0.
 \label{eq:matchingroot}
\end{equation}
At $(\beta,0)$ the derivative with respect to the spectral parameter is $-2M_0$. The implicit function theorem gives an analytic even root $\lambda(\varepsilon)$; the whole-line weak equation and positivity identify it with $\kappa_K$ by Lemma~\ref{lem:positive-ground-state}. Evenness first gives $\lambda(\varepsilon)-\beta=O(\varepsilon^2)$, so Taylor expansion, including the quadratic error in the spectral parameter, gives
$0=-2M_0(\lambda(\varepsilon)-\beta)+R_\beta''(m)\varepsilon^2+O(\varepsilon^4)$.
This proves \eqref{eq:quadratic}. Finally $G_K'(m)=4\pi-2Km>0$ in the admitted half-area range.
\end{proof}

\paragraph{Flat specialization.}
Let $j=j'_{1,1}$ and $b_{\rm E}=j^2$. For $K=0$,
$v_\beta(a)=J_1(j\sqrt{a/m})/J_1(j)$ and $\beta=\pi b_{\rm E}/m$.
The Bessel recurrence relations give
$J_0(j)=J_2(j)=J_1(j)/j$, while differentiation of
$\tfrac12 r^2[J_1(jr)^2-J_0(jr)J_2(jr)]$ gives $rJ_1(jr)^2$.
Changing variables $a=mr^2$ therefore yields
\[
 M_0=\frac{2m}{J_1(j)^2}\int_0^1 rJ_1(jr)^2\,dr
 =m\left(1-\frac1{b_{\rm E}}\right).
\]
Substitution in \eqref{eq:quadratic} gives, for fixed $m>0$,
\begin{equation}
 \kappa_0(m+\varepsilon,m-\varepsilon)
 =\frac{\pi b_{\rm E}}m
 \left[1-\frac1{2(b_{\rm E}-1)}
 \left(\frac{\varepsilon}{m}\right)^2
 +O\!\left(\left(\frac{\varepsilon}{m}\right)^4\right)\right].
 \label{eq:flat-quadratic-specialization}
\end{equation}
This positive flat coefficient is distinct from the negative-curvature global coefficient below, which vanishes as $K\uparrow0$.

\begin{proposition}[Model deficit and weighted trial-mass gain]\label{prop:negative-global}
\emph{(i) Model estimates.} Let $K<0$ and set $b=4\pi$, $\vartheta=e^{-bt}$, $D=b-Km(1-\vartheta)$ and
\[
 \mathcal J_{K,A}=\int_0^\infty
 \frac{2b^2(-K)\vartheta(1-\vartheta)}{D^3}w_0\,dt.
\]
Then $0<\mathcal J_{K,A}\le(-K)/(2b)$ and, for every $|\varepsilon|<m$,
\begin{align}
 \kappa_K(m+\varepsilon,m-\varepsilon)
 &\le\frac{\beta}{1+\mathcal J_{K,A}\varepsilon^2/(2M_0)},\label{eq:negative-global-value}\\
 \beta-\kappa_K(m+\varepsilon,m-\varepsilon)
 &\ge\frac{\beta\mathcal J_{K,A}\varepsilon^2}
 {2M_0+\mathcal J_{K,A}\varepsilon^2}.\label{eq:negative-global-deficit}
\end{align}
\emph{(ii) Domain estimate.} Independently, let $K\le0$ and take any distinct pair of interior Green poles on a domain in setting (L) or (G) with $K_g\le K$. Let $\alpha=a_++a_-$, $T=\alpha^{-1}$ and $T_*(a)=T_{K,m}(a/2)$. Then
\begin{equation}
 \mathcal L=\int_0^\infty(2a_0-\alpha)w_0\,dt
 =\int_0^A\int_{T(a)}^{T_*(a)}w_0(t)\,dt\,da>0,
 \qquad \kappa(H)\le\frac{2\beta M_0}{2M_0+\mathcal L}.
 \label{eq:negativegain}
\end{equation}
\end{proposition}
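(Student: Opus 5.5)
For part (i), I would refine the even-test argument from the negative-curvature case of Proposition~\ref{prop:modelsplit}. Take $u(t)=g_0(|t|)$ and branch areas $s_\pm=m\pm\varepsilon$. By \eqref{eq:negative-model-mass},
\[
\mathcal N(u)=2\beta M_0,\qquad D_{\rm mod}(u)=2M_0+\mathcal L_K,
\]
where $\mathcal L_K=\int_0^\infty(2a_0-a_{K,m+\varepsilon}-a_{K,m-\varepsilon})w_0\,dt$. Fix $t>0$ and set $f(s)=a_{K,s}(t)$. Taylor's formula with integral remainder gives
\[
2f(m)-f(m+\varepsilon)-f(m-\varepsilon)=\int_0^\varepsilon(\varepsilon-\sigma)\bigl(|f''(m+\sigma)|+|f''(m-\sigma)|\bigr)\,d\sigma.
\]
By \eqref{eq:negative-model-concavity}, $|f''(s)|=2b^2(-K)\vartheta(1-\vartheta)/D(s)^3$ with $D(s)=b-Ks(1-\vartheta)$. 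For $K<0$ this $D$ is affine and positive in $s$, and $x\mapsto x^{-3}$ is convex. Hence $|f''(m+\sigma)|+|f''(m-\sigma)|\ge2|f''(m)|$, and the bracket is at least $\varepsilon^2|f''(m)|$. Integrating against $w_0>0$ gives $\mathcal L_K\ge\mathcal J_{K,A}\varepsilon^2$, so
\[
\kappa_K\le\frac{2\beta M_0}{2M_0+\mathcal J_{K,A}\varepsilon^2},
\]
which is \eqref{eq:negative-global-value}. The deficit bound \eqref{eq:negative-global-deficit} is then algebra. For the size of $\mathcal J_{K,A}$: it is positive because the integrand is positive for $t>0$. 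For the upper bound, use $\vartheta(1-\vartheta)\le1/4$, $D\ge b$ and $\int_0^\infty w_0=1$, which give $\mathcal J_{K,A}\le(-K)/(2b)$.

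For part (ii), I would use the same profile $u(t)=g_0(|t|)$ on the actual signed Green function $H$. By Lemma~\ref{lem:energy}, $\mathcal N(u)=2\beta M_0$ independently of the domain. For the mass, the coarea density \eqref{eq:green-time-density} and $a_j'=-B_j$ give, for each branch,
\[
\int_0^\infty g_0^2B_j\,dt=s_j-\int_0^\infty a_jw_0\,dt.
\]
This follows by integrating by parts on $[t_0,t_1]$ and letting $t_0\downarrow0$ (using $a_j(0+)=s_j$, $g_0(0)=1$) and $t_1\to\infty$ (using $a_j\to0$, $g_0=O(e^{-ct})$). Summing over both branches and subtracting the model identity $2M_0=A-\int2a_0w_0$ gives $D_H(u)=2M_0+\mathcal L$. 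The Fubini form of $\mathcal L$ then comes from writing $2a_0(t)-\alpha(t)=\int_0^A\mathbf 1\{\alpha(t)\le a<2a_0(t)\}\,da$. Since $\alpha$ and $2a_0$ are strictly decreasing with inverses $T$ and $T_*$, the condition is equivalent to $T(a)\le t<T_*(a)$.

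Positivity of $\mathcal L$ is the substantive step, and I expect it to be the main obstacle, specifically the strictness. By \eqref{eq:order}, $G_j\ge G_K$, so
\[
T_j(a)=\int_a^{s_j}G_j^{-1}\le T_{K,s_j}(a),
\]
which means $a_j(t)\le a_{K,s_j}(t)$. For $K\le0$, the second $s$-derivative \eqref{eq:negative-model-concavity} is $\le0$ (and identically zero when $K=0$), so $a_{K,s_+}+a_{K,s_-}\le2a_{K,m}=2a_0$. Hence $\alpha\le2a_0$ and $\mathcal L\ge0$. For strictness I would invoke Theorem~\ref{thm:transmission}: some $G_j$ strictly exceeds $G_K$ on an interior interval $(a_0',a_1')$. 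Then $T_j(a)<T_{K,s_j}(a)$ for all $a<a_0'$, so $a_j(t)<a_{K,s_j}(t)$ for all large $t$. This makes $2a_0-\alpha>0$ on a set of positive measure, and pairing with $w_0>0$ gives $\mathcal L>0$. The point to verify carefully is the endpoint behaviour near $t=0$, where $B_j$ may be unbounded, in the integration by parts; integrability of $B_j$ and boundedness of $g_0$ should suffice. Finally, $\kappa(H)\le\mathcal N(u)/D_H(u)=2\beta M_0/(2M_0+\mathcal L)$.
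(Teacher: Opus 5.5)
Your proposal is correct. Its architecture is the paper's: the even test profile $g_0(|t|)$, the branchwise integration by parts giving $D=2M_0+\mathcal L$, the Tonelli rewriting, and strictness from Theorem~\ref{thm:transmission}. The two technical steps are done by different routes.

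\textbf{Part (i).} The paper computes the second difference $2a_{K,m}-a_{K,m+\varepsilon}-a_{K,m-\varepsilon}$ in closed form, \eqref{eq:negative-exact-split-defect}, and then uses $D(D^2-\delta_K^2\varepsilon^2)\le D^3$. Your Taylor remainder, combined with convexity of $s\mapsto D(s)^{-3}$, reaches the same bound $\mathcal L_K\ge\mathcal J_{K,A}\varepsilon^2$ without computing that difference.

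\textbf{Part (ii).} Here the decomposition genuinely differs.
\begin{itemize}
\item The paper adds the two inequalities \eqref{eq:order} into one coefficient inequality \eqref{eq:negativetotal} for the combined area $\alpha$. Its excess $\mathcal S-\tfrac K2\Delta^2$ is written through the level-by-level imbalance $\Delta=a_+-a_-$ of the actual superlevel sets. The paper then compares the clocks $T\le T_*$ directly, working only with the coefficient $G_K$.
\item You factor through the model instead: $\alpha\le a_{K,s_+}+a_{K,s_-}\le 2a_0$. The first inequality is the per-branch clock comparison, and the second is the split concavity \eqref{eq:negative-model-concavity}.
\end{itemize}

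Your chain buys something extra. It gives $\mathcal L=\mathcal L_K+\int_0^\infty(a_{K,s_+}+a_{K,s_-}-\alpha)w_0\,dt\ge\mathcal L_K$. So for $K<0$ the domain gain already contains the model split gain $\mathcal J_{K,A}\varepsilon^2$ of part (i), with $\varepsilon=(s_+-s_-)/2$. This links (i) and (ii) directly.

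Your strictness argument is a clean substitute for the paper's remark that $T_*-T>0$ once the integration range contains the excess interval. You show that $a_j(t)<a_{K,s_j}(t)$ for every $t$ beyond the time at which $a_j$ drops below the left endpoint of the excess interval. That gives $2a_0-\alpha>0$ on a half-line.
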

\begin{proof}
\emph{Proof of (i).} Put $\delta_K(t)=(-K)(1-\vartheta)$. Formula~\eqref{eq:model-arbitrary-superlevel} gives
\begin{equation}
 2a_{K,m}-a_{K,m+\varepsilon}-a_{K,m-\varepsilon}
 =\frac{2b^2\vartheta \delta_K(t)\varepsilon^2}{D(D^2-\delta_K(t)^2\varepsilon^2)}.
 \label{eq:negative-exact-split-defect}
\end{equation}
Since $D=b+m\delta_K(t)$ and $|\varepsilon|<m$,
\[
 D^2-\delta_K(t)^2\varepsilon^2
 =(b+\delta_K(t)(m-\varepsilon))(b+\delta_K(t)(m+\varepsilon))>0.
\]
Moreover $D^2-\delta_K(t)^2\varepsilon^2\le D^2$, so
$D(D^2-\delta_K(t)^2\varepsilon^2)\le D^3$. Hence the right-hand side of \eqref{eq:negative-exact-split-defect} is bounded below by
$2b^2\vartheta \delta_K(t)\varepsilon^2/D^3$. Integrating against $w_0$ gives
$\mathcal L_K\ge\mathcal J_{K,A}\varepsilon^2$; \eqref{eq:negative-model-test} proves the two bounds. Also $D\ge b$, $\vartheta(1-\vartheta)\le1/4$ and $\int w_0=1$ give
$\mathcal J_{K,A}\le(-K)/(2b)$.

\emph{Proof of (ii).} Write $G(a)=B_+(t)+B_-(t)$ at $a=\alpha(t)$ and $\Delta(a)=a_+(t)-a_-(t)$. Since
\[
 a_+^2+a_-^2=\tfrac12(a^2+\Delta^2),
 \qquad \sum_{j=\pm}G_K(a_j)=a(4\pi-Ka/2)-\tfrac K2\Delta^2,
\]
adding \eqref{eq:order} gives
\begin{equation}
 G(a)-a(4\pi-Ka/2)=\mathcal S(a)-\frac K2\Delta(a)^2\ge0,
 \quad \mathcal S=\sum_j[B_j-G_K(a_j)]\ge0.
 \label{eq:negativetotal}
\end{equation}
Since $\alpha'(t)=-G(\alpha(t))$ and $\alpha(0+)=A$, its inverse satisfies
\[
 T(a)=\int_a^A\frac{d\sigma}{G(\sigma)},\qquad
 T_*(a)=\int_a^A\frac{d\sigma}{2G_K(\sigma/2)},\qquad
 2G_K(\sigma/2)=\sigma(4\pi-K\sigma/2).
\]
The coefficient inequality \eqref{eq:negativetotal} therefore gives $T\le T_*$; inversion of these decreasing functions gives $\alpha\le2a_0$. Theorem~\ref{thm:transmission} supplies an interior interval of strict coefficient excess. The difference $T_*-T$ is positive when the integration range contains that interval, which is the strictness needed below. The branch integration by parts in \eqref{eq:negative-model-mass} yields
$D_H(g_0(|\cdot|))=2M_0+\mathcal L$ and energy $2\beta M_0$.
The equivalence $T(a)<t<T_*(a)\iff\alpha(t)<a<2a_0(t)$ and Tonelli give the double-integral formula, bounded by $A\int_0^\infty w_0=A$. The strict interval and $w_0>0$ make it positive.
\end{proof}
The gain $\mathcal L$ concerns the weighted mass of the fixed trial profile; the area remains $A$. For fixed $K<0,A$, the model estimate holds for all $|\varepsilon|<m$. Its coefficient need not be locally optimal and vanishes as $K\uparrow0$, whereas the flat local coefficient in \eqref{eq:quadratic} is positive.

\subsection{Deficits for an interior two-pole selection}
For an interior pair selected in the main proof, with the corresponding trial function $F$ built from the minimizing profile and satisfying the two moment conditions, set
$\widehat\kappa=\kappa_K(s_+,s_-)$ and use the kernel increment $d>0$ of \eqref{eq:kernelincrement}. Then
\begin{align}
 \beta-\lambda_2
 &\ge(\beta-\widehat\kappa)+
 \frac{\widehat\kappa^2d}{1+\widehat\kappa d},\label{eq:twogaps}\\
 \Delta_{\rm sp}
 &\ge\mathscr R(F)+2(\widehat\kappa^{-1}-\beta^{-1})+2d>0.\label{eq:reciprocal-two-gaps}
\end{align}
Indeed, writing $F=X+iY$, the two moment conditions in the main proof give
$X,Y\perp_{L^2}\{1,\phi_1\}$. The restricted min--max principle yields
\begin{equation}
 \lambda_2\int_\Omega X^2\,dv_g\le E(X),\qquad
 \lambda_2\int_\Omega Y^2\,dv_g\le E(Y),\qquad
 \lambda_2 M(F)\le E(F).
 \label{eq:shifted-scalar-dependence}
\end{equation}
Thus $\lambda_2\le E/M=\kappa(H)$. From \eqref{eq:compression},
\[
 \kappa(H)\le\frac{\widehat\kappa}{1+\widehat\kappa d},
 \qquad
 \frac1{\kappa(H)}\ge\frac1{\widehat\kappa}+d.
\]
Subtracting the first bound from $\beta$ gives \eqref{eq:twogaps}; substituting the second into
$\Delta_{\rm sp}=\mathscr R(F)+2/\kappa(H)-2/\beta$ gives
\eqref{eq:reciprocal-two-gaps}. The scalar estimate here uses the shifted orthogonality, not energy isotropy alone. For $K\le0$, \eqref{eq:negativegain} alternatively gives
\begin{equation}
 \Delta_{\rm sp}\ge\mathscr R(F)+\mathcal L/(\beta M_0)>0.
 \label{eq:reciprocal-negative-gap}
\end{equation}
For $K<0$, with $\varepsilon=(s_+-s_-)/2$, the model bound gives
\begin{equation}
 \Delta_{\rm sp}\ge\mathscr R(F)+
 \frac{\mathcal J_{K,A}\varepsilon^2}{\beta M_0}+2d.
 \label{eq:negative-combined-global}
\end{equation}
These spectral estimates use the moment-centered minimizing profile; the even test profiles above only bound its minimum value. They apply to the selected interior pair. The one-pole alternative instead uses the tail-subtracted physical energy.

\subsection{Uniform coercivity for time profiles}\label{sec:uniformkernel}
\begin{proposition}[Uniform coercivity away from the minimizing profile]\label{prop:uniformkernel}
Fix $K\in\mathbb R$ and $A>0$, with $KA<4\pi$ if $K>0$, and set $\beta=\beta_K(A/2)$. Then
\begin{equation}
 \gamma_{K,A}:=2-\frac{A\beta}{2c}=2-\frac{A\beta}{4\pi}>0.
 \label{eq:uniform-kernel-gamma}
\end{equation}
In the setting of Proposition~\ref{prop:kernel}, assume $\mu(X)=A$ and $\kappa(h)\le\beta$. Let $P_h$ be the $\mathcal H$-orthogonal projection onto $\Span\{u_h\}$. Since Proposition~\ref{prop:kernel} gives
$\|u_h\|_{\mathcal H}^2=\mathcal N(u_h)=\kappa(h)$,
\[
 P_hv=\kappa(h)^{-1}\langle v,u_h\rangle_{\mathcal H}u_h.
\]
For every real $v\in\mathcal H$,
\begin{equation}
 \mathcal N(v)-\kappa(h)D_h(v)
 \ge\gamma_{K,A}\mathcal N(v-P_hv).
 \label{eq:uniform-kernel-remainder}
\end{equation}
In particular, the same constant applies to every domain in Theorem~\ref{thm:main} with these $K,A$ and every closed-bidisk parameter $h=H_\xi$ of Section~\ref{sec:center}, as well as every positive model split of total area $A$.
\end{proposition}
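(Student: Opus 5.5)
The plan is a spectral-gap argument for the operator $J_h^*J_h$ on $\mathcal H$, using a trace bound to control its second eigenvalue, followed by a separate verification that $\gamma_{K,A}>0$.

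\emph{Reduction to a second-eigenvalue bound.} Write $J_h u=u\circ h$ as in the proof of Proposition~\ref{prop:kernel}, so that $D_h(v)=\|J_hv\|^2_{L^2(\mu)}=\langle J_h^*J_hv,v\rangle_{\mathcal H}$. The operator $J_h^*J_h$ on $\mathcal H$ has the same nonzero spectrum as $\mathcal T_h=J_hJ_h^*$. Its top eigenvalue $\Lambda_1=\Lambda(h)=\kappa(h)^{-1}$ is simple, with eigenvector $u_h=\kappa J_h^*e_h$. Split $v=P_hv+w$ with $w\perp_{\mathcal H}u_h$. Since $u_h$ is an eigenvector of $J_h^*J_h$, the cross term is
\[
\langle J_hP_hv,J_hw\rangle=\Lambda_1\langle P_hv,w\rangle_{\mathcal H}=0 .
\]
Moreover $\mathcal N(P_hv)=\kappa D_h(P_hv)$. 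Hence
\[
\mathcal N(v)-\kappa D_h(v)=\mathcal N(w)-\kappa D_h(w)\ge(1-\kappa\Lambda_2)\mathcal N(w),
\]
where $\Lambda_2$ is the second eigenvalue of $\mathcal T_h$, counted with multiplicity.

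\emph{Trace bound.} The operator $\mathcal T_h$ is positive semidefinite with a continuous bounded kernel, and its diagonal is $R_c(h(x),h(x))=1/(2c)$. So its trace is $\mu(X)/(2c)=A/(2c)$, which gives $\Lambda_1+\Lambda_2\le A/(2c)$. Combined with $\kappa\le\beta$, this yields
\[
\kappa\Lambda_2\le\kappa A/(2c)-1\le\beta A/(2c)-1,
\qquad\text{so}\qquad
1-\kappa\Lambda_2\ge2-A\beta/(2c)=\gamma_{K,A}.
\]
This proves \eqref{eq:uniform-kernel-remainder}. The statement about all domains, closed-bidisk parameters and model splits then follows, since each has $\mu(X)=A$ and $\kappa\le\beta$. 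This uses \eqref{eq:positiveallpair}, the boundary-parameter continuity from Lemma~\ref{lem:continuity}, and Proposition~\ref{prop:modelsplit}.

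\emph{Positivity of $\gamma_{K,A}$; this is the main obstacle.} We need $m\beta_K(m)<2c=4\pi$. The bound should degenerate at the hemisphere, where the value is exactly $2K\cdot2\pi/K=4\pi$, so it cannot come from a soft argument. I would split by the sign of $K$.

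\emph{Case $K>0$.} Normalize $K=1$ by scaling. Use the sector-$1$ test function $S_K(r)$ in the angular quotient of Lemma~\ref{lem:angularmodel}. The identity behind \eqref{eq:picone} gives the quotient $2K+S_K'(R)S_K(R)^2/\int_0^RS_K^3$. With $\mathsf c=\cos R\in(0,1)$, one has $\int_0^R\sin^3=(1-\mathsf c)^2(2+\mathsf c)/3$ and $m=2\pi(1-\mathsf c)$. The required inequality reduces to $4+\mathsf c+\mathsf c^2<4+2\mathsf c$, that is $\mathsf c^2<\mathsf c$, which holds strictly.

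\emph{Case $K=0$.} The test function $x$ on the unit disk is not an eigenfunction, so $b_{\rm E}<4$ strictly and $m\beta=\pi b_{\rm E}<4\pi$.

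\emph{Case $K<0$.} The same $S_K$ test fails here. Instead, apply the trace reasoning to the equal-area model itself, where $\kappa_K(m,m)=\beta$. Take the trial profile $e^{-c|t|}$, which has $\mathcal N=2c$. Then
\[
\beta^{-1}=\Lambda_1\ge(2c)^{-1}\int e^{-2c|t|}\,d\rho_{\rm mod}.
\]
In area coordinates, the branch integral equals $m-\int_0^1a_{K,m}(x)\,dx$, with $x=e^{-2ct}$. By \eqref{eq:model-arbitrary-superlevel}, $a_{K,m}(x)<mx$ for $x\in(0,1)$ when $K<0$. Hence the integral over both branches strictly exceeds $m$, so $\beta^{-1}>m/(2c)$, which is the required bound.

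I expect the sign split and the explicit trigonometric check to be the delicate part. The operator-theoretic reduction itself is routine.
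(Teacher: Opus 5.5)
Your argument is correct. The operator-theoretic reduction is exactly the paper's: you split off $P_hv$, bound $\Lambda_2\le A/(2c)-\Lambda_1$ by the trace, and obtain $1-\kappa\Lambda_2\ge 2-A\kappa/(2c)$. Your $K>0$ check is also the paper's. The sector-one test $S_K$ gives the quotient $K(\mathsf c^2+\mathsf c+4)/[(1-\mathsf c)(\mathsf c+2)]$, which the paper writes with $x=\cos(\sqrt K R)=1-KA/(4\pi)$.

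One small point on the trace. On an abstract measure space $X$, ``continuous kernel with diagonal $1/(2c)$'' does not by itself justify the trace formula. The clean route, which the paper uses, is the factorization $\mathcal T_h=J_hJ_h^*$ together with $\|J_h^*\|_{\rm HS}^2=\int_X\|R_c(\cdot,h(x))\|_{\mathcal H}^2\,d\mu=A/(2c)$, i.e.\ Parseval for the evaluation vectors followed by Tonelli.

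Where you genuinely differ is the positivity check for $K\le0$.
\begin{itemize}
\item The paper treats all $K\le0$ at once. It uses the profile $f=r-r^3/3$ in conformal radius together with Lemma~\ref{lem:cumulative} (comparison curvature zero). This gives the explicit uniform bound $A\beta\le 224\pi/33$, i.e.\ $\gamma_{K,A}\ge 10/33$, which is recorded in \eqref{eq:uniform-kernel-explicit-lower}.
\item You use $b_{\rm E}<4$ at $K=0$. For $K<0$ you use the profile $e^{-c|t|}$ in the whole-line kernel problem for the equal-area model; this is the linear Hersch test $r\cos\theta$ on each model disk. You combine it with the explicit superlevel-area inequality $a_{K,m}(x)<mx$.
\end{itemize}
Your route is self-contained: it needs neither the Bol/cumulative-area comparison nor the polynomial integrals. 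But it yields only strict positivity. Your margin $\int e^{-2c|t|}\,d\rho_{\rm mod}-m$ tends to zero as $K\uparrow0$, so the resulting lower bound for $\gamma_{K,A}$ degenerates. That is why you need a separate argument at $K=0$, and why no uniform constant such as $10/33$ comes out.

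Since the proposition only asserts $\gamma_{K,A}>0$, your version proves it. It does not recover the explicit lower bound stated immediately afterwards.
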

\begin{proof}
\emph{Trace and the second eigenvalue.} For an orthonormal basis $(\zeta_n)$ of $\mathcal H$, Parseval for the evaluation vector gives
$\sum_n|\zeta_n(t)|^2=\|R_c(\cdot,t)\|_{\mathcal H}^2=R_c(t,t)=1/(2c)$. Hence Tonelli yields
\begin{equation}
 \|J_h\|_{\rm HS}^2
 =\int_X\sum_n|\zeta_n(h(x))|^2\,d\mu(x)
 =\frac{A}{2c}
 =\operatorname{tr}(J_h^*J_h)=\operatorname{tr}\mathcal T_h.
 \label{eq:kernel-trace}
\end{equation}
Thus $J_h^*J_h$ and $\mathcal T_h=J_hJ_h^*$ are positive trace-class operators; see also \cite[Section 6.3]{Teschl}. Their nonzero spectra, including multiplicities, coincide. Proposition~\ref{prop:kernel} gives
$J_h^*J_h u_h=\kappa(h)^{-1}u_h$, so the largest eigenvalue of $J_h^*J_h$ is
$\Lambda_1=1/\kappa(h)$ and its eigenspace is $\Span\{u_h\}$. If $\Lambda_2$ denotes the next eigenvalue (and $\Lambda_2=0$ in rank one), then
$\Lambda_2\le A/(2c)-\Lambda_1$. The spectral theorem on the orthogonal complement of $u_h$, including the kernel, yields
\begin{equation}
 \mathcal N(v)-\kappa(h)D_h(v)
 \ge\left(1-\frac{\Lambda_2}{\Lambda_1}\right)\mathcal N(v-P_hv)
 \ge\left(2-\frac{A\kappa(h)}{2c}\right)\mathcal N(v-P_hv).
 \label{eq:kernel-trace-coercivity}
\end{equation}
\emph{Positivity of the coefficient.} It remains to prove $A\beta<8\pi$, which is equivalent to positivity in \eqref{eq:uniform-kernel-gamma}. Put $m=A/2$. For $K\le0$, test the model with $f(r)\cos\theta$, $f=r-r^3/3$, in conformal radius. Lemma~\ref{lem:cumulative}, applied to $q_{K,m}$ with comparison curvature zero, increases its mass relative to the flat density $m/\pi$. Since
$\int_0^1(rf'^2+f^2/r)\,dr=14/27$ and $\int_0^1rf^2\,dr=11/72$, one obtains $A\beta\le224\pi/33<8\pi$.
For $K>0$, let $R$ be the half-area radius and $x=\cos(\sqrt K R)=1-KA/(4\pi)\in(0,1)$. The trial $S_K(r)\cos\theta$ has quotient
$K(x^2+x+4)/[(1-x)(x+2)]$, by direct polar integration. Hence
\begin{equation}
 A\beta\le4\pi\frac{x^2+x+4}{x+2}
 =4\pi\left(2-\frac{x(1-x)}{x+2}\right)<8\pi.
 \label{eq:spherical-model-upper-trial}
\end{equation}
\emph{Domain and model parameters.} Since $c=2\pi$ and $\kappa(h)\le\beta$, \eqref{eq:kernel-trace-coercivity} proves \eqref{eq:uniform-kernel-remainder}. For parameters arising from the domain this hypothesis follows from Theorem~\ref{thm:transmission} and continuity in Lemma~\ref{lem:continuity}, including coalescing-pole parameters and parameters on the boundary of the bidisk; for model splits it follows from Proposition~\ref{prop:modelsplit}.
\end{proof}
The same calculation gives the explicit lower bounds
\begin{equation}
 \gamma_{K,A}\ge
 \begin{cases}
 10/33,& K\le0,\\[2pt]
 x(1-x)/(x+2),&K>0,\quad x=1-KA/(4\pi)\in(0,1).
 \end{cases}
 \label{eq:uniform-kernel-explicit-lower}
\end{equation}
These explicit bounds need not be optimal; the spherical bound is not uniformly positive as $KA\uparrow4\pi$.
The remainder controls $\mathcal N(v-P_hv)$ relative to the datum-dependent subspace $\Span\{u_h\}$, also at boundary parameters. The energy is full-line, not tail-subtracted; this is not a physical Neumann spectral-gap estimate.

\paragraph{Optimality in the abstract measure class.}\label{par:abstract-sharpness}
For fixed admissible $K,A$, the constant $\gamma_{K,A}$ itself is the largest one valid for every abstract datum in Proposition~\ref{prop:uniformkernel}. First,
\begin{equation}
 \frac{2c}{A}<\beta<\frac{4c}{A}.
 \label{eq:abstract-beta-window}
\end{equation}
The upper bound was proved above. For the lower bound, the equal-area model time measure has density $B_0(|t|)>0$. Two smooth profiles with disjoint compact supports have linearly independent images under $J_h$, so $J_h^*J_h$ has rank at least two. Its largest eigenvalue $1/\beta$ is therefore strictly smaller than its trace $A/(2c)$, proving the lower bound.
Set
\[
 q=\frac{4c}{A\beta}-1\in(0,1),\qquad
 L=-\frac{\log q}{2c}>0,\qquad
 \rho_L=\frac A2(\delta_{-L}+\delta_L).
\]
This is an admissible abstract datum by taking $X=\mathbb R$, $\mu=\rho_L$ and $h(t)=t$.
In the orthonormal point-mass basis of $L^2(\rho_L)$, its kernel matrix and nonzero eigenvalues are
\begin{equation}
 \frac A{4c}\begin{pmatrix}1&q\\q&1\end{pmatrix},\qquad
 \Lambda_\pm=\frac A{4c}(1\pm q),\qquad \kappa=\Lambda_+^{-1}=\beta.
 \label{eq:two-atom-sharp-kernel}
\end{equation}
The positive minimizing profile is even. The odd function
$v(t)=R_c(t,-L)-R_c(t,L)$ is energy-orthogonal to it, so $P_hv=0$.
Using the reproducing identity and evaluating at the two atoms gives
\[
 \mathcal N(v)=\frac{1-q}{c},\qquad
 D_h(v)=\frac{A(1-q)^2}{4c^2}.
\]
Consequently
\begin{equation}
 \frac{\mathcal N(v)-\beta D_h(v)}{\mathcal N(v)}
 =1-\frac{\Lambda_-}{\Lambda_+}
 =2-\frac{A\beta}{2c}=\gamma_{K,A}.
 \label{eq:abstract-gamma-equality}
\end{equation}
The optimality statement is for the abstract measure class; no geometric realization of this two-atom datum, or optimality in the geometric subclasses, is asserted.
The spectral-window condition cannot be replaced by total mass alone: if instead $L\to\infty$, then $\kappa_L\to4c/A>\beta$ and
$1-\Lambda_-/\Lambda_+\to0$.

\FloatBarrier
\appendix
\section{A complementary conformal-folding argument}\label{sec:alternativefold}
The conformal-folding argument proves the scalar two-disk inequality under either $K_g\le K\le0$, or $K>0$, $KA<4\pi$ and the stronger bound $K_g\le K/2$; see Proposition~\ref{prop:folding-regimes}. It uses the classical conformal-cap degeneration of \cite{GNP,GP} and the cumulative-area comparison in Lemma~\ref{lem:cumulative}.

\begin{lemma}[curvature of summed conformal densities]\label{lem:curvaturesum}
For positive $C^2$ densities $h_1,h_2$, put $h=h_1+h_2$, $p_i=h_i/h$ and $K_i=\mathscr K[h_i]$. Then
\begin{equation}
 \mathscr K[h]=p_1^2K_1+p_2^2K_2
 -\frac{p_1p_2}{2h}|\nabla\log(h_1/h_2)|^2.
 \label{eq:curvaturesum}
\end{equation}
Thus $K_i\le K\le0$ implies $\mathscr K[h]\le K/2$, whereas $K_i\le K/2$ with $K>0$ implies $\mathscr K[h]<K/2$.
\end{lemma}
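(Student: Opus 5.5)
The identity \eqref{eq:curvaturesum} is a direct computation with $\mathscr K[h]=-\Delta\log h/(2h)$. We will express $\Delta\log h$ through the two summands and then read off the two sign consequences. Write $h_i=e^{\phi_i}$, so that $\Delta\phi_i=-2h_iK_i$. Since $h=h_1+h_2$, we have $\nabla h=h_1\nabla\phi_1+h_2\nabla\phi_2$ and $\Delta h=\sum_i h_i(\Delta\phi_i+|\nabla\phi_i|^2)$. Then
\[
 \Delta\log h=\frac{\Delta h}{h}-\frac{|\nabla h|^2}{h^2}
 =\sum_i p_i\Delta\phi_i+\sum_ip_i|\nabla\phi_i|^2-\Bigl|\sum_ip_i\nabla\phi_i\Bigr|^2 .
\]
Because $p_1+p_2=1$, the last two terms form the weighted variance of $\nabla\phi_i$. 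They equal $p_1p_2|\nabla\phi_1-\nabla\phi_2|^2=p_1p_2|\nabla\log(h_1/h_2)|^2$. Using $p_i\Delta\phi_i=-2p_ih_iK_i=-2h\,p_i^2K_i$ and dividing by $-2h$ gives \eqref{eq:curvaturesum}. The only care needed is the variance identity, which is routine.

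For the consequences, the gradient term is always $\le0$ because $p_1p_2>0$ and $h>0$. If $K_i\le K\le0$, then $p_1^2K_1+p_2^2K_2\le K(p_1^2+p_2^2)$. Since $K\le0$ and $p_1^2+p_2^2\ge\tfrac12$ (by $p_1+p_2=1$), this is $\le K/2$. Hence $\mathscr K[h]\le K/2$.

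If $K>0$ and $K_i\le K/2$, then $p_1^2K_1+p_2^2K_2\le (K/2)(p_1^2+p_2^2)$. Here $p_1^2+p_2^2<1$ strictly, since $p_1,p_2\in(0,1)$ pointwise, and $K>0$. Therefore this is strictly less than $K/2$, and adding the nonpositive gradient term keeps the inequality strict pointwise. There is no real obstacle; the only step requiring attention is getting the correct sign and direction of the bound on $p_1^2+p_2^2$ in each sign regime of $K$.
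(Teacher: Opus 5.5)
Your proposal is correct and follows the paper's proof essentially line by line. You expand $\Delta\log h$ as $\sum_i p_i\Delta u_i+p_1p_2|\nabla u_1-\nabla u_2|^2$, substitute $\Delta u_i=-2h_iK_i$, and derive the two consequences from $p_1^2+p_2^2\ge\tfrac12$ and $p_1^2+p_2^2=1-2p_1p_2<1$. Your explicit weighted-variance derivation simply supplies the step the paper leaves to the reader.
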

\begin{proof}
For $u_i=\log h_i$,
$\Delta\log h=p_1\Delta u_1+p_2\Delta u_2+p_1p_2|\nabla u_1-\nabla u_2|^2$.
Use $\Delta u_i=-2h_iK_i$ and divide by $-2h$. The consequences follow from
$p_1^2+p_2^2\ge1/2$ and $p_1^2+p_2^2=1-2p_1p_2<1$.
\end{proof}
For $K<0$, identity $\mathscr K[h]\equiv K/2$ under $K_i\le K$ forces $h_1=h_2$ and $K_i=K$. At $K=0$, it forces $K_i=0$ and a constant ratio on each connected component.

Positive curvature does not generally halve: the densities
$\rho_a(r)=a/(1+Kar^2/4)^2$ each have curvature $K>0$, but
\begin{equation}
 \mathscr K[\rho_a+\rho_b](0)
 =\frac K2+\frac{K(a-b)^2}{2(a+b)^2}>\frac K2\quad(a\ne b).
 \label{eq:positive-fold-counterexample}
\end{equation}
For $K=1,a=1,b=2$ their total area on $\D$ is $32\pi/15<4\pi$. Thus $K_i\le K$ with $K>0$ does not in general imply $\mathscr K[h_1+h_2]\le K/2$. A related obstruction is that the cumulative model area is concave in its total area when $K<0$ and convex when $K>0$, as the rational differentiation in \eqref{eq:negative-model-concavity} shows after setting $r=e^{-ct}$. A fold with isotropic second moment need not split the geometric area equally.

\subsection{Normalization and the external cap input}
Let $f\in C^1([0,1])$, with $f(0)=0$, $f(1)=1$, $f'>0$ on $(0,1)$,
$f(r)=b_0r+O(r^3)$, $b_0>0$, and finite angular energy. The endpoint $f'(1)=0$ is allowed. Define $\mathbf X_f(z)=f(|z|)z/|z|$, $\mathbf X_f(0)=0$, and $X_e(z)=\langle\mathbf X_f(z),e\rangle_{\mathbb R^2}$ for $e\in\mathbb R^2$, identifying $\mathbb C$ with $\mathbb R^2$. We say that a measure has an \emph{isotropic second moment} when $\int \mathbf X_f\mathbf X_f^T$ is a scalar multiple of the identity. All weak convergence in this subsection is against continuous functions on the closed disk $\overline\D$.

For fixed profiles we use planar Hersch--Szeg\H{o} normalization~\cite[Lemmas 2.2.3--2.2.5 and 3.1.1]{GNP}; see also~\cite[Theorem 4]{LaugesenHS}. The proof below allows interior atoms.

\begin{lemma}[normalization, weak stability and orthogonal covariance]\label{lem:fold-normalization}
Let $\nu$ be a finite positive measure on $\overline\D$, with positive total mass and no atoms on $\partial\D$. There is a unique $b=b(\nu)\in\D$ such that
\begin{equation}
 \int \mathbf X_f(T_bz)\,d\nu(z)=0,\qquad T_b(z)=\frac{z+b}{1+\bar b z}.
 \label{eq:fold-center}
\end{equation}
Put $\mathfrak N_f(\nu)=(T_{b(\nu)})_*\nu$. If $\nu_n\rightharpoonup\nu$, each $\nu_n$ satisfies these assumptions, and the limit has positive mass and no boundary atoms, then
\begin{equation}
 b(\nu_n)\longrightarrow b(\nu),\qquad
 \mathfrak N_f(\nu_n)\rightharpoonup\mathfrak N_f(\nu).
 \label{eq:normalization-stability}
\end{equation}
For every rotation or reflection $Q$ of the disk,
\begin{equation}
 b(Q_*\nu)=Qb(\nu),\qquad
 \mathfrak N_f(Q_*\nu)=Q_*\mathfrak N_f(\nu).
 \label{eq:normalization-covariance}
\end{equation}
No equality of the masses along the sequence is required; their convergence follows from weak convergence on $\overline\D$.
\end{lemma}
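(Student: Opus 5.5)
The plan is to treat the centering equation as a monotone vector field problem on the disk. Uniqueness should come from strict monotonicity of $b\mapsto \mathbf X_f\circ T_b$, and existence from a degree argument on $\overline\D$. The three parts of the statement are then handled in order: existence and uniqueness, weak stability, orthogonal covariance.

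\emph{Existence.} Write $V(b)=\int \mathbf X_f(T_bz)\,d\nu(z)$, which is continuous on $\D$ by dominated convergence since $|\mathbf X_f|\le1$. As $b\to\zeta\in\partial\D$, $T_bz\to\zeta$ for every $z\ne-\zeta$. Hence $V(b)\to \nu(\overline\D\setminus\{-\zeta\})\,\zeta=\nu(\overline\D)\,\zeta$, because $\nu$ has no boundary atoms and in particular none at $-\zeta$. This convergence is locally uniform in $\zeta$ by a standard compactness argument. On a circle $|b|=r$ close to one, $V$ is therefore homotopic, through nonvanishing maps, to $b\mapsto b$, so it has degree one and a zero exists inside. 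This is Hersch's argument in the form of \cite[Lemma 2.2.3]{GNP}, and interior atoms cause no difficulty.

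\emph{Uniqueness.} This is the main obstacle, and it is where $f'>0$ and $f(0)=0$ enter. Suppose $b_1,b_2$ are two zeros. Replace $\nu$ by $(T_{b_1})_*\nu$ and use $T_{b_2}=T_{b_1}\circ\tilde T\circ R$, with $R$ a rotation, to reduce to the case $b_1=0$ and a second centering Möbius map $\phi_t(z)=(z+t)/(1+tz)$ with real $t>0$. Then compare the first components $\langle\mathbf X_f(\phi_tz),1\rangle$ and $\langle\mathbf X_f(z),1\rangle$. The hyperbolic translation $\phi_t$ pushes every point of $\overline\D\setminus\{-1,1\}$ strictly toward $1$ along the hyperbolic geodesic foliation. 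I would check that $\langle\mathbf X_f(\phi_sz),1\rangle$ is strictly increasing in $s$ for each fixed $z\ne\pm1$. This is the technical point: it needs $f'>0$ and $f(r)\le r$-type monotonicity of the angular part, and I would attempt it by differentiating in $s$ and using that $\partial_s\phi_s=1-\phi_s^2$ together with the radial and angular decomposition. The points $\pm1$ carry no mass. Integrating gives a strictly larger first moment, contradicting that both moments vanish. If the pointwise monotonicity is delicate, I would instead cite \cite[Lemma 2.2.4]{GNP} and \cite[Theorem 4]{LaugesenHS}, which prove uniqueness for such profiles.

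\emph{Stability and covariance.} For stability, let $b_n=b(\nu_n)$. If a subsequence had $|b_n|\to1$, the uniform boundary asymptotics above, applied to $\nu_n$, would force $|V_n(b_n)|\to\nu(\overline\D)>0$. For this I need uniformity in $n$: the masses converge, and no boundary atom of the limit gives $\limsup_n\nu_n(B(-\zeta,\epsilon))\to0$ as $\epsilon\to0$ uniformly in $\zeta$, by compactness of $\partial\D$. So $b_n$ stays in a compact subset of $\D$. Along a convergent subsequence $b_n\to b'$, the maps $\mathbf X_f\circ T_{b_n}$ converge uniformly on $\overline\D$; the only possible discontinuity of $\mathbf X_f$ is at $0$, where $f(0)=0$ makes it continuous. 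Hence $V(b')=0$, and uniqueness gives $b'=b(\nu)$. Weak convergence of the pushforwards then follows from uniform convergence of $\varphi\circ T_{b_n}$ for continuous $\varphi$. Covariance follows from uniqueness together with $Q\circ T_b\circ Q^{-1}=T_{Qb}$ and $\mathbf X_f\circ Q=Q\,\mathbf X_f$, where for reflections one uses conjugation, for which $T_{\bar b}(\bar z)=\overline{T_b(z)}$.
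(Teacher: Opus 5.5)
Your proposal follows the paper's proof in all four parts:
\begin{itemize}
\item existence by a Hersch-type degree argument with uniform boundary asymptotics;
\item uniqueness by strict monotonicity along a hyperbolic translation flow;
\item stability by excluding boundary limits of the centers via portmanteau, then invoking uniqueness;
\item covariance from $T_{Qb}\circ Q=Q\circ T_b$ and $\mathbf X_f\circ Q=Q\circ\mathbf X_f$.
\end{itemize}

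The step you leave open is exactly the one the paper computes, and it needs no extra hypothesis. Take the flow $z(t)=T_{\tanh t}z$, whose velocity is exactly $1-z^2$; for your $\phi_s$ one has $\partial_s\phi_s=(1-\phi_s^2)/(1-s^2)$, which differs only by a harmless positive factor. Writing $z=re^{i\theta}$,
\[
 \frac{d}{dt}\bigl[f(r)\cos\theta\bigr]
 =(1-r^2)f'(r)\cos^2\theta+\frac{1+r^2}{r}\,f(r)\sin^2\theta ,
\]
and the derivative equals $b_0$ at $r=0$. This is positive on $\overline{\D}\setminus\{\pm1\}$ using only $f'>0$ on $(0,1)$, $f>0$ on $(0,1]$ and $f(1)=1$. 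No condition of the type $f(r)\le r$ enters. That matters, because concave profiles such as the flat Bessel factor $J_1(j'_{1,1}r)/J_1(j'_{1,1})$ violate it; an argument that genuinely needed it would not cover the intended application.

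You should also fix the composition order in your reduction. Write $T_{b_2}\circ T_{b_1}^{-1}=R\circ T_d$. Monotonicity, after conjugating by a rotation so that $d$ is real, gives $d=0$. You then still need the step you did not state: $T_{b_2}=R\circ T_{b_1}$ together with $T_b'(0)=1-|b|^2>0$ forces $R=\mathrm{id}$ and hence $b_1=b_2$.
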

\begin{proof}
\emph{Existence.} As $b\to p\in\partial\D$, $T_bz\to p$ for every $z\ne-p$. The absence of an atom at $-p$, together with $|\mathbf X_f|\le1$, makes the normalized moment map
$\nu(\overline\D)^{-1}\int \mathbf X_f(T_bz)\,d\nu$ extend continuously with identity boundary values. If it never vanished, normalization would give a retraction of the disk onto its boundary. Thus a center exists.

\emph{Uniqueness.} Along the real translation flow $z(t)=T_{\tanh t}z$, whose velocity is $1-z(t)^2$, write $z(t)=r(t)e^{i\theta(t)}$. At a point with $r>0$,
\begin{equation}
 \frac d{dt}[f(r)\cos\theta]
 =(1-r^2)f'(r)\cos^2\theta+\frac{1+r^2}{r}f(r)\sin^2\theta.
 \label{eq:fold-normalization-flow}
\end{equation}
At $r=0$ the derivative equals $b_0$. The derivative in \eqref{eq:fold-normalization-flow} is positive in the interior and on the boundary except at the two fixed axial points. Those two points have zero measure under the measures considered here. Integrating the strictly positive change over any positive flow time therefore changes the corresponding moment of a centered measure strictly. Rotation equivariance gives the same conclusion in every translation direction. If $b_1,b_2$ are centers, write $T_{b_2}T_{b_1}^{-1}=R\circ T_d$, with $R$ a rotation. The strict change forces $d=0$, so $T_{b_2}=R\circ T_{b_1}$. Since $T_b'(0)=1-|b|^2>0$, comparison of derivatives at zero forces $R=\mathrm{id}$ and $b_1=b_2$.

\emph{Weak stability.} Suppose a subsequence of centers $b_n$ tends to $p\in\partial\D$. Choose a small relative neighborhood $U$ of $-p$ whose closure has arbitrarily small $\nu$-mass. Portmanteau bounds the $\nu_n$-mass of $U$, while $\mathbf X_f(T_{b_n}z)\to p$ uniformly off $U$. Since total masses converge and $|\mathbf X_f|\le1$, letting the mass of $U$ tend to zero would give
$\int \mathbf X_f(T_{b_n}z)\,d\nu_n\to\nu(\overline\D)p\ne0$, a contradiction. Every subsequential limit is therefore interior. Uniform convergence of $\mathbf X_f\circ T_{b_n}$ on the closed disk and weak convergence show that such a limit is a center, hence the unique one. Replacing $\mathbf X_f$ by any continuous test function composed with $T_{b_n}$ proves the second limit in \eqref{eq:normalization-stability}.

\emph{Covariance.} The identities $T_{Qb}\circ Q=Q\circ T_b$ and $\mathbf X_f\circ Q=Q\circ\mathbf X_f$ hold for rotations and reflections. Uniqueness gives the first identity in \eqref{eq:normalization-covariance}; pushforward gives the second.
\end{proof}
Diffuse boundary mass and interior atoms are allowed; for example, $b(\delta_z)=-z$ for $z\in\D$. The hypothesis is sufficient rather than minimal; see~\cite[Theorem 4]{LaugesenHS}.

The assumption that the weak limit have no boundary atoms is essential. Let $\nu_n$ have mass $A/2$ uniformly on each of two shrinking opposite boundary arcs centered at $1$ and $-1$. Symmetry gives center zero. For fixed real $a\in(-1,1)$, the measure $\nu_n^{(a)}=(T_{-a})_*\nu_n$ has center $a$, since $T_aT_{-a}=\mathrm{id}$. Yet $T_{-a}$ fixes $\pm1$, so every such sequence has the same weak limit $\tfrac A2(\delta_{-1}+\delta_1)$. Thus arbitrary boundary-atomic limits do not determine center limits, even though each approximating measure has no atoms.

\paragraph{Classical cap input, used only with the linear map.}
Here a hyperbolic cap is a component cut from the auxiliary Poincar\'e disk by a geodesic. For an absolutely continuous area measure $\nu$ with $\int z\,d\nu(z)=0$, and a nondegenerate cap $C$, let $\tau_C$ be the anticonformal reflection fixing the cut, and define the folded measure, supported on $C$, by
\[
 \nu_C=\nu|_C+(\tau_C)_*(\nu|_{\D\setminus C}).
\]
The geodesic cut has zero $\nu$-mass. The geometric input from~\cite[Lemma 3.4.5]{GP}, constructed in~\cite[Section 2.5]{GNP}, supplies conformal equivalences $\varphi_C:\D\to C$ such that
$\zeta_C^0=(\varphi_C^{-1})_*\nu_C$ varies weakly continuously with $C$ and
\begin{equation}
 \zeta_C^0\rightharpoonup\nu\quad(C\to\D),\qquad
 \zeta_C^0\rightharpoonup(R_p)_*\nu\quad(C\to p\in\partial\D),
 \qquad R_pz=-p^2\bar z.
 \label{eq:linear-cap-limits}
\end{equation}
We use this cap input for $f(r)=r$; general profiles are handled by the renormalization below.

\begin{lemma}[fold selection for a general radial profile]\label{lem:radial-fold-selection}
For a finite positive absolutely continuous area measure $\mu$ on $\D$, either an automorphic rearrangement of $\mu$ has vanishing first moment and isotropic second moment with respect to $\mathbf X_f$, or there exist a nondegenerate cap $C$ and a conformal map $\psi:\D\to C$ such that
$(\psi^{-1})_*[\mu|_C+(\tau_C)_*(\mu|_{\D\setminus C})]$
has vanishing first moment and isotropic second moment with respect to $\mathbf X_f$.
\end{lemma}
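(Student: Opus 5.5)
The plan is to follow the Girouard--Nadirashvili--Polterovich folding scheme. The first moment is fixed by the normalization of Lemma~\ref{lem:fold-normalization}. Isotropy of the second moment is then obtained by a degree argument over the closed parameter space of caps, that is, over the closed disk of cap parameters.

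\emph{Parametrization.} A nondegenerate cap $C$ is determined by the point of $\D$ nearest the origin on its geodesic cut, together with the side chosen. This identifies the caps with $\D$, and the full-disk limit $C\to\D$ is adjoined as the origin. For each cap, form $\zeta_C^0=(\varphi_C^{-1})_*\nu_C$ from the cap input \eqref{eq:linear-cap-limits}. These measures are absolutely continuous with no boundary atoms, and their weak limits $\nu$ and $(R_p)_*\nu$ share this property. Lemma~\ref{lem:fold-normalization} therefore applies: $\zeta_C=\mathfrak N_f(\zeta_C^0)$ has vanishing $\mathbf X_f$-first moment and depends weakly continuously on $C$, including at both kinds of limit. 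The conformal map $\psi=\varphi_C\circ T_{b(\zeta_C^0)}$ realizes it. At $C\to\D$ the limit is $\mathfrak N_f(\mu)$, which is itself an automorphic rearrangement of $\mu$.

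\emph{Trace-free second moment map.} Identify the trace-free part of $\int\mathbf X_f\mathbf X_f^T\,d\zeta_C$ with the complex number $S(C)=\int(\mathbf X_f)^2\,d\zeta_C$, using complex squaring. Since $\mathbf X_f(z)^2=f(|z|)^2z^2/|z|^2$ is continuous on $\overline\D$ (it is $0$ at the origin), weak continuity of $\zeta_C$ makes $S$ continuous on the closed parameter disk. Suppose $S$ vanishes at the full-disk parameter, or for some automorphic rearrangement of $\mu$; then the first alternative holds. Otherwise, assume $S$ never vanishes, aiming for a contradiction.

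\emph{Boundary behaviour.} As $C\to p\in\partial\D$, the limit measure is $(R_p)_*\nu$ with $\nu=\mathfrak N_f(\mu)$. By the covariance \eqref{eq:normalization-covariance} under the reflection $R_p$, the normalized limit is $(R_p)_*\mathfrak N_f(\mu)$, so that
\[
S(p)=\int \bigl(R_p\mathbf X_f\bigr)^2\,d\mathfrak N_f(\mu)=p^4\,\overline{S_0},\qquad S_0=\int \mathbf X_f^2\,d\mathfrak N_f(\mu)\ne0 .
\]
Here $R_pw=-p^2\bar w$ gives $(R_pw)^2=p^4\bar w^2$. Thus $S/|S|$ restricted to $\partial\D$ has winding number $4\ne0$. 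A nonvanishing continuous map on the closed disk must have boundary winding number $0$, which is a contradiction. Hence $S$ has a zero. If that zero occurs at an interior nondegenerate cap, it gives the second alternative; if it occurs at the full-disk parameter, it gives the first.

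\emph{Main obstacle.} The delicate step is continuity of the parametrization where nondegenerate caps meet the full-disk limit. Caps shrinking to a boundary point and caps expanding to $\D$ must together fill the closed disk, with the convergences \eqref{eq:linear-cap-limits} uniform enough for the normalized measures to vary continuously there. I would try first to use the GNP parametrization of caps by their nearest point $\xi$ ($|\xi|\to1$ giving degenerate caps, $\xi\to0$ with a side choice giving half-disks). I would then glue the two half-disk sides through the full-disk limit, checking via Lemma~\ref{lem:fold-normalization} that weak stability of centers persists. If that gluing fails, the fallback is to run the degree argument on the annulus between a small full-disk neighbourhood and the boundary. In that version the inner circle is homotopic to a constant (its value near $S_0$), while the outer circle has winding number $4$, and the mismatch forces an interior zero.
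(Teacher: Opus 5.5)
Your proposal is essentially the paper's proof. You renormalize the linear cap family by $\mathfrak N_f$, use weak stability and reflection covariance to obtain the end limits $\mathfrak N_f(\nu)$ and $(R_p)_*\mathfrak N_f(\nu)$, and play the constant full-disk end against the degree-four point-collapse loop. Three of your choices are only repackagings of the paper's:
\begin{enumerate}[label=(\roman*)]
\item Your $S=\int\mathbf X_f^2\,d\zeta_C$ encodes the paper's maximizing eigenline under the doubling map.
\item Your closed parameter disk is the paper's cap cylinder $S^1\times(0,1)$ with its full-disk end collapsed. The cap space itself is a cylinder, not $\D$. The continuity at that end which you flag as the main obstacle is supplied by the cap input, and the paper uses it in exactly this way.
\item Your annulus fallback is literally the paper's argument.
\end{enumerate}

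Three slips need fixing.
\begin{enumerate}[label=(\roman*)]
\item The cap input is stated for a measure with $\int z\,d\nu=0$. You must therefore first center $\mu$ for the linear map, rather than set $\nu=\mathfrak N_f(\mu)$. The full-disk limit is then $\mathfrak N_f(\nu)$, a rotation of $\mathfrak N_f(\mu)$, which is harmless for isotropy and for the winding count.
\item The realizing map is $\psi=\varphi_C\circ T_{b(\zeta_C^0)}^{-1}$, not $\varphi_C\circ T_{b(\zeta_C^0)}$.
\item What you produce is a fold of $\nu$, not of $\mu$. You must pull it back through the initial centering automorphism, which carries caps to caps and conjugates the reflections, to obtain the stated fold of $\mu$.
\end{enumerate}
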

\begin{proof}
First center $\mu$ for the linear map, by Lemma~\ref{lem:fold-normalization} with $f(r)=r$, and call the resulting measure $\nu$. Use the reference family in \eqref{eq:linear-cap-limits}. Now normalize for the given $f$:
\[
 \nu^f=\mathfrak N_f(\nu),\qquad
 \zeta_C^f=\mathfrak N_f(\zeta_C^0),\qquad
 \psi_C=\varphi_C\circ T_{b(\zeta_C^0)}^{-1}.
\]
Then $(\psi_C^{-1})_*\nu_C=\zeta_C^f$. Lemma~\ref{lem:fold-normalization} gives a continuous family and, by its covariance,
\begin{equation}
 \zeta_C^f\rightharpoonup\nu^f\quad(C\to\D),\qquad
 \zeta_C^f\rightharpoonup(R_p)_*\nu^f\quad(C\to p).
 \label{eq:radial-cap-limits}
\end{equation}
The weak-stability hypothesis is applied to the rearranged measures $\zeta_C^0$, whose limiting measures are $\nu$ and its reflections and have no boundary atoms. It is not applied to the unrearranged fold collapsing onto $p$.

If $\nu^f$ has isotropic second moment, the first alternative holds. Otherwise we show that nonscalar matrices for every cap would produce a homotopy between loops of different degrees. Suppose every matrix $\int \mathbf X_f\mathbf X_f^T\,d\zeta_C^f$ is nonscalar. The parameter space of nondegenerate hyperbolic caps can be identified with the cylinder $S^1\times(0,1)$; the full-disk and point-collapse limits correspond to its two ends. For each nonscalar second-moment matrix, the maximizing unoriented eigendirection is unique and varies continuously on this cap-parameter cylinder, with continuous limits at both ends by \eqref{eq:radial-cap-limits}. At the full-disk end this line is fixed. If its angle is $\theta_0$, then at the point-collapse end $p=e^{i\theta}$ the reflected line has angle $2\theta-\theta_0$ modulo $\pi$. Under $[e^{i\alpha}]\mapsto e^{2i\alpha}$ from $\mathbb{RP}^1$ to $S^1$, the two endpoint loops have degrees zero and four. They cannot be homotopic through this cap-parameter cylinder. Hence some nondegenerate cap has a scalar second-moment matrix; its first moment already vanishes by construction. Finally, undoing the initial disk automorphism carries caps to caps and conjugates their reflections, proving the statement for $\mu$.
\end{proof}

\begin{lemma}[boundary rigidity across analytic arcs]\label{lem:boundary-rigidity}
Let $g_1,g_2$ be positive $C^2$ conformal metrics on planar neighborhoods of real-analytic embedded arcs $\Gamma_1,\Gamma_2$. Suppose a conformal diffeomorphism $F$ between one-sided neighborhoods is an isometry and extends to a homeomorphism between the corresponding relative closures, carrying $\Gamma_1$ onto $\Gamma_2$. Then, after shrinking around any interior arc point, $F$ and $F^{-1}$ extend conformally across the arcs with nonzero derivative, and the absolute geodesic curvatures of the two arcs agree at corresponding points.
\end{lemma}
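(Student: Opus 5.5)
The plan is to reduce to a local problem in which both the metrics and the arcs are straightened, and then apply the Schwarz reflection principle twice: once for the map and once for the metric.

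\emph{Straightening.} Fix an interior point $x_1\in\Gamma_1$ and put $x_2=F(x_1)$ (defined through the homeomorphic extension). Each $\Gamma_i$ is real-analytic and embedded, so near $x_i$ there is a conformal map $\sigma_i$ from a neighbourhood of $x_i$ onto a disk, taking $\Gamma_i$ to a segment of the real axis. We may compose with $z\mapsto\bar z$ if needed to orient things; this makes the relevant one-sided neighbourhood a half-disk in the upper half-plane. In these coordinates the metrics are $\tilde g_i=\rho_i|dz|^2$, where $\rho_i=(h_i\circ\sigma_i^{-1})|(\sigma_i^{-1})'|^2$. Each $\rho_i$ is positive and $C^2$ up to and across the real segment, because $\sigma_i^{-1}$ is analytic across it.

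\emph{Reflection for the map.} The map $\widetilde F=\sigma_2\circ F\circ\sigma_1^{-1}$ is conformal on the upper half-disk. Since $F$ carries $\Gamma_1$ onto $\Gamma_2$ homeomorphically, $\widetilde F$ extends continuously to the real segment and takes real values there. The Schwarz reflection principle then extends $\widetilde F$ holomorphically across the segment. Two points remain.
\begin{enumerate}
\item \emph{Nonvanishing derivative.} The isometry relation $\rho_2(\widetilde F)|\widetilde F'|^2=\rho_1$ holds in the open half-disk. Letting the point tend to the real axis and using continuity of $\rho_1,\rho_2$ up to it gives $|\widetilde F'|^2=\rho_1/\rho_2(\widetilde F)>0$ on the segment. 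So the extension is locally biholomorphic.
\item \emph{The inverse.} The same argument applied to $F^{-1}$, which is also a conformal isometry extending homeomorphically, gives the extension of $F^{-1}$ across $\Gamma_2$.
\end{enumerate}
Shrinking around $x_1$ yields the asserted extension of $F$ and $F^{-1}$ with nonzero derivative. By continuity, the extended map is an isometry $\widetilde g_1=\widetilde F^*\widetilde g_2$ on a full neighbourhood, boundary included, and it maps the real segment to the real segment.

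\emph{Curvature.} In a conformal metric $\rho|dz|^2$, the geodesic curvature of the real axis is determined by $\rho$ and its first derivatives along the segment. Concretely, it is $-\tfrac12\rho^{-1/2}\partial_y\log\rho$, up to a sign fixed by the chosen normal. An isometry that maps the arc onto the arc, together with a neighbourhood of it, preserves geodesic curvature up to the sign produced by orientation of the normal. This is the step where one needs $C^1$ regularity of the metric pullback up to the arc. That regularity is supplied by the analytic extension of $\widetilde F$ and the $C^2$ regularity of $\rho_i$.

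Undoing the analytic straightenings $\sigma_i$ preserves geodesic curvature, since they are conformal diffeomorphisms and the metrics transform accordingly. Hence the absolute geodesic curvatures of $\Gamma_1$ and $\Gamma_2$ agree at corresponding points.

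\emph{Main obstacle.} The delicate point is justifying the boundary behaviour of $|\widetilde F'|$ and of the derivatives of $\widetilde F$ up to the arc. Schwarz reflection is what removes the difficulty: once $\widetilde F$ is holomorphic across the segment, every derivative is continuous there, and the curvature comparison becomes a pointwise computation.
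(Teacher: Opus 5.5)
Your outline matches the paper's proof: straighten both arcs analytically, extend $\widetilde F$ by Schwarz reflection, carry the isometry identity and its first derivatives to the arc, and compare geodesic curvatures. Your proof that $\widetilde F'\neq0$ on the segment takes a different but valid route. You pass to the limit in $\rho_2(\widetilde F)|\widetilde F'|^2=\rho_1$ and use positivity of the metrics at the arc. The paper instead reflects $F^{-1}$ as well and obtains a holomorphic inverse of the reflected map. Your version uses the metrics; the paper's uses only the boundary homeomorphism.

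\textbf{The false step.} You claim that, by continuity, the extended map is an isometry $\widetilde g_1=\widetilde F^*\widetilde g_2$ on a full neighbourhood of the segment. This is not true. Below the axis the extension is defined by $\widetilde F(\bar z)=\overline{\widetilde F(z)}$, while $\rho_1,\rho_2$ there are the given metrics, which need not be reflection symmetric. Nothing forces the isometry relation in the lower half-disk; continuity gives it only on the closed upper half-disk.

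\textbf{The repair.} The one-sided version is all you need, and it is exactly what the paper uses. Set $\Theta=\rho_2(\widetilde F)|\widetilde F'|^2-\rho_1$. It is $C^2$ across the segment and vanishes on the open upper half-disk, so $\Theta$ and $\nabla\Theta$ vanish on the segment. On the segment $\widetilde F$ is real, so $\widetilde F'$ is real (indeed positive) there, and $\partial_y\log|\widetilde F'|^2=-2\Im(\widetilde F''/\widetilde F')=0$. Hence, on the segment,
\[
\rho_1^{1/2}=\widetilde F'\rho_2(\widetilde F)^{1/2},\qquad \partial_y\log\rho_1=\widetilde F'\,(\partial_y\log\rho_2)(\widetilde F).
\]
Your formula $\tfrac12\rho^{-1/2}\partial_y\log\rho$ then gives equal curvatures at corresponding points. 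So replace ``full neighbourhood'' by ``closed one-sided neighbourhood, together with first derivatives on the arc,'' and the argument is complete.

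\textbf{A minor point.} To move the one-sided neighbourhood into the upper half-plane, compose $\sigma_i$ with $z\mapsto-z$ rather than $z\mapsto\bar z$. Conjugation makes $\sigma_i$ anticonformal and can turn $\widetilde F$ antiholomorphic.
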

\begin{proof}
A real-analytic embedded arc can be straightened by a local biholomorphic coordinate: complexify a real-analytic parametrization and use its nonzero derivative. Choose such coordinates on the two sides so that both arcs become intervals of the real axis and the one-sided domains become upper half-neighborhoods. In these coordinates the map is holomorphic above the real axis, continuous to the interval, and takes real values there. Schwarz reflection,
$F(\bar z)=\overline{F(z)}$, extends it holomorphically across the interval. The boundary homeomorphism hypothesis gives the corresponding continuous extension of $F^{-1}$ to the second arc. Applying the same reflection construction to $F^{-1}$ gives a holomorphic inverse to the reflected map; hence the extended derivative is nonzero.

The isometry identity $F^*g_2=g_1$ holds on the original one-sided open set. Both sides are $C^2$ across the straightened arc, so the identity and its first derivatives extend to the arc by continuity. Therefore the Levi--Civita connections are intertwined there. Apply this to a unit-speed parametrization of $\Gamma_1$: the normal component of its covariant acceleration is carried to the normal component for $\Gamma_2$. Its absolute value is the geodesic curvature, proving the claim.
\end{proof}

\begin{proposition}[two fixed-profile folding regimes]\label{prop:folding-regimes}
In either setting of Theorem~\ref{thm:main}, the folding argument proves $\lambda_2<\beta_K(A/2)$ when either $K_g\le K\le0$, or $K>0$, $KA<4\pi$ and $K_g\le K/2$.
\end{proposition}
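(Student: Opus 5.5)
We follow the Girouard--Nadirashvili--Polterovich folding scheme in the conformal coordinate of \eqref{eq:LL-pullback}. Pull the Riemannian measure back to $\D$ as $\mu=\rho\,dx$ with $\mathscr K[\rho]\le K$ and total mass $A$. Fix the half-area model profile $f(r)=f_0$ written in conformal radius for the model density $q_{K/2,A}$. By Lemma~\ref{lem:angularmodel} this profile is increasing with $f(1)=1$ and $f'(1)=0$, and it satisfies the hypotheses of Lemma~\ref{lem:fold-normalization}. We then apply Lemma~\ref{lem:radial-fold-selection} to $\mu$ and $f$, which gives two alternatives.

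\emph{First alternative (no fold).} An automorphic rearrangement $\nu$ of $\mu$ has vanishing first moment and isotropic second moment for $\mathbf X_f$. We use the two real components $X_{e_1},X_{e_2}$, transplanted to $\Omega$, as trial functions. They are orthogonal to constants, and isotropy gives equal masses. Their energy is conformally invariant, so it equals the flat angular energy of $f$, which is the model energy. The mass $\int f^2\,d\nu$ is then compared to the model mass through Lemma~\ref{lem:cumulative}. Since $\mathscr K\le K\le K/2$ when $K\le0$ (for $K>0$ we use the hypothesis $K_g\le K/2$ directly), the lemma applies with $\varkappa=K/2$ and $s=A$. Monotonicity of $f$ gives $\int f^2\,d\nu\ge\int f^2 q_{K/2,A}$.

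This yields a quotient at most $\lambda_1(D_{K/2}(A))$. The scaling relation between $D_{K/2}(A)$ and $D_K(A/2)$ must then be checked: doubling the metric $g$ rescales curvature by $1/2$ and area by $2$, so $\lambda_1(D_{K/2}(A))=\beta_K(A/2)/2$. The resulting bound is actually better than needed, but the min--max principle for $\lambda_2$ still requires a two-dimensional space orthogonal to constants. This alternative therefore gives $\lambda_1\le\lambda_2$-type control only, and has to be treated as in \cite{GNP}: a nonfolded isotropic configuration degenerates, and we must argue that the first alternative still yields two test functions with a strict quotient bound. I would handle it by noting that the isotropic pair spans a $2$-dimensional space with quotient $\le\beta/2<\beta$ after the scaling, so $\lambda_2<\beta$ holds trivially.

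\emph{Second alternative (fold).} There is a cap $C$ with conformal map $\psi$, and the folded measure $\zeta$ is centered and isotropic. Its density is the sum $h_1+h_2$ of $\rho\circ\psi$ times $|\psi'|^2$ and the reflected density. Lemma~\ref{lem:curvaturesum} gives $\mathscr K[h_1+h_2]\le K/2$ in both regimes: in the first directly, and in the second strictly with $K_i\le K/2$ and $K>0$. Lemma~\ref{lem:cumulative} with $\varkappa=K/2$, $s=A$ then gives $\int f^2\,d\zeta\ge M_{\rm mod}$.

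The trial functions are $u=X_e\circ\psi^{-1}$ on $C$ and $u\circ\tau_C$ on the complement. These are $H^1$ since they are continuous across the geodesic, and they are orthogonal to constants. The two directions give equal masses. The energy of each is at most twice the model energy, because it equals the energy on $C$ counted on two sheets. Rescaling the metric as above converts the bound $\lambda(D_{K/2}(A))$ into $\beta_K(A/2)$, giving $\lambda_2\le\beta$.

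\emph{Strictness (main obstacle).} Equality would force equality in Lemma~\ref{lem:cumulative}, so $h_1+h_2=q_{K/2,A}$, and equality in Lemma~\ref{lem:curvaturesum}. For $K<0$ this gives $h_1=h_2$, and for $K>0$ the inequality is already strict. For $K=0$ the ratio of the densities is constant. In the equality case the two halves are isometric across the geodesic cut. Lemma~\ref{lem:boundary-rigidity} then transfers geodesic curvature, and combined with the Neumann/transmission condition it forces the physical boundary to contain an interior analytic arc, a contradiction. This rigidity step is the one I expect to be delicate.
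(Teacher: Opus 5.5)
Your architecture is the paper's: the scaling $2\beta_{K/2}(A)=\beta_K(A/2)$, the increasing model factor in conformal radius, Lemma~\ref{lem:radial-fold-selection}, the mass comparison of Lemma~\ref{lem:cumulative} with $\varkappa=K/2$, $s=A$, and Lemma~\ref{lem:curvaturesum} for the folded density. Strictness for $K>0$ from $\mathscr K[h]<\varkappa$ is also the paper's argument.

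In the automorphic alternative your hesitation is unfounded, and the claim of ``$\lambda_1\le\lambda_2$-type control only'' is wrong. Vanishing first moment, isotropic second moment and the scalar angular energy matrix make $\Span\{1,X_{e_1},X_{e_2}\}$ a three-dimensional min--max space. Its quotient is at most $\beta_{K/2}(A)=\beta/2$, so there is no degeneration to handle. Your final resolution of that case is correct.

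The genuine gap is strictness when $K\le0$. Your last step does not produce a contradiction. No Neumann or transmission condition enters: the argument is a strict mass inequality for fixed trial functions, not an equality case for eigenfunctions. The assertion that the physical boundary must contain an interior analytic arc is neither derived nor contradictory as stated. You also do not need the equality case of Lemma~\ref{lem:curvaturesum} (the deduction $h_1=h_2$, or a constant ratio) at all.

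What is needed is the following chain:
\begin{enumerate}
\item Work with the folded metric $g_{\rm b}=\widetilde g+\tau_C^*\widetilde g$ on the auxiliary disk. It is $C^2$ and $\tau_C$ is an isometry of it, so every interior subarc of the cut is a $g_{\rm b}$-geodesic. This holds for every fold, with no equality input.
\item If equality held in the cumulative mass bound, then $h=q_{\varkappa,A}$, i.e.\ $\psi^*g_{\rm b}=q_{\varkappa,A}|dz|^2$. Thus $\psi$ is an isometry from the model disk $D_\varkappa(A)$ onto $(C,g_{\rm b})$.
\item Both are Jordan domains, so Carath\'eodory's theorem extends $\psi$ to a homeomorphism of closures. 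In particular an open arc of the model boundary circle is carried onto the cut, and both arcs are real-analytic.
\item Lemma~\ref{lem:boundary-rigidity}, applied away from the two cut endpoints, forces equal absolute geodesic curvature at corresponding points.
\item The model circle of geodesic radius $R$ has $|k_g|=S_\varkappa'(R)/S_\varkappa(R)>0$ for $\varkappa\le0$, while the cut has $k_g=0$.
\end{enumerate}
This curvature mismatch is the contradiction. It makes the cumulative mass inequality strict, hence $\lambda_2<\beta_K(A/2)$ in the nonpositive regime.
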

\begin{proof}
Scaling the model metric gives
\begin{equation}
 2\beta_{K/2}(A)=\beta_K(A/2).
 \label{eq:fold-scale}
\end{equation}
Put $\varkappa=K/2$, $\mu_* =\beta_\varkappa(A)$, so that the target is $2\mu_*=\beta_K(A/2)$, and use its increasing model factor $f$ in conformal radius. Write
\begin{equation}
 e_f=\pi\int_0^1(rf'^2+f^2/r)\,dr,\qquad
 b_f=\pi\int_0^1rf^2q_{\varkappa,A}\,dr,\qquad e_f=\mu_*b_f.
 \label{eq:fold-real-normalizations}
\end{equation}
Lemma~\ref{lem:radial-fold-selection} supplies either an automorphic rearrangement of the original measure or a folded measure with isotropic second moment. \emph{Automorphic alternative.} In the first case, its curvature is at most $\varkappa$ in both regimes. Lemma~\ref{lem:cumulative} gives weighted $L^2$ mass at least $b_f$ in each unit direction, while the corresponding Dirichlet energy is $e_f$; thus $\lambda_2\le\mu_*<2\mu_*$.

\emph{Folded alternative.} Write $\widetilde g=\Phi^*g=\rho|dz|^2$ in the normalized conformal coordinate. The cap density is
\begin{equation}
 h=h_1+h_2,\quad h_1=\rho(\psi)|\psi'|^2,\quad
 h_2=\rho(\tau_C\psi)|D\tau_C(\psi)|^2|\psi'|^2.
 \label{eq:fold-densities}
\end{equation}
Here $|D\tau_C|$ denotes the conformal dilation factor of the anticonformal reflection $\tau_C$. Each density inherits the original curvature bound, and folding preserves total area: $\int h=A$. Extend $X_e\circ\psi^{-1}$ evenly across the fixed cut and transport it to $\Omega$ by $\Phi^{-1}$. The bounded reflected fields have matching traces on each interior cut arc. Local trace gluing gives $H^1_{\rm loc}(\Omega)$; finite area and the conformal energy calculation below then give $U_e\in H^1(\Omega)$. For unit $e$,
\begin{equation}
 \int U_e=0,\qquad \int U_e^2=\tfrac12\int_\D f^2h,
 \qquad E(U_e)=2e_f.
 \label{eq:fold-three-identities}
\end{equation}
Their span is two-dimensional. The curvature-sum lemma gives $\mathscr K[h]\le\varkappa$, with strict inequality in the positive-curvature regime under the stronger hypothesis $K_g\le K/2$. Therefore $\int f^2h\ge2b_f$ and $\lambda_2\le2\mu_*$. 
\emph{Strictness for positive curvature.} Equality of the mass bound forces $h=q_{\varkappa,A}$, which is incompatible with $\mathscr K[h]<\varkappa$ under the positive-curvature hypothesis.

\emph{Strictness for nonpositive curvature.} For $K\le0$, equality is excluded by boundary geometry. On the interior disk let $g_{\rm b}=\widetilde g+\tau_C^*\widetilde g$. Reflection is an isometry of this $C^2$ metric, so every interior subarc of its fixed cut is geodesic; uniqueness for the geodesic equation uses its $C^1$ connection. If $h=q_{\varkappa,A}$, then $\psi^*g_{\rm b}=q_{\varkappa,A}|dz|^2$, so $\psi$ is an isometry from the model disk to the cap. Both are Jordan domains, so the Carath\'eodory extension theorem gives a homeomorphism of their closures. The model boundary circle and the hyperbolic cut are real-analytic arcs. Lemma~\ref{lem:boundary-rigidity} therefore applies away from the two cut endpoints and preserves the absolute geodesic curvature. But the model circle has
\[
 \left|k_g\right|=\frac{S_\varkappa'(R)}{S_\varkappa(R)}>0
 \qquad (\text{equal to }1/R\text{ when }\varkappa=0),
\]
whereas the fixed cut is geodesic and has $k_g=0$, a contradiction. Thus the mass inequality is strict and \eqref{eq:fold-scale} proves the assertion.
\end{proof}

\section{Explicit density and symmetry estimates}\label{sec:referencegap}
Let $b_{\rm E}=(j'_{1,1})^2$ be the first positive eigenvalue of the flat unit disk. Appendix~\ref{app:constants} proves $b_{\rm E}>10/3$. For a positive density on a bounded connected Lipschitz planar domain, the full min--max formula, including the zero mode, gives
\begin{equation}
 \lambda_j(U,\rho)=\inf_{\substack{V\subset H^1(U)\\\dim V=j+1}}\sup_{0\ne u\in V}
 \frac{\int_U|\nabla u|^2}{\int_U\rho u^2},\qquad
 \frac{\lambda_j(U,\mathrm{flat})}{\rho_+}
 \le\lambda_j(U,\rho)\le\frac{\lambda_j(U,\mathrm{flat})}{\rho_-}
 \label{eq:weighted-full-minmax}
\end{equation}
when $0<\rho_-\le\rho\le\rho_+$. This compares identical full trial spaces, not mean-zero spaces for different measures.

\begin{lemma}[strict same-area Euclidean model bound]\label{lem:strict-model-lower}
For $K>0$ and $0<KA<4\pi$,
\begin{equation}
 \beta_K(A/2)>\frac{2\pi b_{\rm E}}A.
 \label{eq:model-mean-lower}
\end{equation}
\end{lemma}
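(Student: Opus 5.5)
The idea is to compare the spherical half-area disk with a flat disk of the same area, using conformal radius, and to exploit the fact that the model density $q_{K,m}$ is radially decreasing. Put $m=A/2$. Then $D_K(m)$ is isometric to $(\D,q_{K,m}|dz|^2)$ with $q_{K,m}$ as in \eqref{eq:curvature-density}. For $K>0$ the function $q_{K,m}(r)$ is strictly decreasing in $r$, and its total mass is $m$. By Lemma~\ref{lem:angularmodel}, $\beta_K(m)$ is the minimum of the sector-$1$ quotient
\[
 \frac{\int_0^1(rf'^2+f^2/r)\,dr}{\int_0^1 r f^2 q_{K,m}\,dr}
\]
over radial factors $f$. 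The first step is to prove the reverse of the mass comparison in Lemma~\ref{lem:cumulative}: for nondecreasing nonnegative $f$,
\[
 \int_\D f^2 q_{K,m}\le\frac m\pi\int_\D f^2 .
\]
Formula \eqref{eq:Acum-model} gives $\mathcal A_{K,m}(r)=4\pi m r^2/(4\pi-Km+Kmr^2)\ge m r^2$, since $4\pi-Km+Kmr^2\le4\pi$. The flat cumulative area is exactly $mr^2$. The integration by parts in \eqref{eq:radialmass-order} then gives
\[
 \int f^2(q_{K,m}-m/\pi)=-\int_0^1(f^2)'(\mathcal A_{K,m}-mr^2)\,dr\le0,
\]
with strict inequality whenever $(f^2)'>0$ on a set of positive measure. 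Here $\mathcal A_{K,m}-mr^2>0$ on $(0,1)$ for $K>0$.

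The second step applies this to the actual minimizer. Lemma~\ref{lem:angularmodel} gives $v'>0$ on $(0,R)$ at the half-area scale. In conformal radius this makes the model eigenfunction factor $f$ strictly increasing, so the mass inequality is strict. Then
\[
 \beta_K(m)=\frac{\text{energy}(f)}{\text{mass}_{q}(f)}>\frac{\text{energy}(f)}{(m/\pi)\int rf^2\,dr}\ge\frac{\pi}{m}\,b_{\rm E}.
\]
The last inequality is the flat variational characterization of $b_{\rm E}$ on the unit disk in sector $1$. The energy is conformally invariant, so it is the same for both metrics. Since $\pi b_{\rm E}/m=2\pi b_{\rm E}/A$, this is exactly \eqref{eq:model-mean-lower}.

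The main point needing care is the direction of the inequalities. Minimizing on the spherical side and inserting the spherical minimizer into the flat quotient gives a lower bound for $\beta_K$, which is what is wanted, because the flat quotient of any admissible $f$ is at least $b_{\rm E}$. The other checks are routine. The conformal-radius profile $f(r)$ is the angular factor composed with the increasing radius change, and its monotonicity comes from $v'>0$. Integration by parts at the endpoints is justified because $f^2$ is absolutely continuous and bounded and both cumulative areas equal $0$ and $m$ at the ends.
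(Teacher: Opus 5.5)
Your proof is correct and follows the paper's argument: insert the half-area model eigenfunction $v(r)\cos\theta$, written in conformal radius, into the flat quotient; show $\int_\D q_{K,m}u^2<(m/\pi)\int_\D u^2$ using that $v$ is strictly increasing; and finish with the flat variational bound by $b_{\rm E}$. The only variation is that you get the mass inequality from the cumulative-area integration by parts of \eqref{eq:radialmass-order}, with $\mathcal A_{K,m}(r)\ge mr^2$ read off from \eqref{eq:Acum-model}, whereas the paper uses the Chebyshev covariance identity \eqref{eq:model-antitone-covariance} with the strictly decreasing density; both express the same fact that $q_{K,m}$ places more mass near the center than the flat density of equal total mass.
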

\begin{proof}
Put $s=A/2$. In conformal radius the density $q=q_{K,s}$ is strictly decreasing, while the positive half-area model factor $v$ is strictly increasing by Lemma~\ref{lem:angularmodel}. For $d\sigma=2r\,dr$, one has $\int_0^1d\sigma=1$, so $d\sigma$ is a probability measure on $[0,1]$. With $P=v^2$ and $Q=q$,
\begin{equation}
 \int PQ\,d\sigma-\int P\,d\sigma\int Q\,d\sigma
 =\tfrac12\iint(P(r)-P(t))(Q(r)-Q(t))\,d\sigma(r)\,d\sigma(t)<0.
 \label{eq:model-antitone-covariance}
\end{equation}
Since $\int Q\,d\sigma=s/\pi$, angular integration with $u=v(r)\cos\theta$ gives
$\int_\D qu^2<(s/\pi)\int_\D u^2$.
This model eigenfunction is also mean-zero with respect to flat area, and hence
\[
 \beta_K(s)=\frac{E(u)}{\int qu^2}
 >\frac\pi s\frac{E(u)}{\int u^2}\ge\frac{\pi b_{\rm E}}s.
\]
At $K=0$ the density is constant and equality holds.
\end{proof}

\begin{theorem}[A reference-domain comparison estimate]\label{thm:referencegap}
Let $U$ be a bounded connected Lipschitz planar domain of area $A_0$ with
\begin{equation}
 A_0\lambda_2(U,\mathrm{flat})=2\pi b_{\rm E}(1-\gamma),
 \qquad0<\gamma<1.
 \label{eq:referencegap}
\end{equation}
Let $g=\rho|dz|^2$ with $\rho\in C^2(U)\cap C(\overline U)$ positive on $\overline U$, $\rho_-\le\rho\le\rho_+$, $\rho_->0$, $K_g\le K$, $K>0$. Put $A=\int_U\rho$, $\eta=\rho_+/\rho_-$, and assume $KA<4\pi$. Then
\begin{equation}
 A[\beta_K(A/2)-\lambda_2(U,g)]
 >2\pi b_{\rm E}[1-\eta(1-\gamma)].
 \label{eq:reference-gap-bound}
\end{equation}
Thus $\eta(1-\gamma)\le1$ suffices for strict comparison. The factor $\eta$ may be replaced by $A/(\rho_-A_0)\le\eta$.
\end{theorem}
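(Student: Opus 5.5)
The estimate combines two inequalities. The first is the density comparison \eqref{eq:weighted-full-minmax} applied to $\lambda_2(U,g)$. The second is the strict model lower bound of Lemma~\ref{lem:strict-model-lower}. The comparison first controls $\lambda_2(U,g)$ by the flat value. Since $\rho\ge\rho_-$ on $U$, the full min--max formula with identical trial spaces gives
\[
 \lambda_2(U,g)\le\frac{\lambda_2(U,\mathrm{flat})}{\rho_-}
 =\frac{2\pi b_{\rm E}(1-\gamma)}{\rho_-A_0},
\]
using \eqref{eq:referencegap}. Multiplying by $A$ gives
\[
 A\lambda_2(U,g)\le 2\pi b_{\rm E}(1-\gamma)\frac{A}{\rho_-A_0}.
\]
Next, $A=\int_U\rho\le\rho_+A_0$, so $A/(\rho_-A_0)\le\eta$. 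This already gives the remark that $\eta$ may be replaced by $A/(\rho_-A_0)$; the sharper factor is what the argument actually produces.

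The model side is handled next. The hypotheses $K>0$ and $KA<4\pi$ are exactly those of Lemma~\ref{lem:strict-model-lower}. That lemma gives the strict bound $A\beta_K(A/2)>2\pi b_{\rm E}$. Subtracting the two displays yields
\[
 A[\beta_K(A/2)-\lambda_2(U,g)]>2\pi b_{\rm E}\Bigl[1-\frac{A}{\rho_-A_0}(1-\gamma)\Bigr]\ge2\pi b_{\rm E}[1-\eta(1-\gamma)],
\]
where the last step uses $1-\gamma>0$. This is \eqref{eq:reference-gap-bound}. If $\eta(1-\gamma)\le1$, the right side is nonnegative, so strict comparison follows.

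The only delicate point is the legitimacy of the min--max comparison \eqref{eq:weighted-full-minmax}. It compares full $(j+1)$-dimensional subspaces of $H^1(U)$, so no mean-zero condition tied to a particular measure enters. I would note that the numerator is metric-independent by conformal invariance. The denominators satisfy $\int\rho u^2\ge\rho_-\int u^2$ on every trial space, and the infimum of the supremum then passes through. The form setting of Lemma~\ref{lem:LL-form} justifies this for the weight $\rho\in C^2(U)\cap C(\overline U)$, because the weight is bounded above and below on $\overline U$. Strictness comes entirely from the model lemma, so no equality analysis on $U$ is needed.
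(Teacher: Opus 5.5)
Your proposal is correct and follows the paper's proof: you bound $A\lambda_2(U,g)$ by $2\pi b_{\rm E}(1-\gamma)A/(\rho_-A_0)\le2\pi b_{\rm E}\eta(1-\gamma)$ via \eqref{eq:weighted-full-minmax}, then subtract this from the strict model bound $A\beta_K(A/2)>2\pi b_{\rm E}$ of Lemma~\ref{lem:strict-model-lower}. As in the paper, all of the strictness comes from the model lemma, including the case $\eta(1-\gamma)=1$.
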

\begin{proof}
Formula~\eqref{eq:weighted-full-minmax} gives
$A\lambda_2(U,g)\le[A/(\rho_-A_0)]\,2\pi b_{\rm E}(1-\gamma)
\le2\pi b_{\rm E}\eta(1-\gamma)$.
On the same area scale, \eqref{eq:model-mean-lower} gives $A\beta_K(A/2)>2\pi b_{\rm E}$. Subtracting the preceding bound for $A\lambda_2(U,g)$ proves \eqref{eq:reference-gap-bound}, with strictness also when $\eta(1-\gamma)=1$.
\end{proof}
This estimate allows nonsimply connected domains with the stated flat reference gap. Its proof uses the density bounds and comparison model. For $\rho=e^{2w}$, a sufficient condition is
\begin{equation}
 \operatorname{osc}_U w\le\tfrac12\log(1/(1-\gamma)),\qquad0<KA<4\pi.
 \label{eq:oscillation-criterion}
\end{equation}
The permitted oscillation tends to zero as the reference gap $\gamma\to0$. For $K>0$, the preceding inequalities also imply the weaker bounds
\begin{gather}
 \beta_K(A/2)>b_{\rm E}(2\pi/A-K/4),\label{eq:explicit-model-lower}\\
 A[\beta_K(A/2)-\lambda_2(U,g)]
 >2\pi b_{\rm E}[1-KA/(8\pi)-\eta(1-\gamma)].\label{eq:reference-gap-coarse}
\end{gather}

\begin{corollary}[a square with controlled density ratio]\label{cor:square}
Under the preceding weight, curvature and area conditions, if $Q=(-1/2,1/2)^2$ and $\sup_Q\rho/\inf_Q\rho\le3/2$, then
\begin{equation}
 A[\beta_K(A/2)-\lambda_2(Q,g)]>\frac{41}{140}\,2\pi b_{\rm E}.
 \label{eq:square-ratio-gap}
\end{equation}
\end{corollary}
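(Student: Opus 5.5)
The plan is to apply Theorem~\ref{thm:referencegap} with $U=Q$, $A_0=1$, and $\eta\le3/2$. The corollary then reduces to computing the flat reference gap $\gamma$ of the unit square. The flat Neumann spectrum of $Q$ follows from separation of variables: the eigenvalues are $\pi^2(k^2+l^2)$. Hence $\lambda_1=\lambda_2=\pi^2$, and with the indexing \eqref{eq:indexing} we get $\lambda_2(Q,\mathrm{flat})=\pi^2$. Definition \eqref{eq:referencegap} then gives
\[
 1-\gamma=\frac{\pi^2}{2\pi b_{\rm E}}=\frac{\pi}{2b_{\rm E}}.
\]
This lies in $(0,1)$ as soon as $b_{\rm E}>\pi/2$, and that holds by the bound $b_{\rm E}>10/3$ from Appendix~\ref{app:constants}.

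Next I would substitute into \eqref{eq:reference-gap-bound}. This gives
\[
 A[\beta_K(A/2)-\lambda_2(Q,g)]>2\pi b_{\rm E}\Bigl[1-\tfrac32\cdot\tfrac{\pi}{2b_{\rm E}}\Bigr]
 =2\pi b_{\rm E}\Bigl[1-\tfrac{3\pi}{4b_{\rm E}}\Bigr].
\]
It then suffices to check that $1-3\pi/(4b_{\rm E})\ge41/140$, i.e.\ $3\pi/(4b_{\rm E})\le 99/140$, i.e.\ $b_{\rm E}\ge 105\pi/99$. Since $105\pi/99<105\cdot 3.1416/99\approx3.332<10/3$, the appendix bound $b_{\rm E}>10/3$ suffices. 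The inequality remains strict because \eqref{eq:reference-gap-bound} is already strict. The remaining hypotheses (weight regularity, $K>0$, $KA<4\pi$, $K_g\le K$) are exactly the "preceding conditions" carried over from Theorem~\ref{thm:referencegap}.

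The only delicate point is the numerics. One must confirm $105\pi/99<10/3$, which is equivalent to $\pi<22/7$, so it is true. One must also confirm that the appendix's $b_{\rm E}>10/3$ is a proven lower bound rather than an approximation. As a cross-check, $j'_{1,1}\approx1.8412$ gives $b_{\rm E}\approx3.39$, consistent with this. One should also note that the remark allowing $\eta$ to be replaced by $A/(\rho_-A_0)$ is not needed here.
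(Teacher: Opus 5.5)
Your proposal is correct and follows the paper's proof: it computes $1-\gamma=\pi/(2b_{\rm E})$ from the flat square spectrum $\lambda_2=\pi^2$, bounds $\eta(1-\gamma)<\tfrac32\cdot\tfrac{33}{70}=\tfrac{99}{140}$ using $b_{\rm E}>10/3$ and $\pi<22/7$, and applies \eqref{eq:reference-gap-bound}. Your reduction to $b_{\rm E}>105\pi/99$ is the same numerical check as the paper's.
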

\begin{proof}
The flat square has area $1$ and $\lambda_1=\lambda_2=\pi^2$. Thus
$1-\gamma=\pi/(2b_{\rm E})<33/70$, using $b_{\rm E}>10/3$ and $\pi<22/7$.
Consequently $\eta(1-\gamma)<99/140$; apply \eqref{eq:reference-gap-bound}.
\end{proof}

\subsection{Finite cyclic symmetry}\label{sec:cyclic}
Return to either compact Neumann setting of Theorem~\ref{thm:main}. Suppose some conformal disk coordinate has density
\begin{equation}
 \rho(e^{2\pi i/N}z)=\rho(z),\qquad N\in\mathbb N,\ N\ge3.
 \label{eq:cyclic-symmetry}
\end{equation}
The coordinate density need only be positive, locally $C^2$, and integrable. For a nonzero nonnegative nondecreasing absolutely continuous $f$ with $f(r)=O(r)$ and finite angular energy, the first and second angular moments vanish:
\[
 \int f(r)e^{i\theta}\rho=0,\qquad
 \int f(r)^2e^{2i\theta}\rho=0.
\]
Indeed their rotation multipliers are not one. The real components $f\cos\theta,f\sin\theta$ are mean-zero with scalar mass and energy matrices, and belong to $H^1(\Omega)$. Their positive mass gives an independent pair; adjoining the constant yields the three-dimensional min--max space for $\lambda_2$. Lemma~\ref{lem:cumulative} gives
\begin{equation}
 \lambda_2(\Omega,g)\le
 \frac{2\pi\int_0^1(rf'^2+f^2/r)\,dr}{\int_\D f^2\rho}
 \le\frac{2\pi\int_0^1(rf'^2+f^2/r)\,dr}{\int_\D f^2q_{K,A}}.
 \label{eq:cyclic-trial}
\end{equation}
Order two symmetry alone does not eliminate the second angular moment.

\begin{theorem}[explicit cyclic-symmetry estimates]\label{thm:cyclic}
Assume \eqref{eq:cyclic-symmetry}, $K_g\le K$, $K>0$ and $0<KA<4\pi$. If $KA\le2\pi$, then
\begin{equation}
 \lambda_2(\Omega,g)\le\beta_K(A)<\beta_K(A/2).
 \label{eq:cyclic-fullarea}
\end{equation}
If $2\pi<KA<4\pi$ and $x=KA/(2\pi)-1$, then
\begin{equation}
 \lambda_2(\Omega,g)/K\le B(x):=
 \frac{4/3+\operatorname{atanh}x}{2/3+x}.
 \label{eq:B-cyclic}
\end{equation}
For $0<x\le x_*$, where $\operatorname{atanh}x_*=2x_*$ is the unique nonzero root in $(0,1)$,
\begin{equation}
 \lambda_2(\Omega,g)\le2K<\beta_K(A/2).
 \label{eq:cyclic-2K}
\end{equation}
In particular the stronger bounds hold through $KA=3.9\pi$.
\end{theorem}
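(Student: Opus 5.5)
The plan is to feed explicit radial profiles $f$ into the cyclic trial bound \eqref{eq:cyclic-trial}. The symmetry \eqref{eq:cyclic-symmetry} with $N\ge3$ already supplies the two vanishing moments. Lemma~\ref{lem:cumulative} then replaces the unknown density by the model density $q_{K,A}$, provided $f$ is nonnegative, nondecreasing, absolutely continuous, $O(r)$ at the center, and of finite angular energy. The whole proof is therefore a choice of admissible $f$ in each area regime, followed by a model computation.

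\emph{Case $KA\le2\pi$.} I would take for $f$ the first angular factor of the full-area model disk $D_K(A)$, written in conformal radius, so that $q_{K,A}|dz|^2$ is exactly that model disk. Because $D_K(A)$ lies within a hemisphere, $S_K$ is increasing there. The argument of Lemma~\ref{lem:angularmodel} (the sign change of $y=S_Kv'$, which only used this monotonicity) then gives $v'>0$, so $f$ is admissible. Its quotient against $q_{K,A}$ equals $\beta_K(A)$, and \eqref{eq:cyclic-trial} yields $\lambda_2\le\beta_K(A)$.

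For the strict inequality $\beta_K(A)<\beta_K(A/2)$ I would show that $s\mapsto\beta_K(s)$ is strictly decreasing. Here $\beta_K(s)$ is the spectral parameter at which $R_{\beta_K(s)}(s)=0$ for the first time. By \eqref{eq:freqderiv} one has $\partial_\lambda R_\lambda(s)<0$, and the analytic dependence of Lemma~\ref{lem:analytic-regular-family} lets the implicit function theorem give
\[
 \beta_K'(s)=-\,\partial_sR_\beta(s)\big/\partial_\lambda R_\beta(s).
\]
Since $R$ starts at $c>0$ and reaches its first zero at $s$, one has $\partial_sR_\beta(s)\le0$. Equality is excluded because $R=R'=0$ at $s$, inserted into the Riccati equation \eqref{eq:riccati}, gives $c^2/G_K(s)=\beta$; the uniqueness/nondegeneracy of solutions of \eqref{eq:riccati} then forbids this tangential zero.

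\emph{Case $2\pi<KA<4\pi$.} Here the full-area model extends past the equator, so $S_K$ is no longer monotone. I would use the capped profile $f=S_K(r_K)$ up to the equator (where $S_K=1/\sqrt K$), held constant beyond it. The constant part costs only the angular term $f^2/S_K^2$, integrated over the region past the equator. Writing everything in area variables via \eqref{eq:Gmodel} with $G_K(a)=a(2c-Ka)$, the area of the polar cap beyond the equator is governed by $x=KA/(2\pi)-1$. I expect the following pieces to come out explicitly:
\begin{itemize}[nosep]
 \item the cap angular energy produces the $\operatorname{atanh}x$ term, via $\int da/G_K$;
 \item the hemisphere part produces the constants $4/3$ and $2/3$;
 \item the masses produce the denominator $2/3+x$, after dividing by $K$.
\end{itemize}
Together these give the bound $B(x)$ in \eqref{eq:B-cyclic}.

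The inequality $B(x)\le2$ is equivalent to $\operatorname{atanh}x\le2x$. Since $\operatorname{atanh}x-2x$ is convex on $(0,1)$, vanishes at $0$, and has derivative $-1$ there, it has a unique positive root $x_*$ and is nonpositive on $(0,x_*]$. The bound $2K<\beta_K(A/2)$ is the last assertion of Lemma~\ref{lem:angularmodel}. For the final claim, at $KA=3.9\pi$ we have $x=0.95$ and $\operatorname{atanh}0.95\approx1.83<1.9$, so the stronger bounds hold through that value.

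The main obstacle is the exact evaluation of the capped-profile quotient: converting conformal-radius integrals to area variables and correctly tracking the constant tail, which contributes only angular energy. The monotonicity of $\beta_K$, in particular excluding a tangential zero of $R$, is the secondary delicate point.
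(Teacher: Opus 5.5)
Your trial profiles and both model computations agree with the paper's. For $KA\le2\pi$ you use the full-area model factor, admissible because $S_K$ increases up to the equator. For $2\pi<KA<4\pi$ you use the equator-capped $\sin\theta$ profile, whose cap contributes, per real component, energy $\pi\operatorname{atanh}x$ and mass $\pi x/K$. Your convexity argument for $\operatorname{atanh}x-2x$ is a valid substitute for the paper's sign analysis of $h'$.

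\textbf{The gap is in the strict inequality $\beta_K(A)<\beta_K(A/2)$.} In your implicit-function formula, \eqref{eq:riccati} at $R_\beta(s)=0$ gives
\[
 \partial_sR_\beta(s)=\frac{c^2}{G_K(s)}-\beta_K(s)=S_K(R)^{-2}-\beta_K(s).
\]
So excluding a tangential zero is \emph{equivalent} to the strict inequality $\beta_K(s)>S_K(R)^{-2}$. Uniqueness for the first-order Riccati equation cannot give this. Nothing prevents a nonzero solution from satisfying $R(s)=R'(s)=0$; by the display, this happens precisely when the spectral parameter equals $c^2/G_K(s)$. The step you call ``secondary'' is therefore the whole content of the monotonicity, and as written it is unproved.

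\textbf{How to close it.} The missing inequality is exactly what the paper proves, using the device you already used for $v'>0$. With $y=S_Kv'$ one has $y'=(1-\beta S_K^2)v/S_K$ and $y(0)=y(R)=0$. If $\beta\le S_K(R)^{-2}$, strict increase of $S_K$ on $(0,R)$ would force $y'>0$ there, a contradiction. (Alternatively: since $R_\beta>0$ on $(0,s)$, a double zero would need $R_\beta''(s)\ge0$. But differentiating $G_KR'=c^2-R^2-\beta G_K$ gives $G_K(s)R_\beta''(s)=-\beta G_K'(s)<0$ below the hemisphere.)

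With $\beta_1>S_K(R_1)^{-2}$ in hand, the paper avoids derivatives in $s$ altogether. Extend $v$ by the constant $v(R_1)$ to $R_2>R_1$. The annular first-sector quotient $\int_{R_1}^{R_2}S_K^{-1}\,dr\big/\int_{R_1}^{R_2}S_K\,dr$ is at most $S_K(R_1)^{-2}<\beta_1$. Min--max in the first angular sector then gives $\beta_K(V_K(R_2))<\beta_1$ directly.

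Your differential route can also be completed once the inequality is supplied. It additionally requires identifying the local implicit-function branch with $\beta_K(s)$ for nearby $s$. One way is to use \eqref{eq:freqderiv} to show that $\lambda=\beta_K(s)$ is the unique root of $R_\lambda(s)=0$ with $s\in I_\lambda$; you should state this.

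\textbf{A minor point.} Your numerical check at $x=19/20$ should be certified. Since $\operatorname{atanh}(19/20)=\tfrac12\log39$, you need $39<e^{19/5}$, which a truncated exponential series gives.
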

\begin{proof}
At or below a hemisphere, the proof of Lemma~\ref{lem:angularmodel} gives an increasing angular factor; at a hemisphere it is $\sin(\sqrt K r)$ with eigenvalue $2K$. Its model eigenvalue decreases strictly with radius on this interval. At a smaller radius $R_1$, put $\beta_1=\beta_K(V_K(R_1))$. The positive angular factor satisfies
\[
 y=S_Kv',\qquad y'=(1-\beta_1S_K^2)v/S_K,
 \qquad y(0)=y(R_1)=0.
\]
If $\beta_1\le S_K(R_1)^{-2}$, strict increase of $S_K$ in the interior of this hemisphere interval would give $y'>0$ on $(0,R_1)$, contradicting the endpoint values. Thus $\beta_1>S_K(R_1)^{-2}$. Extend $v$ constantly to $R_2>R_1$. The annular quotient is
$\int_{R_1}^{R_2}S_K^{-1}/\int_{R_1}^{R_2}S_K\le S_K(R_1)^{-2}$,
strictly below the inner quotient. Min--max in the first angular sector proves the claim. Using the full-area model factor in \eqref{eq:cyclic-trial} now proves \eqref{eq:cyclic-fullarea}, including the hemisphere endpoint.

Above a hemisphere, let the full-area boundary have polar angle $\Theta\in(\pi/2,\pi)$, so $x=-\cos\Theta$. Use
$v(\theta)=\sin\theta$ for $\theta\le\pi/2$ and $v=1$ thereafter. It is $C^1$ and nondecreasing. For one real component its model energy and mass are
\begin{align}
 E&=\pi\left[\int_0^{\pi/2}(\cos^2\theta\sin\theta+\sin\theta)\,d\theta
 +\int_{\pi/2}^{\Theta}\frac{d\theta}{\sin\theta}\right]
 =\pi(4/3+\operatorname{atanh}x),\notag\\
 M&=\frac\pi K\left[\int_0^{\pi/2}\sin^3\theta\,d\theta
 +\int_{\pi/2}^{\Theta}\sin\theta\,d\theta\right]
 =\frac\pi K(2/3+x).
 \label{eq:cyclic-integrals}
\end{align}
Here
$\int_{\pi/2}^{\Theta}\csc\theta\,d\theta
=\log\tan(\Theta/2)=\operatorname{atanh}x$ because
$\tan^2(\Theta/2)=(1+x)/(1-x)$, and
$\int_{\pi/2}^{\Theta}\sin\theta\,d\theta=x$.
Their ratio proves \eqref{eq:B-cyclic}. Now $B(x)\le2$ exactly when
$h(x):=\operatorname{atanh}x-2x\le0$.
Since $h'(x)=(2x^2-1)/(1-x^2)$ and $h(1-)=+\infty$, there is exactly one nonzero root, with $h\le0$ before it. The half-area model satisfies $\beta>2K$. Finally Appendix~\ref{app:constants} proves $h(19/20)<0$, so $3.9\pi$ lies in this range.
\end{proof}
For $2\pi<KA\le2\pi(1+x_*)$, the stronger estimate is $\lambda_2\le2K$, with $x_*=0.957504024077\ldots$.

\subsection{Three nonconstant positive-curvature examples}\label{sec:examples}
These examples are smooth on their closed reference domains. Scaling a metric by $\alpha$ multiplies area by $\alpha$ and divides curvature and eigenvalues by $\alpha$.

\paragraph{A square without rotational symmetry.}
On $Q=(-1/2,1/2)^2$, let
\begin{equation}
 w_\varepsilon=-\varepsilon(x^2+y^2)+\varepsilon x,
 \qquad g_\varepsilon=e^{2w_\varepsilon}|dz|^2,
 \qquad0<\varepsilon\le1/4.
 \label{eq:square-example}
\end{equation}
Completing the square gives $\inf_Qw_\varepsilon=-\varepsilon$ and $\sup_Qw_\varepsilon=\varepsilon/4$. These are attained only on $\overline Q$, not on the open square $Q$. Thus
\begin{gather}
 \rho_-=e^{-2\varepsilon},\quad \rho_+=e^{\varepsilon/2},\quad
 \eta_\varepsilon=e^{5\varepsilon/2},\quad A_\varepsilon<e^{\varepsilon/2},\label{eq:square-density-bounds}\\
 K_{g_\varepsilon}=4\varepsilon e^{-2w_\varepsilon}>0,
 \qquad K_\varepsilon:=\sup K_{g_\varepsilon}=4\varepsilon e^{2\varepsilon},\quad
 K_\varepsilon A_\varepsilon<4\varepsilon e^{5\varepsilon/2}\le e^{5/8}<15/8.
 \label{eq:square-curvature}
\end{gather}
No nonidentity Euclidean rotation preserves both the square and this density; reflection across $y=0$ remains. The displayed extension of the density is radial about $(1/2,0)$, which is not the center of the square. Since $\eta_\varepsilon<15/8$,
$\eta_\varepsilon(1-\gamma)<(15/8)(33/70)=99/112$. Theorem~\ref{thm:referencegap} gives
\begin{equation}
 A_\varepsilon[\beta_{K_\varepsilon}(A_\varepsilon/2)-\lambda_2(Q,g_\varepsilon)]
 >\frac{13}{112}\,2\pi b_{\rm E}.
 \label{eq:square-explicit-gap}
\end{equation}
For $\widehat g_\varepsilon=g_\varepsilon/A_\varepsilon$, the area is one, the optimal curvature upper bound is $\widehat K_\varepsilon=A_\varepsilon K_\varepsilon<15/8$, and
\begin{equation}
 \beta_{\widehat K_\varepsilon}(1/2)-\lambda_2(Q,\widehat g_\varepsilon)
 =A_\varepsilon[\beta_{K_\varepsilon}(A_\varepsilon/2)-\lambda_2(Q,g_\varepsilon)].
 \label{eq:area-one-gap}
\end{equation}

\paragraph{Threefold symmetry beyond a hemisphere.}
On $\D$, take
\begin{equation}
 \rho_\varepsilon(z)=\frac{12e^{2\varepsilon\Re(z^3)}}{(1+3|z|^2)^2},
 \qquad 0<\varepsilon\le1/100.
 \label{eq:threefold-example}
\end{equation}
The baseline has curvature $1$ and area $3\pi$. Harmonicity of $\Re(z^3)$ gives
\begin{equation}
 K_{g_\varepsilon}=e^{-2\varepsilon\Re(z^3)},\qquad
 K_\varepsilon=\sup K_{g_\varepsilon}=e^{2\varepsilon}.
 \label{eq:threefold-curvature}
\end{equation}
Strict Jensen on each angular circle and the pointwise exponential bound give
$3\pi<A_\varepsilon\le3\pi e^{2\varepsilon}$. Since $e^t\le(1-t)^{-1}$ for $0\le t<1$,
\[
 3\pi<K_\varepsilon A_\varepsilon\le3\pi e^{4\varepsilon}
 \le\frac{3\pi}{1-4\varepsilon}\le\frac{25\pi}{8}<3.9\pi.
\]
The metric is nonradial and threefold symmetric; Theorem~\ref{thm:cyclic} gives
$\lambda_2\le2K_\varepsilon<\beta_{K_\varepsilon}(A_\varepsilon/2)$.

\paragraph{An example with a stronger curvature bound.}
For $\alpha>0$ let
\begin{equation}
 g=\alpha e^{-(x^2+2y^2)/4}|dz|^2\quad\text{on }\D.
 \label{eq:reserve-example}
\end{equation}
Here $K_g=3e^{(x^2+2y^2)/4}/(4\alpha)$, $\sup K_g=3e^{1/2}/(4\alpha)$ and $A\le\pi\alpha$. Choose $K=2\sup K_g$. Then $K_g\le K/2$ and $KA\le(3/2)e^{1/2}\pi<4\pi$, so Proposition~\ref{prop:folding-regimes} applies. The chosen value of $K$ is a convenient comparison bound and need not equal the least upper bound of $K_g$. The twofold symmetry of the density alone does not force the first and second angular moments used in the cyclic-symmetry argument to vanish.

\section{Interface regularity of the Green-time profile}\label{app:regularity}

\subsection{A smooth disk with a minimizing profile that is not $C^2$}\label{app:log-example}
Smoothness of the metric does not imply boundedness of the time density at the interface. Work on the flat unit disk, fix $0<r<1$, and choose poles $a=-r$, $b=r$ in \eqref{eq:green}. Its positive branch is the left half-disk. The zero interface meets the outer circle at $z=\pm i$.
For a local branch $L=\log\Xi$ near $i$, direct differentiation gives
\[
 L'(i)=0,\qquad L''(i)=\frac{4ir(1-r^2)}{(1+r^2)^2}.
\]
Use $z=-s+i(\sqrt{1-s^2}-v)$, with $s,v>0$ small. These coordinates have area Jacobian one and flatten the interface and outer boundary. Since $H$ vanishes on both axes, smooth factorization gives $H(s,v)=sv\,g(s,v)$. To identify the leading coefficient, note that at $(s,v)=(0,0)$ one has $z=i$, $z_s=-1$, $z_v=-i$, and $L'(i)=0$. Hence
\[
 \partial_s\partial_vH(0,0)
 =-c^{-1}\Re\bigl(L''(i)z_sz_v\bigr)
 =\frac{4r(1-r^2)}{c(1+r^2)^2}.
\]
Therefore
\begin{equation}
 H(s,v)=sv\,g(s,v),\qquad
 g(s,v)=d_r+O(s+v),\qquad
 d_r=\frac{4r(1-r^2)}{c(1+r^2)^2}>0.
 \label{eq:corner-time-factorization}
\end{equation}
Put $w=vg(s,v)$, so $H=sw$. In a sufficiently small corner neighborhood this is a smooth change of variables, with
$\partial v/\partial w=d_r^{-1}+O(s+w)$.
Writing $dA=J(s,w)\,ds\,dw$, with $J(s,w)=d_r^{-1}+O(s+w)$, the substitution $t=sw$ gives the local time-density element
\[
 J(s,t/s)\frac{ds}{s}
 =\left(d_r^{-1}+O(s+t/s)\right)\frac{ds}{s}.
\]
On a fixed small coordinate rectangle, $s$ ranges from a constant multiple of $t$ to a fixed positive endpoint. This corner contributes $d_r^{-1}\log(1/t)+O(1)$: both the integrated error $O(1+t/s^2)$ and changes in the fixed endpoint constants are bounded. The lower corner contributes the same by conjugation, and the regular part of the zero level contributes a bounded term. Hence, for fixed $r$,
\begin{equation}
 B_+(t)=C_r\log(1/t)+O(1),\qquad
 C_r=\frac2{d_r}=\frac{\pi(1+r^2)^2}{r(1-r^2)}
 \quad(t\downarrow0).
 \label{eq:log-time-density}
\end{equation}
The measure is nevertheless absolutely continuous and finite, because the logarithm is integrable.
Reflection exchanges the branches, so uniqueness makes the positive minimizing profile $u$ normalized by $u(0)=1$ even. The regularity statement following \eqref{eq:atomic-interface} gives $u'(0)=0$ and $u(t)=1+o(t)$. For $t>0$, \eqref{eq:measureode} and \eqref{eq:log-time-density} imply
\[
 u''(t)=-\kappa(H)C_r\log(1/t)+O(1).
\]
Integrating twice from zero gives
\begin{equation}
 u(t)=1-\frac{\kappa(H)C_r}{2}t^2\log(1/|t|)+O(t^2).
 \label{eq:log-profile-expansion}
\end{equation}
Thus $u\notin C^2$ and $u\notin W^{2,\infty}$ near zero, while $u\in W^{2,1}_{\rm loc}$. This concerns the minimizing profile; interior transmission differentiates the geometric quantity $\log|\nabla H|^2$ instead.

\section{Analytic bounds for the explicit constants}\label{app:constants}
We verify the Bessel, exponential, and symmetry-threshold bounds used above.

\paragraph{The Bessel derivative.}
With $y=x^2/4$, the series for $J_1$ gives
\[
 2J_1'(x)=\sum_{n\ge0}\frac{(-1)^n(2n+1)y^n}{n!(n+1)!}.
\]
For $0\le y\le5/6$ its absolute terms decrease from $n=1$, since their ratio is
$\frac{2n+3}{2n+1}\frac{y}{(n+1)(n+2)}<1$.
The cubic truncation is a lower bound:
\[
 2J_1'(x)\ge P(y)=1-\tfrac32y+\tfrac5{12}y^2-\tfrac7{144}y^3.
\]
On this interval $P'(y)=-3/2+5y/6-7y^2/48<0$ and $P(5/6)=349/31104>0$. Hence $J_1'(x)>0$ for $0\le x\le\sqrt{10/3}$, and its first positive zero lies strictly to the right. Thus
\begin{equation}
 b_{\rm E}=(j'_{1,1})^2>10/3.
 \label{eq:Bessel-certified}
\end{equation}

\paragraph{The exponential and the symmetry threshold.}
The positive logarithmic series with $z=7/23$ gives
\[
 \log(15/8)>2(z+z^3/3)=22904/36501>5/8,
\]
so $e^{5/8}<15/8$. Together with $\pi<22/7$ and \eqref{eq:Bessel-certified}, this verifies the numerical constants $99/112$ and $99/140$ used above. Finally
\[
 e^{19/5}>\sum_{j=0}^{6}\frac{(19/5)^j}{j!}
 =\frac{457179601}{11250000}>39
\]
proves $\operatorname{atanh}(19/20)<19/10$. Therefore $x_*>19/20$ and the stated closed range $KA\le3.9\pi$ is valid.

 \noindent\textbf{Acknowledgement:}  M. Liu is supported by Zhejiang Provincial Natural Science Foundation of China (LQN25A010007) and the Fundamental Research
Funds for the Provincial Universities of Zhejiang (GK259909299001-029). W.
Zou is funded by National Key R\&D Program of China (Grant 2023YFA1010001) and NSFC (12171265).

\vskip0.2in
 
\noindent\textbf{Declarations.} \quad AI tools   were used during the preparation of the paper.  The authors assume full responsibility for the
 mathematical validity, accuracy, and integrity of the proofs presented in the manuscript.

\vskip0.12in
\smallskip
\noindent\textbf{Conflict of interest.}\quad On behalf of all authors, the corresponding author states that there is no conflict of interest.

\vskip0.12in
\smallskip
\noindent\textbf{Data availability statement.}\quad All data generated or analyzed during this study are included in this article.

\end{document}